
\documentclass[12pt, reqno]{amsart}

\usepackage{amsmath}
\usepackage{amssymb,amsfonts,amscd}
\usepackage{latexsym}
\usepackage{mathrsfs}
\usepackage{color}

\usepackage{hyperref}

\newcommand\al{\alpha}
\newcommand\be{\beta}
\newcommand\ga{\gamma}

\newcommand\de{\delta}

\newcommand\om{\omega}
\newcommand\Om{\Omega}
\newcommand\la{\lambda}
\newcommand\La{{\Lambda}}
\newcommand{\eps}{\varepsilon}
\newcommand\si{\sigma}

\newcommand\R{\mathbb R}
\newcommand\C{\mathbb C}
\newcommand\Z{\mathbb Z}
\newcommand\Y{\mathbb Y}
\newcommand\E{\mathbb E}
\newcommand\D{\mathbb D}

\newcommand\MM{\mathbb M}
\newcommand\N{\mathbb N}
\newcommand\NN{\overline{\N}}
\newcommand\Ninfty{\NN{}^{\,\infty}}

\newcommand\sgn{\operatorname{sgn}}

\newcommand\Prob{\operatorname{Prob}}

\newcommand\Dom{\operatorname{Dom}}
\newcommand\Ran{\operatorname{Ran}}
\newcommand\Sym{\operatorname{Sym}}

\newcommand\Conf{\operatorname{Conf}}

\newcommand\B{\mathscr B}

\newcommand\abcd{{a,b; c,d}}
\newcommand\alde{{\al,\be;\ga,\de}}

\newcommand\s{\mathfrak s}

\newcommand\sms{\smallskip}

\newcommand\wt{\widetilde}
\newcommand\ov{\overline}

\newcommand\CT{\operatorname{CT}}
\newcommand\Anti{\mathfrak A}
\newcommand\x{\mathbf x}
\renewcommand\D{\mathcal D}
\newcommand\F{\mathcal F}

\newcommand\Iqab{{I_{q;a,b}}}

\newcommand\ccdot{\,\cdot\,}

\newcommand\m{v}

\newtheorem{theorem}{Theorem}[section]
\newtheorem{proposition}[theorem]{Proposition}
\newtheorem{lemma}[theorem]{Lemma}

\newtheorem{corollary}[theorem]{Corollary}

\theoremstyle{definition}
\newtheorem{definition}[theorem]{Definition}
\newtheorem{remark}[theorem]{Remark}

\numberwithin{equation}{section}

\evensidemargin0cm \oddsidemargin0cm \textwidth16.5cm

\begin{document}

\title[]{Infinite-dimensional $q$-Jacobi Markov processes}

\author{Grigori Olshanski}

\dedicatory{To the memory of Anatoly Moiseevich Vershik}

\date{}

\thanks{Supported by the Russian Science Foundation under project 23-11-00150}

\begin{abstract}
The classical Jacobi polynomials on the interval $[-1,1]$ are eigenfunctions of a second order differential operator. It is well known that this operator generates a diffusion process on $[-1,1]$. Further, this fact admits an extension to $N$ dimensions (Demni (2010), Remling-R\"osler (2011)) leading to a $3$-parameter family of diffusion processes $X_N$ on the space of $N$-particle configurations in $[-1,1]$. The generators of the processes $X_N$ are related to Heckman-Opdam's  Jacobi polynomials attached to the root system $BC_N$.  

The first result of the paper shows that the processes $X_N$ have a $q$-analog, the $N$-dimensional $q$-Jacobi processes. These are Feller Markov processes related to the $N$-variate symmetric big $q$-Jacobi polynomials. The later polynomials were introduced and studied by Stokman (1997) and Stokman-Koornwinder (1997); they depend on two Macdonald parameters $(q,t)$ and $4$ extra continuous parameters. 

The $N$-dimensional $q$-Jacobi processes are still defined on a space of $N$-particle configurations, only now the particles live not on $[-1,1]$ but on certain one-dimensional $q$-grids. 

The second result (the main one) asserts that the $N$-dimensional $q$-Jacobi processes survive a limit transition as $N$ goes to infinity and two of the extra parameters vary together with $N$ in a certain way. In the limit, one obtains a family of Feller Markov processes which are infinite-dimensional in the sense that they live on configurations with infinitely many particles.  The proof uses a lifting of the multivariate big $q$-Jacobi polynomials to the algebra of symmetric functions --- a construction that does not hold for the Heckman-Opdam's Jacobi polynomials.  

Note also that the large-$N$ limit transition is carried out without any space scaling, which would be impossible in the continuous case.  
\end{abstract}

\maketitle


\section{Introduction}

\subsection{Preliminaries}\label{sect1.1}

Introduce some necessary notation. Let $\Sym$ denote the algebra of symmetric functions over the base field $\R$. By a \emph{configuration} on $\R^*:=\R\setminus\{0\}$ we mean a finite or countable collection of points $X\subset \R^*$ such that the sum $\sum_{x\in X}|x|$ is finite. Let $\Conf(\R^*)$ denote the set of all such $X$'s. The elements $f\in\Sym$ can be evaluated at each  $X\in\Conf(\R^*)$, so that $\Sym$ can be regarded as an algebra of functions $f(X)$ on $\Conf(\R^*)$. 

The algebra $\Sym$ is graded. We fix two numbers $q$ and $t$ contained in the open interval $(0,1)$ and consider the homogeneous basis  in $\Sym$ formed by the Macdonald symmetric functions with the parameters $(q,t)$ (see \cite[ch. VI]{Mac-1995}). We denote them by $P_\la(-;q,t)$; here the index $\la$ ranges over the set of partitions (= Young diagrams), denoted by $\Y$. 

Next, we consider another basis in $\Sym$, which is inhomogeneous. It is formed by the so-called \emph{big $q$-Jacobi symmetric functions}, denoted by $\Phi_\la(-;q,t;\al,\be;\ga,\de)$. Here $\la$ ranges over $\Y$ as before, and $(\al,\be;\ga,\de)$ is a quadruple of extra parameters such that $\be<0<\al$ and $\ga=\bar\de\in\C\setminus\R$. The top degree homogeneous component of $\Phi_\la(-;q,t;\al,\be;\ga,\de)$ is $P_\la(-;q,t)$.

The basis $\{\Phi_\la(-;q,t;\al,\be;\ga,\de):\la\in\Y\}$ was defined in \cite[sect. 7]{Ols-2021b}. In the special case $q=t$ (when the Macdonald symmetric functions turn into the Schur symmetric functions), the definition simplifies; it was given earlier in \cite{Ols-2017}. 

We list a few key properties of the big $q$-Jacobi symmetric functions established in \cite{Ols-2017} and \cite{Ols-2021b}.

(i) There exists a linear functional
$$
\E^{q,t;\alde}: \Sym\to\R
$$
such that the symmetric bilinear form on $\Sym$ given by 
$$
(f,g):=\E^{q,t;\alde}(fg), \quad f,g\in\Sym,
$$
is a positive definite scalar product and the elements $\Phi_\la(-;q,t;\al,\be;\ga,\de)$ are orthogonal with respect to it.

(ii) Further, one can exhibit a subset $\wt\Om(q,t;\al,\be)\subset\Conf(\R^*)$ with the following properties:

\begin{itemize}

\item[(ii$_1$)] $\wt\Om(q,t;\al,\be)$ is a compact, totally disconnected space with respect to a natural topology;

\item[(ii$_2$)]  for any $f\in\Sym$, the corresponding function $f(X)$ is continuous on $\wt\Om(q,t;\al,\be)$, so that we may treat $\Sym$ as a subalgebra of $C(\wt\Om(q,t;\al,\be))$, the real Banach algebra of continuous functions on $\wt\Om(q,t;\al,\be)$; note also that $\Sym$ is dense in  $C(\wt\Om(q,t;\al,\be))$;

\item[(ii$_3$)]  $\wt\Om(q,t;\al,\be)$ is endowed with a probability measure $\MM^{q,t;\alde}$ such that the functional $\E^{q,t;\alde}$ is given by integration against $\MM^{q,t;\alde}$.

\end{itemize}

Note that (i) and (ii$_3$) are similar to fundamental properties of orthogonal polynomials, with $\MM^{q,t;\alde}$ playing the role of the orthogonality measure. 

The symmetric functions $\Phi_\la(-;q,t;\al,\be;\ga,\de)$ are an infinite-variate analog of the $N$-variate symmetric big $q$-Jacobi polynomials studied by Stokman and Koornwinder \cite{S}, \cite{SK}. Those polynomials are in turn a generalization of the classical univariate  big $q$-Jacobi polynomials of Andrews-Askey \cite{AA}. Infinite-variate analogs also exist for some other systems of $q$-hypergeometric orthogonal polynomials, see \cite{CO}. 

\subsection{The main result}\label{sect1.2}
As above, we fix a $6$-tuple of $(q,t;\alde)$ of parameters such that $q,t\in(0,1)$, $\be<0<\al$, and $\ga=\bar\de\in\C\setminus\R$. We introduce a linear operator $\D_\infty$ on the space $\Sym$, which has a diagonal form in the basis $\{\Phi_\la(-;q,t;\al,\be;\ga,\de):\la\in\Y\}$:
\begin{equation}
\D_\infty \Phi_\la(-;q,t;\al,\be;\ga,\de) = \m_\la(q,t;\alde) \Phi_\la(-;q,t;\alde),
\end{equation}\label{eq1.A}
where the eigenvalues $\m_\la(q,t;\alde)$ are given by the following formula (the sum below is actually finite):
\begin{equation}
\m_\la(q,t;\alde):=-\, \sum_{i\ge1}\left(\frac{\ga\de}{\al\be} t^{1-i}(q^{\la_i}-1)+t^{\la_i-1}(q^{-\la_i}-1)\right).
\end{equation}

The symmetric functions $\Phi_\la(-;q,t;\al,\be;\ga,\de)$ and the operator $\D_\infty$ are obtained, respectively, from the $N$-variate big $q$-Jacobi polynomials and the related second order partial $q$-difference operators $D_N$, as the result of a large-$N$ limit transition in which two of the parameters varies as $N\to\infty$.

Note that 
$$
\Phi_\varnothing(-;q,t;\al,\be;\ga,\de)=1, \qquad \m_\varnothing(q,t;\alde)=0,
$$
where $\varnothing$ denotes the null partition (= empty Young diagram). Therefore, $\D_\infty$ annihilates the constant functions. 
 
Since we can regard $\Sym$ as a dense subspace of the real Banach space $C(\wt\Om(q,t;\al,\be))$, we may treat $\D_\infty$ as a densely defined operator on that space. 

\begin{theorem}[see Theorem \ref{thm7.A}]\label{thm1.A}
The closure of\/ $\D_\infty$ is the generator of a strongly continuous contraction semigroup $\{T^{q,t;\alde}_\infty(s): s\ge0\}$ on $C(\wt\Om(q,t;\al,\be))$, which preserves the cone of nonnegative functions and hence gives rise to a conservative Markov process on the space $\wt\Om(q,t;\al,\be)$. The measure $\MM^{q,t;\alde}$ is a unique stationary measure of that process. 
\end{theorem}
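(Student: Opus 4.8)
The plan is to build the semigroup spectrally on the dense subalgebra $\Sym\subset C(\wt\Om(q,t;\al,\be))$ and then to transfer the Feller property from the finite-$N$ processes. One defines $T^{q,t;\alde}_\infty(s)$ on $\Sym$ as the diagonal operator $\Phi_\la\mapsto e^{s\m_\la}\Phi_\la$ in the basis $\{\Phi_\la(-;q,t;\al,\be;\ga,\de)\}$, abbreviating $\m_\la:=\m_\la(q,t;\alde)$. The crucial elementary observation is a sign estimate: since $q,t\in(0,1)$ we have $q^{\la_i}-1\le0\le q^{-\la_i}-1$ and $t^{1-i},t^{\la_i-1}>0$, while $\al>0>\be$ and $\ga\de=|\de|^2>0$ force $\tfrac{\ga\de}{\al\be}<0$; hence every summand in $\m_\la$ is nonnegative, so $\m_\la\le0$, with equality only for $\la=\varnothing$. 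Consequently $e^{s\m_\la}\in(0,1]$, the operator satisfies $\D_\infty\one=0$, and for any $r>0$ the map $r-\D_\infty$ sends $\Phi_\mu\mapsto(r-\m_\mu)\Phi_\mu$ with $r-\m_\mu\ge r>0$, so it maps $\Sym$ bijectively onto itself; thus $\Ran(r-\D_\infty)=\Sym$ is dense, which is the range condition of Hille--Yosida.

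The heart of the matter is to show that each $T_\infty(s)$ extends to a \emph{positivity-preserving} sup-norm contraction on $C(\wt\Om(q,t;\al,\be))$, since the diagonal description controls directly only the $L^2(\MM^{q,t;\alde})$-geometry. A direct route through the Hille--Yosida--Ray theorem would require verifying the positive maximum principle for $\D_\infty$, which is opaque given its purely spectral definition; this is exactly the step I expect to be the main obstacle, and the reason to argue by approximation, since positivity is easier to transfer through a semigroup limit than to check on the generator. Here I would invoke the $N$-variate $q$-Jacobi processes from the first part of the paper: for each $N$ the operator $D_N$ with parameters $\qq$ generates a Feller semigroup $T_N(s)$ on $C(\wt\Om_N)$ that is positive, conservative and contractive, with the big $q$-Jacobi polynomials $\Phi_\la^{(N)}$ as eigenfunctions and eigenvalues $\m_\la(\qq)$. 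Using the lifting of the multivariate polynomials to $\Sym$ and the stochastic links $\LaN$ between consecutive levels, one verifies that $\m_\la(\qq)\to\m_\la$ and that the finite-level dynamics are intertwined along the links, with $\wt\Om(q,t;\al,\be)$ as the boundary of the projective system. A Trotter--Kurtz type convergence theorem for Feller semigroups on this varying family of spaces then produces a strongly continuous contraction semigroup $T_\infty(s)$ on $C(\wt\Om(q,t;\al,\be))$; positivity, contractivity and the identities $T_N(s)\one=\one$ all survive the passage to the limit, so $T_\infty(s)$ is positive and conservative.

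With the semigroup in hand I would identify its generator and extract the process. On $\Sym$ the limit semigroup acts by $\Phi_\la\mapsto e^{s\m_\la}\Phi_\la$, so differentiating at $s=0$ gives $\m_\la\Phi_\la=\D_\infty\Phi_\la$; thus the generator restricts to $\D_\infty$ on $\Sym$. Because $\Sym$ is dense and $(r-\D_\infty)\Sym=\Sym$ is dense for some $r>0$, the standard core criterion shows $\Sym$ is a core, whence the generator is exactly the closure $\overline{\D_\infty}$. A positive, conservative, strongly continuous contraction semigroup on $C(K)$ with $K$ compact is a Feller semigroup, and the Riesz--Markov theorem turns its operators into Markov transition kernels, giving the conservative Markov process on $\wt\Om(q,t;\al,\be)$.

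Finally, stationarity and uniqueness of $\MM^{q,t;\alde}$ follow from the spectral picture together with orthogonality. Since integration against $\MM^{q,t;\alde}$ is the functional $\E^{q,t;\alde}$, orthogonality yields $\E^{q,t;\alde}(\Phi_\la)=(\Phi_\la,\Phi_\varnothing)=\de_{\la\varnothing}$, so $\int\D_\infty\Phi_\la\,d\MM^{q,t;\alde}=\m_\la\,\E^{q,t;\alde}(\Phi_\la)=0$ for every $\la$ (using $\m_\varnothing=0$ in the single case $\la=\varnothing$), and therefore $\MM^{q,t;\alde}$ is invariant. Conversely, any invariant probability measure $\nu$ satisfies $\int\D_\infty f\,d\nu=0$ for $f$ in the core $\Sym$, hence $\m_\la\,\nu(\Phi_\la)=0$; as $\m_\la\ne0$ whenever $\la\ne\varnothing$, this forces $\nu(\Phi_\la)=\de_{\la\varnothing}=\MM^{q,t;\alde}(\Phi_\la)$, and the density of $\{\Phi_\la\}$ in $C(\wt\Om(q,t;\al,\be))$ gives $\nu=\MM^{q,t;\alde}$.
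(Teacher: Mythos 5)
Your proposal is correct and follows essentially the same route as the paper: define the semigroup diagonally on $\Sym$ via the eigenvalues $\m_\la\le0$, transfer positivity and conservativity from the finite-$N$ Feller semigroups of Theorem \ref{thm5.A} through the intertwining with the stochastic links and the identification of $\wt\Om(q,t;\al,\be)$ as the boundary of the chain, identify $\ov{\D_\infty}$ as the generator by the core criterion, and settle stationarity/uniqueness by orthogonality. The only cosmetic difference is that where you invoke a ``Trotter--Kurtz type'' limit, the paper uses the intertwining-kernel formalism of \cite[Proposition 2.4]{BO-2012} (the relation $T_\infty(s)\circ\wt\La^\infty_N=\wt\La^\infty_N\circ T_N(s)$ together with Proposition \ref{prop7.C}), which yields the limiting Feller semigroup directly; the approximation statement is then a consequence rather than the construction.
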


This result provides a model of Markov dynamics for an infinite particle system with long-range interaction. A number of models of this kind arise, for instance, in random matrix theory, and they have been intensively studied.  However, unlike many works on infinite-dimensional Markov dynamics, in the present paper neither Dirichlet forms nor stochastic equations are used. Our approach is essentially algebraic and relies on the method of intertwining Markov kernels proposed in \cite{BO-2012} (this method was also used in  \cite{BO-2013} and in the papers by Assiotis \cite{Assiotis}, Borodin-Gorin \cite{BG},  and Cuenca \cite{Cuenca}). Note also that since our limit process lives on a totally disconnected space, the trajectories cannot be continuous, so that the process in not a diffusion.

\subsection{Organization of the paper}

The short section \ref{sect2} contains a list of well-known facts about Feller semigroups that we will need later (generators, pre-generators, the positive maximum principle). In general, Feller semigroups are defined on real Banach spaces of the form $C(E)$, where $E$ is a locally compact space, but we need only compact spaces $E$, which simplifies the picture. 

In section \ref{sect3} we prove the existence of a Feller semigroup whose pre-generator is a second order ordinary $q$-difference operator $D$ related to the univariate big $q$-Jacobi polynomials (Theorem \ref{thm3.A}). Here the compact space $E$ is a $q$-analog of the interval $[-1,1]$: a bounded subset $\wt I_{q;a,b}\subset\R$ formed by $0$ and two geometric progressions with ratio $q$ that converge to $0$ from the both sides:
\begin{equation}
\wt I_{q;a,b}:=\{b^{-1}q,\, b^{-1}q^2,\, b^{-1}q^3,\, \dots, 0,\dots,\, a^{-1}q^3,\, a^{-1}q^2,\, a^{-1}q\},
\end{equation}
where $b<0<a$ are parameters. The operator $D$ depends on $q,a,b$ and two additional parameters $c,d$ such that $c=\bar d\in\C\setminus\R$. We show that $D$ satisfies the positive maximum principle, which readily implies that its closure generates a Feller semigroup on $C(\wt I_{q;a,b})$. The proof in this case is easy; it serves us as an introduction to the more elaborate computation given in section \ref{sect5}. 

In section \ref{sect4} we introduce compact spaces $\wt\Om_N(q,t;a,b)$ and $\wt\Om(q,t;a,b)$. These are, respectively, $N$-dimensional and infinite-dimensional analogs of the $q$-grid $\wt I_{q;a,b}$. Here the additional parameter $t\in(0,1)$ cames into play. In the special case $q=t$, the elements of these spaces can be viewed as particle configurations on $\wt I_{q;a,b}$, but for $q\ne t$ the definition is a bit more sophisticated. The algebra $\Sym$ is realized as a dense subalgebra of $C(\wt\Om(q,t;a,b))$. Likewise, the algebra $\Sym(N)$ of $N$-variate symmetric polynomials is embedded into $C(\wt\Om_N(q,t;a,b))$. 

In section \ref{sect5} we are working with an operator $D_N$ on $\Sym(N)$ which is related to $N$-variate symmetric big $q$-Jacobi polynomials with parameters $(q,t;a,b;c,d)$. The operator $D_N$ is taken from the papers \cite{S}, \cite{SK} (only our notation of the parameters is different, see \eqref{eq5.C}). We regard $D_N$ as an operator on $C(\wt\Om_N(q,t;a,b))$ with the domain $\Sym(N)\subset C(\wt\Om_N(q,t;a,b))$.  Theorem \ref{thm5.A} asserts that in this realization, $D_N$ gives rise to a Feller semigroup $\{T_N(s)\}$ on $C(\wt\Om_N(q,t;a,b))$, which implies in turn that we get a Markov process on $\wt\Om(q,t;a,b)$. This is a $q$-analog of results of Demni \cite{D} and Remling--R\"osler \cite{RR} about the Markov dynamics related to Heckman--Opdam's Jacobi polynomials. Theorem \ref{thm5.A} is proved by a direct verification of the classical positive maximum principle (see its adapted version in section \ref{sect2.4}). The argument is quite elementary but long; it requires a careful analysis of the possible behavior of symmetric polynomials, viewed as functions on $\wt\Om_N(q,t;a,b)$, near extremal points. 

Section \ref{sect6} is a preparation to the last section \ref{sect7}. We list necessary facts about the $N$-variate big $q$-Jacobi polynomials and their analogs in the algebra of symmetric functions. The material is taken from \cite{S}, \cite{SK}, and \cite{Ols-2021b}.

In section \ref{sect7} we prove the main result, Theorem \ref{thm7.A}. It is obtained from Theorem \ref{thm5.A} by applying the method of intertwining Markov kernels from \cite{BO-2012}.

\section{Generalities on Feller semigroups}\label{sect2}

The results collected in this section are contained in a number of  textbooks. Our base reference is Ethier-Kurtz \cite{EK}; some other possible sources are  Ito \cite{Ito}, Kallenberg \cite{Kall}, and Liggett \cite{Ligg}.

Below we denote the time parameter by $s$ instead of the usual $t$ to avoid confusion with the standard notation $(q,t)$ for the parameters of Macdonald's polynomials.

\subsection{Contraction semigroups and their generators}\label{sect2.1}

Let $\F$ be a separable real Banach space. A \emph{contraction semigroup} on $\F$ is a one-parameter strongly continuous semigroup $\{T(s): s\ge0\}$ of operators $\F\to\F$ such that $T(0)=1$ and $\Vert T(s)\Vert\le 1$ for all $s\ge0$. 

The \emph{generator} of $\{T(t)\}$ is the linear operator $A$ on $\F$ defined by
$$
Af:=\lim_{s\to +0} \frac 1s(T(s) f - f).
$$
The \emph{domain} of $A$, denoted by $\Dom (A)$, is the set of all vectors $f\in\F$ for which the above limit exists in the sense of the norm convergence. 

An operator $A$ on $\F$ is the generator of a contraction semigroup $\{T(s)\}$ if and only if $\Dom(A)$ is dense in $\F$, $A$ is \emph{dissipative} (meaning that $\Vert r f- Af\Vert \ge r \Vert f\Vert$ for all $f\in\F$ and $r>0$), and $\Ran (r -A)=\F$ for all $r>0$, where $\Ran(\cdot)$ denotes the \emph{range} of an operator. This is the Hille-Yosida theorem (\cite{EK}, ch. 1, Thm. 2.6). 

A contraction semigroup $\{T(s)\}$ is determined by its generator $A$ uniquely. A way to reconstruct $\{T(s)\}$ from $A$ is as follows (\cite{EK}, ch. 1, Proposition 2.7): for every $f\in\F$,
\begin{equation}\label{eq2.A}
T(s)f=\lim_{r\to+\infty} \exp (s A_r) f, \qquad  A_r:=\frac{r A}{r-A}.
\end{equation}
Note that $A_r$ is a bounded operator, so that $\exp(s A_r)$ is well defined. We use \eqref{eq2.A} in the proof of Theorem \ref{thm7.A}. 

\subsection{Pre-generators}\label{sect2.2}

An operator $A$ on $\F$ is said to be a \emph{pre-generator} of a contraction semigroup $\{T(s)\}$ if $A$ is closable and its closure $\ov A$ is the generator of $\{T(s)\}$. (Note that every generator is closed.) 

$A$ is a pre-generator if and only if it is densely defined, dissipative, and $\Ran(r-A)$ is dense for every $r>0$ (\cite{EK}, ch. 1, Theorem 2.12).

\subsection{Feller semigroups}\label{sect2.3}

Let $E$ is a \emph{compact} metrizable separable space. We take as $\F$ the space $C(E)$ of continuous real-valued functions on $E$ with the supremum norm. 

A contraction semigroup $\{T(s)\}$ on $C(E)$ is said to be a \emph{Feller semigroup} if it is \emph{positive} (meaning that $f\ge 0$ entails $T(s) f\ge0$) and \emph{conservative} (meaning that $T(s)1=1$, where $1$ stands for the constant function equal to $1$; here the conditions should hold for every $s\ge0$).  

This is a simplified version of a more general definition (\cite[ch. 4, section 2.2]{EK}) applicable to locally compact spaces. However, for our purposes, the simpler case of compact spaces is sufficient.

\subsection{Pre-generators of Feller semigroups}\label{sect2.4}
Let $E$ be a compact space, as above. An operator $A$ on $C(E)$ serves as a pre-generator of a Feller semigroup if and only if the following three conditions are satisfied:

(i) $\Dom(A)$ is dense.

(ii)  $\Ran(r-A)$ is dense for all $r>0$. 

(iii) For any function $f\in\Dom(A)$ and any point $x\in E$ where $f$ attains its minimum, one has $Af(x)\ge0$.

This is a slight adaptation, to the case of compact spaces, of a more general result (see, e.g. \cite[ch. 4, Theorem 2.2]{EK}). The condition (iii) is a simplified version of the  \emph{positive maximum principle}. We prefer a formulation with points of minimum, not maximum; there is no difference as $f$ can be replaced with $-f$. 

\subsection{Feller processes}\label{sect2.5}

Let $E$ be a compact space, as above. Every Feller semigroup $\{T(s)\}$ on $C(E)$ gives rise to a continuous time Markov process $X(s)$ on $E$  such that 
$$
T(s)f(x)=\int_E p(s;x,dy)f(y), \qquad f\in C(E), \quad x\in E, \quad s\ge0,
$$
where $p(s;x,dy)$ denotes the transition function of $X(s)$. Moreover, the sample trajectories of $X(s)$ are almost surely \emph{c{\`a}dl{\`a}g} (meaning that they are right-continuous and have left limits).

See \cite[ch. 4, Theorem 2.7]{EK} for a stronger formulation.  
This theorem provides a convenient tool for constructing Markov processes with good properties. In the present paper, however, we do not discuss probabilistic questions and focus on Feller semigroups only.

\section{One-particle processes}\label{sect3}

\subsection{Some notation}

Let $a,b,c,d$, and $q$ be fixed parameters. At this moment they can be complex numbers$, abq\ne0$; further constraints will be imposed shortly. 

We denote by  $\si^+(x)$ and $\si^-(x)$ two rational functions of a variable $x$, given by
\begin{gather}
\si^+(x)=-\,\frac{q}{ab} (c-x^{-1})(d-x^{-1}), \label{eq3.C1}\\
\si^-(x)= -\, \frac{q^2}{ab}\,(aq^{-1}-x^{-1})(bq^{-1}-x^{-1}). \label{eq3.C2}
\end{gather}

The \emph{$q$-shift operator} $S_q^+$ and its inverse $S_q^-$ are defined by 
$$
S_q^\pm f(x)=f(xq^{\pm1}),
$$
where $f(x)$ is a test function. (The standard notation for the $q$-shift operator is $T_q$, but we replace ``$T$'' by ``$S$'' to avoid confusion with the notation for Feller semigroups.)

\subsection{The ordinary $q$-difference operator $D$}

\begin{definition}[cf. Groenvelt \cite{G-2009}, sect. 2.1]
We set 
$$
D:=\si^+(x)(S_q^+-1)+\si^-(x)(S_q^--1).
$$
In more detail, $D$ acts on a test function $f(x)$ by
\begin{equation}\label{eq3.B}
Df(x)=\si^+(x)(f(xq)-f(x))+\si^-(x)(f(xq^{-1}-f(x)),
\end{equation}
where we write $Df(x)$ instead of the more pedantic notation $(D f)(x)$.
\end{definition}

\begin{lemma} 
$D$ acts on the monomials $x^n$, $n\in\Z$, according to the following formula
\begin{multline}\label{eq3.A}
D x^n=-(q^{-n}-1)\left(\frac{-\, cdq^{n+1}}{ab}+1\right) x^n\\
+\frac{q}{ab}\left((c+d)(q^n-1)+(a+b)(q^{-n}-1)\right)x^{n-1}\\
-\frac{q}{ab}\left((q^n-1)+q(q^{-n}-1)\right) x^{n-2}.
\end{multline}
\end{lemma}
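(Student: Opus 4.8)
The plan is to compute $Dx^n$ directly from the definition \eqref{eq3.B} by expanding the two pieces $\si^+(x)(S_q^+-1)x^n$ and $\si^-(x)(S_q^--1)x^n$ separately and then collecting powers of $x$. The $q$-shift operators act on monomials in the simplest possible way: $S_q^+ x^n = (xq)^n = q^n x^n$ and $S_q^- x^n = q^{-n}x^n$, so $(S_q^+-1)x^n = (q^n-1)x^n$ and $(S_q^--1)x^n = (q^{-n}-1)x^n$. Thus the whole computation reduces to multiplying these by the explicit rational functions $\si^\pm(x)$ and simplifying. Since $\si^+(x)$ and $\si^-(x)$ are, after expanding the products in \eqref{eq3.C1}--\eqref{eq3.C2}, polynomials in $x^{-1}$ of degree $2$, multiplying by $x^n$ produces a combination of the three monomials $x^n$, $x^{n-1}$, $x^{n-2}$, which matches the right-hand side of \eqref{eq3.A}.

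Concretely, I would first expand
\begin{equation*}
\si^+(x) = -\frac{q}{ab}\bigl(cd - (c+d)x^{-1} + x^{-2}\bigr),
\end{equation*}
and similarly
\begin{equation*}
\si^-(x) = -\frac{q^2}{ab}\bigl(q^{-2}ab - q^{-1}(a+b)x^{-1} + x^{-2}\bigr)
= -\frac{1}{ab}\bigl(ab - q(a+b)x^{-1} + q^2 x^{-2}\bigr).
\end{equation*}
Then I would write $Dx^n = (q^n-1)\,\si^+(x)\,x^n + (q^{-n}-1)\,\si^-(x)\,x^n$ and distribute, so that each $x^{-j}$ factor lowers the exponent of the ambient $x^n$. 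This gives a sum over the three exponents $n$, $n-1$, $n-2$, and the remaining task is purely bookkeeping: collect the coefficient of each power.

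The coefficient of $x^n$ comes only from the constant terms of the two expansions, namely $-\frac{q}{ab}cd\,(q^n-1) - (q^{-n}-1)$; after grouping, one checks this equals the stated leading coefficient $-(q^{-n}-1)\bigl(\tfrac{-cd\,q^{n+1}}{ab}+1\bigr)$, which is where the cross term $\tfrac{cd\,q^{n+1}}{ab}(q^{-n}-1)(q^n-1)$-type rearrangement happens. The coefficient of $x^{n-1}$ assembles the $x^{-1}$ contributions, giving $\frac{q}{ab}\bigl((c+d)(q^n-1)+(a+b)(q^{-n}-1)\bigr)$, and the coefficient of $x^{n-2}$ assembles the $x^{-2}$ contributions, giving $-\frac{q}{ab}\bigl((q^n-1)+q(q^{-n}-1)\bigr)$.

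There is no genuine obstacle here; the only thing requiring care is the algebraic manipulation of the coefficient of $x^n$, where the naive expression $-\frac{q\,cd}{ab}(q^n-1)-(q^{-n}-1)$ must be repackaged into the factored form displayed in \eqref{eq3.A}. The identity to verify is that $-\frac{q\,cd}{ab}(q^n-1) - (q^{-n}-1)$ equals $-(q^{-n}-1) + \frac{cd\,q^{n+1}}{ab}(q^{-n}-1)$, i.e. one rewrites $q^n-1 = -q^n(q^{-n}-1)$ to pull out the common factor $(q^{-n}-1)$. Once this single rewriting is spotted, everything collapses to \eqref{eq3.A} termwise, and the lemma follows.
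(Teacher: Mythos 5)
Your computation is correct and is exactly what the paper does: its proof of this lemma is literally the words ``Direct computation,'' and your expansion of $\si^\pm(x)$, application of $S_q^\pm$ to $x^n$, and collection of the coefficients of $x^n$, $x^{n-1}$, $x^{n-2}$ (including the rewriting $q^n-1=-q^n(q^{-n}-1)$ needed for the factored leading coefficient) all check out.
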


\begin{proof}
Direct computation.
\end{proof}

\begin{corollary}
$D$ is correctly defined on $\C[x]$, the space of polynomials.
\end{corollary}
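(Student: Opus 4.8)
The plan is to read this off directly from the Lemma, the point being that $D$ actually preserves $\C[x]$ rather than producing Laurent polynomials with negative powers of $x$. By linearity of $D$ it suffices to show $Dx^n\in\C[x]$ for each integer $n\ge0$. Formula \eqref{eq3.A} already expresses $Dx^n$ as a linear combination of the three powers $x^n$, $x^{n-1}$, $x^{n-2}$, so the entire question is whether any \emph{negative} power of $x$ survives — such a power could in principle appear because $\si^+$ and $\si^-$ are singular at the origin.

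For every $n\ge2$ the three exponents $n,n-1,n-2$ are all nonnegative, so $Dx^n$ is automatically a polynomial and nothing needs to be checked. The whole content of the corollary is therefore concentrated in the two boundary values $n=0$ and $n=1$, and the only step I would actually carry out is to verify that the dangerous low-degree coefficients in \eqref{eq3.A} vanish there. When $n=0$ each of the three coefficients carries a vanishing factor (namely $q^{-n}-1$ in the coefficient of $x^n$, and combinations of $q^n-1$ and $q^{-n}-1$ in the coefficients of $x^{n-1}$ and $x^{n-2}$), so in fact $D1=0$. When $n=1$ the only negative power that could occur is $x^{n-2}=x^{-1}$, and its coefficient in \eqref{eq3.A} is
$$
-\frac{q}{ab}\bigl((q-1)+q(q^{-1}-1)\bigr)=-\frac{q}{ab}\bigl(q-1+1-q\bigr)=0,
$$
so $Dx^1$ again has no negative-power term.

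Combining these observations, $Dx^n\in\C[x]$ for all $n\ge0$, and by linearity $D$ maps $\C[x]$ into itself, which is exactly the assertion. I do not anticipate any genuine obstacle: the substantive computation is the one already recorded in \eqref{eq3.A}, and this corollary is merely the remark that the potentially problematic coefficients at low degree cancel. The only point worth flagging is the conceptual reason the cancellation must happen — it reflects the fact that $D$, built from the $q$-shifts $S_q^\pm$ and the prefactors $\si^\pm$, is arranged precisely so that the apparent pole at $x=0$ is spurious on polynomials.
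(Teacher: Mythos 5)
Your proposal is correct and follows essentially the same route as the paper: reduce to the monomials $x^n$, note that only $n=0$ and $n=1$ could produce negative powers, and check that the relevant coefficients in \eqref{eq3.A} vanish in those two cases. The explicit computation you give for the coefficient of $x^{-1}$ at $n=1$ is exactly the verification the paper leaves as ``easy to do.''
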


\begin{proof} 
From \eqref{eq3.A} it is clear that $D$ acts on the space $\C[x,x^{-1}]$ of Laurent polynomials. To prove that $D$ preserves $\C[x]$ we will check that $D x^n\in\C[x]$ for all $n\in\Z_{\ge0}$. Since $D$ decreases degree at most by $2$, the only cases to be checked are that of $n=0$ and $n=1$. This is easy to do. Indeed:

$\bullet$ for $n=0$, all coefficients in \eqref{eq3.A} vanish because $q^{\pm n}-1=0$ for $n=0$; this is also seen from the initial definition of $D$, because $S_q^\pm-1$ annihilates the constants; 

$\bullet$ for $n=1$, the coefficient of $x^{n-2}=x^{-1}$ vanishes, so that $D x$ does not have a nonzero term proportional to $x^{-1}$. 

This completes the proof. 
\end{proof}

\subsection{Constraints on the $5$-tuple $(q; \abcd)$}\label{sect3.3}

We will assume that 
$$
0<q<1, \quad b<0<a, \quad  c\in\C\setminus\R,  \quad d=\ov c. 
$$

These constraints imply in particular that $D$ is well defined on $\R[x]$. 

\subsection{The $q$-lattice $\Iqab$ and its compactification $\wt I_{q;a,b}$}

We set 
$$
\begin{aligned}
\Iqab:&=a^{-1}q^{\Z_{\ge1}}\cup b^{-1}q^{\Z_{\ge1}}\\
&=\{b^{-1}q, b^{-1}q^2, b^{-1}q^3,\dots\}\cup \{\dots, a^{-1}q^3, a^{-1}q^2,  a^{-1}q\}.
\end{aligned}
$$
This is a bounded countable subset of $\R\setminus\{0\}$ which has $0$ as its (unique) accumulation point. Next, we set 
$$
\wt I_{q;a,b}:=\Iqab\cup\{0\}.
$$
It is a compact subset of $\R$. 

Note that polynomials in $x$ are uniquely determined by their restriction to $\wt I_{q;a,b}$. This enables us to regard $\R[x]$ as a subspace of the Banach space $C(\wt I_{q;a,b})$. By the Stone-Weierstrass theorem, this subspace is dense. 

In what follows we regard $D$ as an operator on $C(\wt I_{q;a,b})$ with domain $\R[x]$. 

Due to the constraints imposed on the parameters we have  
\begin{gather}
\text{$\si^+(x)>0$ for all $x\in\wt I_{q;a,b}$}, \label{eq3.E1} \\
\text{$\si^-(x)>0$ for all $x\in I_{q;a,b}$},   \label{eq3.E2}\\
\si^-(a^{-1}q)=\si^-(b^{-1}q)=0. \label{eq3.E3}
\end{gather}
This will be important for us.

\subsection{Markov dynamics on $\wt I_{q;a,b}$}\label{sect3.5}

Recall that the notions of Feller semigroup and its pre-generator were defined in section \ref{sect2}.

\begin{theorem}\label{thm3.A}
The operator $D$ serves as a pre-generator of a Feller semigroup on $C(\wt I_{q;a,b})$.
\end{theorem}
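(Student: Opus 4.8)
The plan is to verify the three conditions from section \ref{sect2.4} (the criterion for a pre-generator of a Feller semigroup on $C(E)$ with $E=\wt I_{q;a,b}$), since conditions (i) and (ii) are largely formal and condition (iii), the positive maximum principle, is where the real content lies. For condition (i), the domain $\R[x]$ is dense in $C(\wt I_{q;a,b})$ by the Stone-Weierstrass remark already recorded in the excerpt, so density is immediate. For condition (ii), I would exploit the fact that $D$ is \emph{triangular} on the monomial basis: formula \eqref{eq3.A} shows that $Dx^n$ has leading term $-(q^{-n}-1)(\tfrac{-cdq^{n+1}}{ab}+1)x^n$ plus strictly lower-degree terms. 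Hence for each $r>0$ the operator $r-D$ acts triangularly on $\R[x]$ with diagonal entries $r+(q^{-n}-1)(\tfrac{-cdq^{n+1}}{ab}+1)$; under the parameter constraints ($c=\bar d$, so $cd=|c|^2>0$, together with $0<q<1$) one checks these diagonal entries are nonzero for every $n\ge0$ and every $r>0$, so $r-D$ maps $\R[x]$ onto itself and in particular $\Ran(r-D)\supseteq\R[x]$ is dense. (This is cleaner than estimating resolvents and uses only the explicit action.)

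The heart of the proof is condition (iii). I would take $f\in\R[x]$ and a point $x_0\in\wt I_{q;a,b}$ at which $f$ attains its minimum on the compact set $\wt I_{q;a,b}$, and show $Df(x_0)\ge0$. Writing
$$
Df(x_0)=\si^+(x_0)\bigl(f(x_0 q)-f(x_0)\bigr)+\si^-(x_0)\bigl(f(x_0 q^{-1})-f(x_0)\bigr),
$$
the idea is that at a minimum the two bracketed differences are nonnegative \emph{provided} the shifted points $x_0 q$ and $x_0 q^{-1}$ still lie in $\wt I_{q;a,b}$, because then $f$ evaluated there is $\ge f(x_0)$ by minimality; combined with the positivity statements \eqref{eq3.E1}, \eqref{eq3.E2} this would force $Df(x_0)\ge0$. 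So the crux is a careful case analysis at the points where a shift leaves the lattice, namely the two outermost points $a^{-1}q$ and $b^{-1}q$ (where $S_q^-$ sends $x_0$ to $a^{-1}$ or $b^{-1}$, which are not in $\wt I_{q;a,b}$) and the accumulation point $0$.

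The boundary points are exactly where the geometry of the $q$-grid was engineered to cooperate: at $x_0=a^{-1}q$ and $x_0=b^{-1}q$ the coefficient $\si^-$ vanishes by \eqref{eq3.E3}, so the problematic term $\si^-(x_0)(f(x_0 q^{-1})-f(x_0))$ drops out entirely, and one is left only with $\si^+(x_0)(f(x_0 q)-f(x_0))\ge0$ since $x_0 q\in\wt I_{q;a,b}$ and $\si^+(x_0)>0$. At the accumulation point $x_0=0$ one has $\si^+(0)\cdot(f(0)-f(0))=0$ by the defining property $S_q^\pm-1$ annihilates constants at the fixed point $0$ (both shifts fix $0$), so $Df(0)=0\ge0$ trivially; here I should check that the rational functions $\si^\pm(x)$ and the difference quotients combine to a well-defined finite value as $x\to0$ along the lattice, but since $D$ preserves $\R[x]$ (the Corollary) and polynomials are continuous, $Df$ is a genuine polynomial and $Df(0)$ is simply its value at $0$, which causes no difficulty.

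The main obstacle I anticipate is not any single estimate but the bookkeeping of the case analysis near the endpoints and the need to confirm that the \emph{only} places where a $q$-shift escapes the lattice are precisely the two endpoints neutralized by \eqref{eq3.E3}; once that is verified the positivity of $\si^\pm$ on the interior does all the work, and the verification of (iii) is purely a matter of minimality of $f$ at $x_0$. Granting (i), (ii), (iii), the cited criterion from section \ref{sect2.4} shows $D$ is a pre-generator of a Feller semigroup on $C(\wt I_{q;a,b})$, completing the proof.
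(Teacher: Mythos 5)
Your verification of conditions (i) and (ii) and of the two lattice cases in (iii) (interior points and the endpoints $a^{-1}q$, $b^{-1}q$) matches the paper's argument. The genuine gap is in your treatment of the accumulation point $x_0=0$, which is actually the only nontrivial case. You claim $Df(0)=0$ ``trivially'' because both shifts fix $0$, but this is false: the coefficients $\si^\pm(x)$ contain $x^{-1}$ and $x^{-2}$ and blow up at $0$, so you cannot evaluate the defining expression \eqref{eq3.B} at $x=0$ termwise. The value $Df(0)$ is the constant term of the polynomial $Df$, and by \eqref{eq3.A} the operator $D$ lowers degree by up to $2$, so $Df(0)$ receives contributions from the linear and quadratic parts of $f$. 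Concretely, $(Dx^2)(0)=-(1-q)(1-q^2)/(ab)>0$, so already for $f=x^2$ (which does attain its minimum at $0$) one has $Df(0)>0$, not $0$; and $(Dx)(0)$ is generically nonzero as well, so for a general $f$ the sign of $Df(0)$ is not automatic.

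The missing argument, which is the paper's Variant 3, runs as follows: writing $f=f_0+f_1x+f_2x^2+\cdots$ and discarding $f_0$ (as $D$ kills constants), minimality of $f$ at $0$ forces $f_1=0$ (otherwise $f$ changes sign near $0$, since the lattice accumulates at $0$ from both sides) and then $f_2\ge0$. Since $D$ decreases degree by at most $2$, one gets $Df(0)=f_2\cdot(Dx^2)(0)$, and the explicit computation $(Dx^2)(0)=-(1-q)(1-q^2)/(ab)>0$ (using $ab<0$) yields $Df(0)\ge0$. Without this step your proof of the positive maximum principle is incomplete at precisely the point where the structure of $D$ (rather than just the geometry of the grid) enters.
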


Combining this result with the general theorem from subsection \ref{sect2.5} we obtain that $D$ gives rise to a Markov process on $\wt I_{q;a,b}$.

\begin{proof}
We apply the criterion stated in section \ref{sect2.4}. 

Condition (i) is obvious. Indeed, $\Dom(D)$ is $\R[x]$, which is dense in $C(\wt I_{q;a,b})$. 

Condition (ii) is checked as follows. From \eqref{eq3.A} and the constraints on the parameters it follows that 
$$
D x^n=\m_n x^n + \text{lower degree terms}, \quad n\in\Z_{\ge0},
$$
where $\m_0=0$ and $\m_n<0$ for $n\ge1$. From this we see that for every $r>0$, the matrix of the operator $r-D$  in the basis $\{x^n: n\in\Z_{\ge0}\}$ has triangular form, with nonzero entries on the diagonal. Therefore, $\Ran(r-D)$ is the whole space $\R[x]$, as required. 

We turn to condition (iii). Let $f\in\R[x]$ be arbitrary and $\wt x\in\wt I_{q;a,b}$ be a point where $f$ attains its minimum on $\wt x\in\wt I_{q;a,b}$. We have to check that $D f(\wt x)\ge0$. Let us examine possible variants for the location of the point  $\wt x$.  

\smallskip

\emph{Variant} \/1: $\wt x$ is neither an endpoint, nor $0$. This means that both $\wt x\in\Iqab$ and $\wt xq^{\pm1}\in\Iqab$. Then it follows from \eqref{eq3.E1} and \eqref{eq3.E2} that $\si^\pm(\wt x)>0$. On the other hand, since $\wt x$ is a point of minimum, $f(\wt x q^{\pm1})-f(\wt x)\ge0$. Then \eqref{eq3.B} shows that $D f(\wt x)\ge0$.

\smallskip

\emph{Variant} \/2: $\wt x$ is an endpoint, that is, $\wt x=b^{-1}q$ or $\wt x=a^{-1}q$. Then $\wt x q^{-1}$ is outside $\wt I_{q;a,b}$, so we cannot  assert anymore that $f(\wt x q^{-1})-f(\wt x)\ge0$. However, we do not need this, because the factor $\si^-(\wt x)$ vanishes due to \eqref{eq3.E3}, so that the summand with $f(\wt x q^{-1})-f(\wt x)$ disappears. We are left with the remaining summand and the previous argument works.  

\smallskip

\emph{Variant} \/3: $\wt x=0$. It suffices to prove that if $0$ is a point of local minimum, then $D^{q;\abcd} f(0)\ge0$. Write $f(x)$ as a linear combination of monomials: 
$$
f(x)=f_0+f_1x +f_2 x^2+\dots\,.
$$
The constant term $f_0$ plays no role, because $D$ annihilates the constants. Thus, we may assume $f_0=0$, so that $f(x)=f_1 x+O(x^2)$ near $x=0$. If $f_1\ne0$, then $f(x)$ changes the sign near $0$, which cannot happen, as $0$ is a point of minimum. Therefore, $f_1=0$ and hence $f(x)=f_2 x^2+ O(x^3)$ near $0$. Again, since $0$ is a point of minimum, we have $f_2\ge0$. 

Next, we apply formula \eqref{eq3.A}. Observe that $D f(0)=f_2 \cdot(Dx^2)(0)$, because $D$ decreases degree at most by $2$. It suffices to check that $(D x^2)(0)>0$. 

Setting $n=2$ in \eqref{eq3.A} we obtain
$$
(D x^2)(0)=-\frac{q}{ab}\left((q^2-1)+q(q^{-2}-1)\right)=-\, \frac{(1-q)(1-q^2)}{ab}.
$$
Since $ab<0$ and $0<q<1$, this quantity is positive. 

This completes the proof. 
\end{proof}

\section{The spaces $\wt\Om$ and $\wt\Om(q,t;a,b)$}\label{sect4}

\subsection{The space $\wt\Om$}

Let $\N:=\{1,2,3,\dots\}$ be the set of positive integers and $\NN:=\N\cup\{\infty\}$ be its one-point compactification. 
We endow $\NN$ with the natural total order, so that $n<\infty$ for any $n\in\N$. Let 
$\Ninfty:=\NN\times\NN\times\dots$ be the the direct product of countably many copies of $\NN$. It is a compact space with respect to the product topology. Elements of $\Ninfty$ are infinite sequences $(n_1,n_2,\dots)$ where $n_i\in\NN$ for each $i$.  We say that $n_1,n_2,\dots$ are the \emph{coordinates} of the sequence. A sequence is said to be \emph{monotone} if its coordinates are weakly increasing: $n_1\le n_2\le \dots$\,. 

\begin{definition}\label{def4.A}
Let $\wt\Om\subset \Ninfty\times\Ninfty$ be the subset  of pairs $\om=(\om^+,\om^-)$ such that both sequences  $\om^+\in\Ninfty$ and $\om^-\in\Ninfty$ are monotone.
\end{definition} 

Clearly $\wt\Om$ is closed in $\Ninfty\times\Ninfty$ and hence is also a compact space. It is metrisable and totally disconnected. 

\begin{definition}\label{def4.B}
(i) For $\om=(\om^+,\om^-)\in\wt\Om$, we set 
$$
|\om|:=|\om^+|+|\om^-|,
$$
where $|\om^\pm|$ stands for the number of coordinates in $\om^\pm$ not equal to $\infty$; this number may be equal to infinity; so $|\om|$ may be equal to infinity, too, because we agree that $\infty+\infty=\infty$. 

(ii) For $N=1,2,\dots$ we set
$$
\Om_N:=\{\om\in\wt\Om: |\om|=N\}, \quad \wt\Om_N:=\{\om\in\wt\Om: |\om|\le N\}.
$$
We also agree that $\Om_0=\wt\Om_0$ is the singleton formed by the unique element $\om$ with $|\om|=0$ (all coordinates are equal to $\infty$).

(iii) We set
$$
\Om_\infty:=\{\om\in\wt\Om: |\om|=\infty\}.
$$
\end{definition}

The following assertions are evident:

\sms

$\bullet$ $\wt\Om$ and $\wt\Om_N$ can be represented as disjoint unions of subsets,
$$
\wt\Om=\Om_0\sqcup\Om_1\sqcup\Om_2\sqcup\dots\sqcup\Om_\infty, \qquad \wt\Om_N=\Om_0\sqcup\dots\sqcup \Om_N.
$$

\sms

$\bullet$ For each $N$, the set $\wt\Om_N$ is the closure of $\Om_N$, so that the sets $\wt\Om_N$ form an increasing chain of compact subsets of $\wt\Om$. 

\sms

$\bullet$ Their union is a countable dense subset of $\wt\Om$. 

\sms

$\bullet$ Its complement, $\Om_\infty$, is a dense subset of $\wt\Om$, too. Note that it is uncountable.

\subsection{The bijection $\wt\Om\to\wt\Om(q,t;a,b)$}\label{sect4.2}

We fix the parameters $q,t,a,b$ such that $q,t\in(0,1)$ and $b<0<a$. To each $\om=(\om^+,\om^-)\in\wt\Om$ we assign a pair $X=(X^+,X^-)$ of sequences of real numbers as follows. Let $(k_1,k_2,\dots)$ and $(l_1,l_2,\dots)$ be the coordinates of $\om^+$ and $\om^-$, respectively. Then we set 
\begin{equation}\label{eq4.B}
\begin{aligned}
\text{$X^+:=(x^+_1,x^+_2,\dots)$, where $x^+_i:=a^{-1}q^{k_i}t^{i-1}$ for $i=1,2,\dots$} \\
\text{$X^-:=(x^-_1,x^-_2,\dots)$, where $x^-_i:=b^{-1}q^{l_i}t^{i-1}$ for $i=1,2,\dots$},
\end{aligned}
\end{equation}
with the understanding that $q^\infty:=0$. 

We write $X=X(\om)$ and $X^\pm=X^\pm(\om^\pm)$. The correspondence $\om\mapsto X(\om)$ depends on the quadruple $(q,t;a,b)$ but to simplify the notation, these parameters are suppressed here.  

The nonzero coordinates of $X^+$ are positive and strictly decreasing, while the nonzero coordinates of $X^-$ are negative and strictly increasing. More precisely, the nonzero coordinates satisfy the inequalities
\begin{equation}\label{eq4.A}
\frac{x^\pm_{i+1}}{x^\pm_i} \le t
\end{equation}
so that if there are infinitely many nonzero coordinates, then they converge to $0$ at least as quickly as a geometric progression with ratio $t$. 

All coordinates $x^\pm_i$ are contained in the closed interval $[b^{-1}q, a^{-1}q]$ surrounding the zero. 

The correspondence $\om\mapsto X(\om)$ is evidently injective. We denote by $\wt\Om(q,t;a,b)$ the image of $\wt\Om$ under this map and we equip $\wt\Om(q,t;a,b)$ with the topology of $\wt\Om$. Thus, $\wt\Om(q,t;a,b)$ becomes a compact space.

We also denote by $\Om_N(q,t;a,b)$, $\wt\Om_N(q,t;a, b)$, and $\Om_\infty(q,t;a,b)$ the images of $\Om_N$, $\wt\Om_N$, and $\Om_\infty$, respectively. 

Evidently, $\Om_\infty(q,t;a,b)$ is dense in $\wt\Om_\infty(q,t;a,b)$, $\Om_N(q,t;a, b)$ is dense in $\wt\Om_N(q,t;a, b)$, and $\wt\Om_N(q,t;a, b)$ is compact. 

Ignoring possible zero coordinates we may regard elements $X\in\wt\Om(q,t;a,b)$ as particle configurations on the punctured real line $\R^*$, so that $\wt\Om(q,t;a,b)$ becomes a subset of $\Conf(\R^*)$, in the notation of subsection \ref{sect1.1}. In this interpretation, the elements from $\Om_\infty(q,t;a,b)$ or $\Om_N(q,t;a,b)$ are configurations with infinitely many particles or $N$ particles, respectively.  

Alternatively, the elements $X\in\wt\Om_N(q,t;a,b)$ can be treated as $N$-particle configurations, provided that particles are allowed to stick together at $0$. In other words, then $X\subset\R$ becomes a \emph{multiset} of power $|X|=N$, with possible multiple points at $\{0\}$. This interpretation is used in section \ref{sect5}. 

In the case $N=1$ we return to the context of section \ref{sect3}: $\Om_1(q,t;a,b)=I_{q;a,b}$ and $\wt\Om_1(q,t;a,b)=\wt I_{q;a,b}$; the dependence of $t$ disappears here.

\subsection{The embedding $\Sym\hookrightarrow C(\wt\Om(q,t;a,b))$}

Recall that $\Sym$ denotes the algebra of symmetric functions over $\R$. To each $f\in\Sym$, we assign a function $f(\om)$ on the space $\wt\Om(q,t;a,b)$ by setting 
$$
f(\om):=f(X(\om)), \quad \om\in\wt\Om,
$$
meaning that we evaluate $f$ at the double collection $\{x^\pm_i;i=1,2,\dots\}$ of the coordinates of $X(\om)$. For the generators $p_1,p_2,p_3,\dots$ of $\Sym$ (the Newton power sums) this means 
\begin{equation}\label{eq4.C}
p_m(\om)=\sum_{i=1}^\infty (x^+_i)^m + \sum_{i=1}^\infty (x^-_i)^m= \sum_{i=1}^\infty (x^+_i)^m + (-1)^m\sum_{i=1}^\infty |x^-_i|^m, \quad m=1,2,3,\dots,
\end{equation}
and from \eqref{eq4.A} it follows that these sums converge. Thus, our definition for makes sense for the elements $p_m$ and hence for all $f\in\Sym$ as well.

Let $C(\wt\Om(q,t;a,b))$ be the commutative real Banach algebra of continuous functions on the compact space $\wt\Om(q,t;a,b)$ with pointwise operations and the supremum norm.

The next lemma will be used in sections \ref{sect6} and \ref{sect7}. 

\begin{lemma}\label{lemma4.A}

{\rm(i)} The functions on $\wt\Om(q,t;a,b)$ coming from elements $f\in\Sym$ are continuous, so that we obtain a unital algebra  morphism $\Sym\to C(\wt\Om(q,t;a,b))$.

{\rm(ii)} This morphism is injective.

{\rm(iii)} Its image is dense.

\end{lemma}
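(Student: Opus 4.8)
The plan is to prove the three assertions of Lemma \ref{lemma4.A} in order, exploiting the structure of the map $\om\mapsto X(\om)$ built in \eqref{eq4.B} together with the decay estimate \eqref{eq4.A}.

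\emph{Part (i): continuity.} Since $\Sym$ is generated as an algebra by the power sums $p_1, p_2, \dots$ and the assignment $f\mapsto f(\om)$ is an algebra homomorphism, it suffices to check that each $p_m(\om)$, given by the series \eqref{eq4.C}, is a continuous function on the compact space $\wt\Om(q,t;a,b)$ (equivalently, on $\wt\Om$ via the homeomorphism). First I would observe that the partial sums are continuous: each coordinate $x^\pm_i(\om)$ depends continuously on $\om^\pm$ because $q^{k_i}t^{i-1}$ is continuous in $k_i\in\NN$ under the convention $q^\infty=0$, and the topology on $\Ninfty$ is the one-point compactification. The key point is then uniform convergence of the series: from \eqref{eq4.A} we have $|x^\pm_i|\le |b^{-1}q\vee a^{-1}q|\,t^{i-1}$ (the first coordinate is bounded and each subsequent one shrinks by at least a factor $t$), so $|x^\pm_i|^m$ is dominated by $C^m t^{m(i-1)}$ uniformly in $\om$. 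Hence the tail $\sum_{i>I}|x^\pm_i|^m$ is bounded by a convergent geometric tail independent of $\om$, giving uniform convergence of \eqref{eq4.C} and therefore continuity of $p_m(\om)$. This yields the desired unital algebra morphism $\Sym\to C(\wt\Om(q,t;a,b))$.

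\emph{Part (ii): injectivity.} I would show that the morphism separates enough points, or more directly, that distinct $f\in\Sym$ give distinct functions. The cleanest route is to note that the points $X(\om)$ for $\om\in\Om_N$ already exhaust, as $N$ varies, a rich enough family of finite multisets: the values $p_1(\om),p_2(\om),\dots$ recover the multiset of nonzero coordinates of $X(\om)$ (finitely many moments determine a finite point configuration, and all moments determine a configuration with the decay \eqref{eq4.A}). Thus if $f\ne 0$ in $\Sym$, pick a configuration $\om$ of sufficiently many particles in generic position where $f$ does not vanish; since $f$ is a nonzero symmetric function it is nonzero on some finite multiset realizable as $X(\om)$ for some $\om\in\Om_N$ with $N$ large. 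Concretely, given $N$ at least the number of variables appearing in $f$, the image points $X(\om)$, $\om\in\Om_N$, range over an open-dense set of $N$-tuples (by varying the $k_i,l_i$), on which a nonzero polynomial cannot vanish identically. Hence $f(\om)\not\equiv 0$, proving injectivity.

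\emph{Part (iii): density.} The image is a unital subalgebra of $C(\wt\Om(q,t;a,b))$, so by the Stone--Weierstrass theorem it is dense once it separates points of $\wt\Om(q,t;a,b)$. Given two distinct points $X=X(\om)$ and $X'=X(\om')$, their nonzero-coordinate multisets differ, hence some power sum $p_m$ takes different values on them; since each $p_m$ lies in the image, points are separated. The one subtlety, which I expect to be the main obstacle, is confirming point-separation uniformly across the boundary strata where coordinates collide at $0$ or equal $\infty$ in $\om$: two distinct $\om,\om'$ could in principle yield the same multiset of \emph{nonzero} reals if the bookkeeping of zero coordinates were ambiguous. This is resolved by the injectivity of $\om\mapsto X(\om)$ established in subsection \ref{sect4.2} together with the strict inequalities making the nonzero coordinates of $X^+$ strictly decreasing and those of $X^-$ strictly increasing: distinct $\om$ give genuinely distinct multisets, so some moment $p_m$ distinguishes them. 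Invoking Stone--Weierstrass then completes the proof.
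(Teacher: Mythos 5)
Your proof follows essentially the same route as the paper's: for (i), uniform convergence of the power-sum series (driven by the geometric decay \eqref{eq4.A}) together with continuity of the individual coordinates; for (ii), recovery of the nonzero coordinates of $X(\om)$ from the values of symmetric functions; and for (iii), Stone--Weierstrass plus point separation --- the paper merely packages (ii) and (iii) through the generating series $E(\om;z)=\prod_i(1+x^+_iz)\prod_i(1+x^-_iz)$ of the elementary symmetric functions rather than through power sums. One small correction to your part (ii): the realizable $N$-tuples $X(\om)$, $\om\in\Om_N$, form a countable set, so they are not ``open-dense'' in the Euclidean topology; they are, however, Zariski dense (fix all indices but the last, let it run over its infinitely many admissible values, and induct on the number of variables), which is exactly what is needed for a nonzero symmetric polynomial not to vanish identically on them, so your argument stands after this rewording.
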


\begin{proof} 
(i) Because $x^+_1$ and $|x^-_1|$ are uniformly bounded (by the constants $a^{-1}q$ and $|b^{-1}|q$, respectively), the sums in \eqref{eq4.C} converge uniformly in $\om\in\wt\Om(q,t;a,b)$. Finally, from \eqref{eq4.B} and the definition of the topology on $\NN$ it follows that each coordinate $x^\pm_i$ is a continuous function on $\wt\Om(q,t;a,b)$. This proves (i).

(ii) Consider another system of generators of $\Sym$, the elementary symmetric functions $e_1,e_2,\dots$, and their standard generating series $E(z):=1+\sum_{m=1}^\infty e_m z^m$. Its specialization at $X(\om)$ is
$$
E(\om;z):=1+\sum_{m=1}^\infty e_m(X(\om))=\prod_{i=1}^\infty (1+x^+_i z)\cdot \prod_{i=1}^\infty (1+x^-_i z).
$$
This is an entire function of $z$, and its zeroes determine the nonzero coordinates of $X(\om)$ uniquely. This proves (ii). 

(iii) We apply the real version of the Stone-Weierstrass theorem. Because $\Sym$ contains $1$, it suffices to check that the functions from $\Sym$ separate the elements of $\wt\Om(q,t;a,b)$. But this is clear from the form of $E(\om;z)$: if $\om$ and $\om'$ are two distinct elements, then the corresponding entire functions $E(\om;z)$ and $E(\om';z)$ are distinct, too. This completes the proof. 
\end{proof}

\subsection{The embedding $\Sym(N)\hookrightarrow C(\wt\Om_N(q,t;a,b))$}

Let $N$ be a fixed positive integer. The algebra $\Sym(N)$ of symmetric polynomials in $N$ variables can be identified with the quotient of $\Sym$ by the ideal generated by the elements $e_n$ with $n>N$. Lemma \ref{lemma4.A} allows us to identify $\Sym$ with a subalgebra of $C(\wt\Om(q,t;a,b))$. The restriction map $f\mapsto f\big |_{\wt\Om_N(q,t;a,b)}$ annihilates all $e_n$ with indices $n>N$ and hence gives rise to a unital  algebra morphism $\Sym(N)\to C(\wt\Om_N(q,t;a,b))$. 

\begin{lemma}\label{lemma4.B}
The morphism $\Sym(N)\to C(\wt\Om_N(q,t;a,b))$ just defined is injective with dense image.
\end{lemma}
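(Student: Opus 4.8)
The plan is to treat the points of $\wt\Om_N(q,t;a,b)$ as multisets of size $N$ in $\R$ (with repeated zeros allowed), following the interpretation recorded at the end of subsection~\ref{sect4.2}, and to use that $\Sym(N)$ is exactly the algebra of polynomial functions in the elementary symmetric functions $e_1,\dots,e_N$ of such a multiset. Both injectivity and density then reduce to the question of how faithfully the coordinates $e_1,\dots,e_N$ record the points of $\wt\Om_N(q,t;a,b)$. Throughout I identify $f\in\Sym(N)$ with a symmetric polynomial in $N$ variables, so that for $\om\in\wt\Om_N$ the value $f(\om)$ is the evaluation of this polynomial at the size-$N$ multiset $X(\om)$ formed by the nonzero coordinates of $X^\pm$ padded with $N-|\om|$ zeros; this agrees with the restriction map from $\Sym$, since $X(\om)$ has at most $N$ nonzero entries and setting the variables beyond the $N$-th to zero realizes the projection $\Sym\to\Sym(N)$.

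For density I would invoke the real Stone--Weierstrass theorem, just as in the proof of Lemma~\ref{lemma4.A}(iii). The image is a unital subalgebra of $C(\wt\Om_N(q,t;a,b))$, so it remains only to verify that it separates points. First, distinct $\om\ne\om'$ in $\wt\Om_N$ yield distinct multisets $X(\om)\ne X(\om')$: the nonzero positive coordinates determine $\om^+$ and the nonzero negative coordinates determine $\om^-$ (by the injectivity established in subsection~\ref{sect4.2}), while the multiplicity of $0$ equals $N-|\om|$. Second, a multiset of size $N$ is recovered from the coefficients of $\prod_{y\in X}(z-y)=z^N-e_1(X)z^{N-1}+\dots\pm e_N(X)$, so the functions $e_1,\dots,e_N\in\Sym(N)$ already separate the points of $\wt\Om_N(q,t;a,b)$. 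Stone--Weierstrass then gives density.

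For injectivity I would show that any $f\in\Sym(N)$ vanishing on all of $\wt\Om_N(q,t;a,b)$ is the zero polynomial, and for this it suffices to test $f$ on the sub-family of configurations carrying all $N$ particles on the positive side, namely the tuples $(y_1,\dots,y_N)$ with $y_i=a^{-1}q^{k_i}t^{i-1}$ and $1\le k_1\le\dots\le k_N$. I claim this set is Zariski dense in $\R^N$, so that any polynomial (symmetric or not) vanishing on it vanishes identically. The claim follows by induction on $N$: fixing $k_1\le\dots\le k_{N-1}$ and letting $k_N$ run through $\{k_{N-1},k_{N-1}+1,\dots\}$ freezes $y_1,\dots,y_{N-1}$ while $y_N$ takes infinitely many values, so $f$ viewed as a one-variable polynomial in $y_N$ vanishes identically; hence each of its coefficients, a polynomial in $y_1,\dots,y_{N-1}$, vanishes at every admissible $(y_1,\dots,y_{N-1})$, i.e.\ on the analogous $(N-1)$-particle family, and the inductive hypothesis finishes the step (the base case $N=1$ being a one-variable polynomial with infinitely many roots).

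The routine points—the consistency of the two evaluations of $f$, the count of zeros in $X(\om)$, and the bookkeeping in the induction—are mechanical. The one place that demands genuine care is the Zariski-density claim behind injectivity: because the admissible exponents are coupled by the monotonicity $k_1\le\dots\le k_N$, one cannot vary the coordinates independently, and the inductive scheme above is tailored to circumvent this by freeing a single coordinate at a time. I expect this to be the main obstacle, though a mild one.
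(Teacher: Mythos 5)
Your proposal is correct. The density half is exactly the paper's argument: the proof of Lemma \ref{lemma4.B} simply says ``the same argument as in Lemma \ref{lemma4.A}, with the difference that we use only the first $N$ generators $e_1,\dots,e_N$,'' i.e.\ Stone--Weierstrass plus the fact that the polynomial $\prod_i(1+x_i^+z)\prod_i(1+x_i^-z)$, equivalently the tuple $(e_1,\dots,e_N)$, separates the points of $\wt\Om_N(q,t;a,b)$. Where you genuinely diverge is injectivity: the paper disposes of it by the same generating-series remark, which strictly speaking only establishes separation of points, whereas injectivity of the algebra morphism requires that no nonzero symmetric polynomial vanish identically on $\wt\Om_N(q,t;a,b)$. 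Your Zariski-density induction --- restricting to the all-positive configurations $y_i=a^{-1}q^{k_i}t^{i-1}$ with $1\le k_1\le\dots\le k_N$ and freeing one exponent at a time --- supplies a complete and self-contained proof of exactly this point, correctly handling the coupling of the exponents by the monotonicity constraint. So your write-up is, if anything, more careful than the paper's on the injectivity half, at the cost of some length; the paper's version buys brevity by leaning on the reader to see that the image of the configuration space under $(e_1,\dots,e_N)$ is Zariski dense.
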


\begin{proof} 
The same argument as in Lemma \ref{lemma4.A}, with the difference that we use only the first $N$ generators $e_1,\dots,e_N$. 
\end{proof}

\section{$N$-particle processes}\label{sect5}

\subsection{Preliminaries}\label{sect5.1}

We still assume that $q,t\in(0,1)$, $b<0<a$, $c=\bar d\in\C\setminus\R$. We fix $N\in\{2,3,\dots\}$. The functions $\si^+_N(x)$ and $\si^-_N(x)$ are given by (cf. \eqref{eq3.C1}, \eqref{eq3.C2})
\begin{gather}
\si^+_N(x)=t^{N-1}\si^+(x)=-\,\frac{qt^{N-1}}{ab} (c-x^{-1})(d-x^{-1}),   \label{eq5.A1} \\
\si^-_N(x)=t^{N-1}\si^-(x)=- \, \frac{q^2 t^{N-1}}{ab} (aq^{-1}-x^{-1})(bq^{-1}-x^{-1}).  \label{eq5.A2}
\end{gather}  
Given an $N$-tuple of variables $X=(x_1,\dots,x_N)$, we set 
$$
V_N=V_N(X)=\prod_{1\le i<j\le N}(x_i-x_j).
$$
Together with the $q$-shift operators $S^{\pm}_q$ we use similar $t$-shift operators: their action on a test function $f(x)$ is given by
$$
S_t^\pm(x)=f(xt^{\pm1}).
$$
We denote by $S_{q,i}^\pm$ and $S_{t,i}^\pm$ the operators $S_q^\pm$ and $S_t^\pm$ acting on the $i$th variable $x_i$, where $1\le i\le N$. 

Using this notation we introduce a partial $q$-difference operator in $N$ variables:
\begin{equation}\label{eq5.B}
D_N:=\sum_{i=1}^N\left(\frac{S_{t,i}^+ V_N}{V_N}\si_N^+(x_i)(S_{q,i}^+-1)+\frac{S_{t,i}^- V_N}{V_N}\si^-_N(x_i)(S_{q,i}^--1)\right).
\end{equation}
This definition is taken from Stokman \cite{S}. Specifically, the correspondence between our parameters $a,b,c,d$ and the parameters in \cite{S}, which we rename to $A,B,C,D$ in order to avoid confusion, is the following: 
\begin{equation}\label{eq5.C}
\begin{gathered}
A=ca^{-1}, \; B=db^{-1}, \; C=a^{-1}q,\; D=-b^{-1}q, \\
a=q C^{-1}, \; b=-q D^{-1}, \; c=Aq C^{-1}, \; d=-Bq D^{-1}.
\end{gathered}
\end{equation}
With this convention, $D_N$ coincides with the operator $- D^{A,B,C,D}_{N,q,t}$ from  \cite[(3.8)]{S} (note the minus sign!). 

\begin{lemma}\label{lemma5.A}
The operator $D_N$ is correctly defined on the space $\Sym(N)$.
\end{lemma}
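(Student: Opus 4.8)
The plan is to establish two things for every $f\in\Sym(N)$: that $D_N f$ is again \emph{symmetric}, and that it is a genuine \emph{polynomial} (a priori $D_N f$ is only a rational function, since the ratios $S_{t,i}^\pm V_N/V_N$ carry $V_N$ in the denominator and $\si^\pm_N(x_i)$ hides negative powers of $x_i$). The symmetry is the easy half. The operator $D_N$ is manifestly invariant under the symmetric group: a permutation $w$ of the variables sends the ratio $S_{t,i}^\pm V_N/V_N=\prod_{k\ne i}(x_it^{\pm1}-x_k)/(x_i-x_k)$, the factor $\si^\pm_N(x_i)$, and the shift $S_{q,i}^\pm$ into their counterparts indexed by $w(i)$, so the sum over $i$ in \eqref{eq5.B} is preserved. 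Hence $D_N$ commutes with the $S_N$-action on the field of rational functions, and therefore carries symmetric functions to symmetric ones. This already indicates why $D_N$ must be restricted to $\Sym(N)$: for a non-symmetric polynomial the poles analyzed below would not cancel.

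It remains to prove that $D_N f$ has no poles. The only candidate poles lie on the hyperplanes $\{x_i=x_j\}$ (coming from $V_N$) and $\{x_i=0\}$ (coming from $\si^\pm_N$), because the shift operators act on the polynomial $f$ and introduce no new denominators. I would treat the two families separately, checking in each case that the apparent simple pole has vanishing residue; regularity in codimension one then forces $D_N f$, as a rational function on $\C^N$ regular along every irreducible hypersurface, to be a polynomial. Along $\{x_i=x_j\}$ only the $i$th and $j$th summands are singular, each with a simple pole. This is the Macdonald-type cancellation: the two residues share the common factor $\si^\pm_N(x_j)$ and carry opposite Vandermonde residues, while the shifted evaluations $(S_{q,i}^\pm f)|_{x_i=x_j}$ and $(S_{q,j}^\pm f)|_{x_i=x_j}$ coincide precisely because $f$ is symmetric; hence they cancel.

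Along $\{x_i=0\}$ only the $i$th summand is singular. Here $\si^\pm_N(x_i)=O(x_i^{-2})$ while $(S_{q,i}^\pm-1)f=O(x_i)$ (the $x_i$-constant term is annihilated by $S_{q,i}^\pm-1$), so the pole is at most simple, and the Vandermonde ratios are regular at $x_i=0$ with value $1$. Consequently the residue depends only on the leading $x_i^{-2}$-coefficients of $\si^\pm_N$ and the linear-in-$x_i$ part of $f$, so it reduces to exactly the one-variable residue already seen to vanish: the coefficient $-\frac{q}{ab}\big((q^n-1)+q(q^{-n}-1)\big)$ of $x^{n-2}$ in \eqref{eq3.A} is zero at $n=1$ (the extra common factor $t^{N-1}$ in $\si^\pm_N$ does not affect the vanishing). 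Thus $D_N f$ is regular along $\{x_i=0\}$ as well.

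Combining the two residue computations, $D_N f$ is regular in codimension one, hence a polynomial, and by the first step it is symmetric, so $D_N f\in\Sym(N)$. The main obstacle is the cancellation along $\{x_i=x_j\}$: it is the only place where the symmetry of $f$ enters essentially, and the delicate point is the careful bookkeeping of the residues of the competing $i$th and $j$th summands, with the correct signs and the matching of the two shifted evaluations. The computation along $\{x_i=0\}$, by contrast, is essentially the one-variable verification of the Corollary in Section \ref{sect3}, the only novelty being the harmless presence of the Vandermonde prefactors.
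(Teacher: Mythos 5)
Your argument is correct, but it is a genuinely different route from the paper's. The paper disposes of the lemma in two lines by citing Stokman \cite[Lemma 4.1]{S} for the statement over $\C$ and then remarking that the coefficients are real. You instead give a self-contained proof: $S_N$-equivariance of $D_N$ for the symmetry, and a residue analysis along the only candidate polar divisors $\{x_i=x_j\}$ and $\{x_i=0\}$ for polynomiality. Both halves of your residue computation are sound: along $\{x_i=x_j\}$ the poles of the $i$th and $j$th summands are simple and their residues cancel because $(S^{\pm}_{q,i}f)\big|_{x_i=x_j}=(S^{\pm}_{q,j}f)\big|_{x_i=x_j}$ by symmetry of $f$, and along $\{x_i=0\}$ the pole is at most simple (since $\si^\pm_N(x_i)=O(x_i^{-2})$ while $(S^\pm_{q,i}-1)f=O(x_i)$) with residue proportional to $(q-1)+q(q^{-1}-1)=0$, which is indeed the $n=1$ case of \eqref{eq3.A} up to the harmless factor $t^{N-1}$. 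Since the denominator of $D_Nf$ divides $V_N\cdot\prod_i x_i^2$, absence of poles along these hypersurfaces forces $D_Nf$ to be a polynomial. What your version buys is transparency: it shows exactly where the symmetry of $f$ and the special form of $\si^\pm_N$ enter, and it ties the $N$-variable statement back to the univariate Corollary; the paper's version buys brevity at the price of an external reference. One small point you should add to match the statement as formulated over the base field $\R$: the operator has \emph{real} coefficients, because $\si^\pm_N(x)$ involves $c+d=2\Re c$ and $cd=|c|^2$, which are real; this is precisely the remark the paper's own proof makes.
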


\begin{proof}
From \cite[Lemma 4.1]{S} it follows that $D_N$ is defined on the space of symmetric polynomials over $\C$. But the same holds over the base field $\R$, because the coefficients $\si^\pm_N(x)$ take real values on $\R$ due to our conditions on the parameters. 
\end{proof}

\subsection{Verification of positive maximum principle for the operator $D_N$}

Using Lemma \ref{lemma4.B} we realize $\Sym(N)$ as a dense subspace of $C(\wt\Om_N(q,t;a,b))$.

The next proposition generalizes the main part of Theorem \ref{thm3.A}. 

\begin{proposition}\label{prop5.A}
Let $f\in\Sym(N)$ be arbitrary and $\wt X\in\wt\Om_N(q,t;a,b)$ be a configuration where $f$ attains its minimum on\/ $\wt\Om_N(q,t;a,b)$. Then $D_Nf(\wt X)\ge0$.
\end{proposition}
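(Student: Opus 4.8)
The plan is to verify condition (iii) of the positive maximum principle from Section \ref{sect2.4}, following the strategy already used for Theorem \ref{thm3.A} but now keeping track of the extra Vandermonde factors in \eqref{eq5.B}. Since $D_Nf\in\Sym(N)$ by Lemma \ref{lemma5.A}, the number $D_Nf(\wt X)$ is unambiguous; the difficulty is that the pointwise formula \eqref{eq5.B} becomes singular when two coordinates of $\wt X$ coincide, and on $\wt\Om_N(q,t;a,b)$ this happens precisely at the accumulation point $0$. Indeed, by \eqref{eq4.A} and \eqref{eq4.B} the nonzero coordinates of any $X\in\wt\Om_N(q,t;a,b)$ are pairwise distinct (two of the same sign differ by a factor $\le t<1$), so the only coincidences are among coordinates equal to $0$. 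I would therefore split the argument according to whether $\wt X$ has a zero coordinate.

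First I treat the case where all $N$ coordinates of $\wt X$ are nonzero, hence distinct. Ordering them decreasingly and evaluating \eqref{eq5.B}, the value $D_Nf(\wt X)$ becomes a sum, over $i$ and over the two shift directions, of terms
\[
c_i^\pm(\wt X)\,\bigl(f(S_{q,i}^\pm\wt X)-f(\wt X)\bigr), \qquad c_i^\pm(\wt X):=\frac{S_{t,i}^\pm V_N}{V_N}\,\si_N^\pm(x_i),
\]
where $S_{q,i}^\pm\wt X$ denotes $\wt X$ with its $i$-th coordinate multiplied by $q^{\pm1}$. The crux is the claim that each $c_i^\pm(\wt X)$ is nonnegative and vanishes exactly when the move $S_{q,i}^\pm$ leaves $\wt\Om_N(q,t;a,b)$; granting this, minimality of $f$ at $\wt X$ gives $f(S_{q,i}^\pm\wt X)-f(\wt X)\ge0$ for every surviving term and the proposition follows. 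To prove the claim I factor $S_{t,i}^\pm V_N/V_N$ as a product of ratios over the remaining coordinates and read off the signs from the grid structure: for the inward shift $S_{q,i}^+$ of a positive coordinate $x_i$ the only factor that can vanish is $(x_it-x_{i+1})/(x_i-x_{i+1})$, which is $\ge0$ by the spacing bound \eqref{eq4.A} and vanishes exactly when $x_{i+1}=x_it$, i.e.\ precisely when raising the corresponding index would break monotonicity in \eqref{eq4.B}; all other factors are strictly positive, and $\si_N^+(x_i)>0$ by \eqref{eq5.A1} and \eqref{eq3.E1}. The outward shift $S_{q,i}^-$ is symmetric, the boundary vanishing at the extreme point $a^{-1}q$ now coming from $\si_N^-(a^{-1}q)=0$ via \eqref{eq5.A2} and \eqref{eq3.E3} rather than from the Vandermonde. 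The coordinates of the opposite sign are treated by the same sign analysis, using the vanishing $\si_N^-(b^{-1}q)=0$ from \eqref{eq3.E3} for the outermost negative particle. In every instance the vanishing of $c_i^\pm$ coincides exactly with the move exiting the grid, which is what makes the argument close.

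It remains to handle a minimizer $\wt X$ with $r\ge1$ zero coordinates; this is the main obstacle, and it is the $N$-particle counterpart of Variant 3 in the proof of Theorem \ref{thm3.A}. Here \eqref{eq5.B} cannot be used verbatim: $\si_N^\pm$ blow up like $x^{-2}$ at the origin, and when $r\ge2$ one also has $V_N(\wt X)=0$. My approach is to approach $\wt X$ inside $\wt\Om_N(q,t;a,b)$ by configurations obtained by displacing the zero coordinates to small distinct grid points, evaluate $D_Nf$ there by the now-legitimate formula \eqref{eq5.B}, and pass to the limit using continuity of the polynomial $D_Nf$. Minimality of $f$ at $\wt X$ is used exactly as in the one-particle case: since grid points accumulate at $0$ from both sides, moving a single zero coordinate to a small value of either sign forces the first-order term in that direction to vanish and the corresponding second-order term to be nonnegative — the analogues of $f_1=0$ and $f_2\ge0$. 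The resulting $x^{-1}$-singularities from $\si_N^\pm$ cancel against one another (as they must, $D_Nf$ being a polynomial), and the surviving finite contribution of each displaced particle is a positive multiple of the nonnegative second-order term, the multiple being the many-variable analogue of $(Dx^2)(0)=-(1-q)(1-q^2)/(ab)>0$; the contributions of the nonzero coordinates are nonnegative by the first case.

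The hard point, where the computation becomes genuinely long, is the subcase $r\ge2$: the displaced particles now interact through the Vandermonde factors $S_{t,i}^\pm V_N/V_N$, so their singular contributions no longer decouple and the finite limit is a quadratic form in the second-order derivatives of $f$ along the several zero-directions. One must choose the rates at which the zero coordinates tend to $0$, verify that all negative powers of the small parameters still cancel, and then show that the limiting form is nonnegative whenever the Hessian of $f$ along the zero-directions is positive semidefinite — which is exactly what minimality at $\wt X$ guarantees. Establishing this positivity in the presence of the coupling Vandermonde ratios is the technical heart of the argument; everything else reduces to the sign analysis of the first case and to the one-dimensional computation of Theorem \ref{thm3.A}.
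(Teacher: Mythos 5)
Your overall architecture matches the paper's: the case where all $N$ coordinates of $\wt X$ are nonzero is handled exactly as in the paper's Lemma \ref{lemma5.E} (sign analysis of the ratios $S_{t,i}^\pm V_N/V_N$ via the spacing bound \eqref{eq4.A}, positivity of $\si_N^+$, and the vanishing $\si^-_N(a^{-1}q)=\si^-_N(b^{-1}q)=0$), and the zero coordinates are treated by a limiting procedure, as in the paper. But there is a genuine gap in the zero-coordinate case. You assert that minimality of $f$ at $\wt X$ guarantees that the Hessian of $f$ along the zero directions is positive semidefinite, and you propose to prove nonnegativity of the limiting form only under that hypothesis. On the sparse set $\wt\Om_N(q,t;a,b)$ this hypothesis is simply not available: writing $f_2=c'_2p_{2,N}+c''_2e_{2,N}$, a local minimum at $0^N$ forces only $c'_2>0$ and $c''_2/c'_2\le(1+q)/\sqrt q$ (Lemma \ref{lemma5.B}), and since $(1+q)/\sqrt q>2$ this allows $f_2$ to be indefinite --- a quadratic form that is negative somewhere on $\R^N$ can still have a grid minimum at the origin because the admissible ratios $x_j/x_i$ are confined to $q$- and $t$-lattices. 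So the class of $f_2$ you must handle is strictly larger than the positive semidefinite cone, and your planned positivity statement would not cover minimizers with $2<c''_2/c'_2\le(1+q)/\sqrt q$. Closing this requires (a) the grid-adapted necessary condition itself (the lattice-gap computation of Lemma \ref{lemma5.B}) and (b) the explicit evaluation of the constant terms $\CT(D_Np_{2,N})$ and $\CT(D_Ne_{2,N})$ in \eqref{eq5.G1}--\eqref{eq5.G2}, after which the required inequality reduces to $(1+q)(t^{1-N}q^{-1}-1)\ge\frac{1+q}{\sqrt q}\,(t^{1-N}-1)$. Neither ingredient appears in your plan.

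A secondary point concerns the mixed case $\wt X=\{0^n\}\sqcup Y$ with $n,m>0$: you expect the displaced zero particles to remain coupled to the nonzero block through the Vandermonde ratios, so that the limit is a genuinely coupled quadratic form. In fact the coupling disappears: approaching $0^n$ along $x_i=\eps x_i'$ with distinct $x_i'$, the cross factors $\prod_j\bigl(\tfrac{x_it^{\pm1}-y_j}{x_i-y_j}\bigr)-1$ are $O(\eps)$, while the singular coefficients $\si_N^\pm(x_i)=O(\eps^{-2})$ meet increments $S^{\pm}_{q,x_i}g-g=O(\eps^{2})$ (since $g_1=0$), so the cross contribution \eqref{eq5.L} tends to $0$ and the zero block reduces to $D_ng(0^n)$ for the restricted polynomial $g$, with endpoint parameters rescaled to $\wt a,\wt b$. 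Identifying this decoupling is what makes the reduction to the pure case work; as written, the ``technical heart'' of your argument is left unproven, and the statement you propose to prove there is, as explained above, not strong enough.
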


The proof is divided into a series of lemmas. It is convenient for us to treat elements of $\wt\Om_N(q,t;a,b)$ as multisets (see the end of subsection \ref{sect4.2}) and then enumerate their points in some way; for instance, in the descending order. This allows us to  regard $\wt\Om_N(q,t;a,b)$ as a subset of $\R^N$. 

Every $f\in\Sym(N)$ can be expanded into homogeneous components of degrees $0,1,2,\dots$:
$$
f=f_0+f_1+f_2+\dots\,.
$$
Setting 
$$
p_{1,N}:=\sum_{i=1}^N x_i, \qquad p_{2,N}:=\sum_{i=1}^N x_i^2, \qquad e_{2,N}:=\sum_{1\le i<j\le N}x_ix_j,
$$
we can write
$$
f_0=c_0, \qquad f_1=c_1 p_{1,N}, \qquad f_2=c'_2 p_{2,N}+ c''_2 e_{2,N},
$$
where $c_0, c_1, c'_2, c''_2$ are some real constants. 

Observe that if $f$, viewed as a function on $\R^N$, has a local minimum at the origin, then $c_1=0$ and the quadratic form $f_2$ is nonnegatively definite. As is readily checked, the latter condition means that either $f_2=0$ (that is, $c'_2=c''_2=0$) or
\begin{equation}\label{eq5.D}
c'_2>0, \qquad -2\le \dfrac{c''_2}{c'_2}\le 2.
\end{equation}
 
However, we have to treat $f$ as a function on the subset $\wt\Om_N(q,t;a,b)\subset\R^N$, and then the condition on $f_2$ changes. The reason is  that the set $\wt\Om_N(q,t;a,b)$ is sparse; because of this it may well happen that a quadratic form, which is  non-degenerate and not positive definite, has a local minimum at the origin as a function on $\wt\Om_N(q,t;a,b)$.  

\begin{lemma}\label{lemma5.B}
Assume $f\in\Sym(N)$ has a local minimum on $\wt\Om_N(q,t;a,b)$ at the point 
$$
0^N:=(0,\dots,0) \quad \text{{\rm(}$N$ times{\rm)}}.
$$
Then one has:

{\rm(i)} $c_1=0$,

{\rm(ii)} either $f_2=0$ or the coefficients $c'_2, c''_2$ satisfy the inequalities
$$
c'_2>0, \qquad  - \, \max\left(\dfrac{1+q}{\sqrt q}, \, \dfrac{1+t^2}{t}\right)  \le \dfrac{c''_2}{c'_2}\le \dfrac{1+q}{\sqrt q}.
$$
\end{lemma}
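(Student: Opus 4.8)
The plan is to probe the local-minimum hypothesis with a short list of explicit configurations converging to the point $0^N$ inside $\wt\Om_N(q,t;a,b)$, and to read off constraints on $c_1,c'_2,c''_2$ from the leading-order behaviour of $f$ along each family. Write $f=c_0+c_1p_{1,N}+(c'_2p_{2,N}+c''_2e_{2,N})+(\text{terms of degree}\ge3)$. Since every configuration near $0^N$ has all of its nonzero coordinates of size $O(\eps)$, the components of degree $\ge3$ contribute only $O(\eps^3)$ and never affect the leading-order sign. I first take the one-particle families $(a^{-1}q^k,0,\dots,0)$ and $(b^{-1}q^l,0,\dots,0)$, for which $e_{2,N}=0$; along them $f-c_0=c_1\xi+c'_2\xi^2+O(\xi^3)$ with $\xi\to0^+$ in the first case and $\xi\to0^-$ in the second. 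The inequality $f\ge c_0$ thus forces $c_1\ge0$ and $c_1\le0$ respectively, giving (i), and, once $c_1=0$, it forces $c'_2\ge0$.

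Assume now $c_1=0$, so that $f_2$ controls the leading order. Using two particles of the same sign (legitimate because $N\ge2$), placed at indices $1$ and $2$ on the positive side, I have $x^+_2/x^+_1=q^{k_2-k_1}t$, which ranges over $(0,t]$ and equals $t$ when $k_1=k_2$. Writing this ratio as $r'$ gives $f_2=(x^+_1)^2\big(c'_2(1+r'^2)+c''_2r'\big)$, so the minimum condition yields the homogeneous inequality $c'_2(1+r'^2)+c''_2r'\ge0$ for every admissible $r'\in(0,t]$. Using instead one positive and one negative particle, with magnitudes $x=a^{-1}q^k$ and $w=|b|^{-1}q^l$, the cross term is negative, $e_{2,N}=-xw$, and with $r=x/w$ the minimum condition gives $c'_2(r^2+1)-c''_2r\ge0$ for every admissible $r$. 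The admissible ratios here are exactly $r=\tfrac{|b|}{a}q^{k-l}$, i.e.\ the geometric grid $\tfrac{|b|}{a}q^{\Z}$.

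With these two inequalities in hand I would first settle the strictness in (ii): if $c'_2=0$, the same-sign inequality forces $c''_2\ge0$ while the opposite-sign inequality forces $c''_2\le0$, hence $c''_2=0$ and $f_2=0$; so whenever $f_2\ne0$ we must have $c'_2>0$. Dividing the two inequalities by $c'_2>0$ turns them into $c''_2/c'_2\ge-(r'+1/r')$ and $c''_2/c'_2\le r+1/r$. Minimizing $r'+1/r'$ over $(0,t]$ at the endpoint $r'=t$ gives $c''_2/c'_2\ge-\tfrac{1+t^2}{t}$, which, as $\tfrac{1+t^2}{t}\le\max\big(\tfrac{1+q}{\sqrt q},\tfrac{1+t^2}{t}\big)$, already implies the claimed lower bound. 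For the upper bound I then need a single admissible $r$ with $r+1/r\le\tfrac{1+q}{\sqrt q}$.

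The crux, and the one genuinely non-routine step, is this last point. In the same-sign case the admissible ratios reach the clean value $t$, but in the opposite-sign case they are locked onto the discrete grid $\tfrac{|b|}{a}q^{\Z}$ fixed by the parameters, and one cannot expect to hit the continuous optimum $r=1$. The key elementary observation is that this grid has multiplicative spacing $q^{-1}$, exactly equal to the multiplicative width $q^{-1/2}/q^{1/2}=q^{-1}$ of the window $[q^{1/2},q^{-1/2}]$; passing to logarithms, an arithmetic progression with step $\ln(1/q)$ must meet any closed interval of length $\ln(1/q)$, so some admissible $r$ lies in $[q^{1/2},q^{-1/2}]$. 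On that window $r+1/r\le q^{1/2}+q^{-1/2}=\tfrac{1+q}{\sqrt q}$, which delivers $c''_2/c'_2\le\tfrac{1+q}{\sqrt q}$ and completes the proof.
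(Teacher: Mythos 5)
Your proposal is correct and follows essentially the same strategy as the paper: probe the minimum with one-particle and two-particle configurations, reduce to the sign of the binary quadratic $c'_2(1+y^2)\pm c''_2 y$ on the admissible multiplicative grids, and for the upper bound use the fact that a grid with multiplicative spacing $q^{-1}$ must meet the window $[q^{1/2},q^{-1/2}]$, on which $r+1/r\le(1+q)/\sqrt q$ (the paper phrases this dually, as the statement that if $c''_2/c'_2>(1+q)/\sqrt q$ the negative window between the two roots is wide enough to capture a grid point). The one genuine divergence is the lower bound: the paper runs the same root-interval analysis on the one-sided grid $\{q^kt:k\ge0\}$, which requires the extra condition $y_2<t$ and produces the $\max\bigl(\tfrac{1+q}{\sqrt q},\tfrac{1+t^2}{t}\bigr)$; you instead simply evaluate at the attainable ratio $r'=t$ and obtain the cleaner, strictly stronger necessary condition $c''_2/c'_2\ge-\tfrac{1+t^2}{t}$, which implies the stated bound. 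Since only the upper bound is used later, nothing is lost, and your treatment of the lower bound is a mild simplification.
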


Note that $(1+q)/{\sqrt q}>2$ because $0<q<1$. Thus, we get a weaker condition  than in \eqref{eq5.D}. For our purposes, we 
actually need only the upper bound on $c''_2/c'_1$.

\begin{proof}
Let us examine the case $N=2$; later it will be clear that the same argument works for $N\ge3$ as well. Thus, we write  $f=f(x_1,x_2)$. First of all, without lost of generality,  we may assume $c_0=0$, so that $f(0,0)=0$. 

Claim (i) is easy. Indeed, recall that according to our convention $x_1\ge x_2$. We have 
$$
f(x_1,0)=c_1 x_1+ O(|x_1|^2), \qquad f(0,x_2)=c_1 x_2+O(|x_2|^2).
$$
It follows that if $c_1<0$, then $f(x_1,0)$ takes negative values for small $x_1>0$, while if $c_1>0$, then $f(0,x_2)$ takes negative values for small $x_2<0$. In both cases, there is no local local minimum at $(0,0)$, hence $c_1=0$. 

We proceed to claim (ii). Now we have
$$
f(x_1,x_2)=c'_2(x_1^2+x_2^2)+c''_2x_1x_2 +O((|x_1|+|x_2|)^3).
$$
If both coefficients equal $0$, there is nothing to prove. Next, it is easy to exclude the cases $c'_2=0$ and $c'_2<0$. Indeed,  if $c'_2=0$, then taking both variables small and such that the sign of their product is opposite to that of $c''_2$ leads to contradiction. Next, if $c'_2<0$, then a contradiction arises when one variable is small and the other vanishes (so that the product $x_1x_2$ disappears).  Therefore,  $c'_2>0$.

Now, without loss of generality, we may assume $c'_2=1$. To simplify the notation, we also rename $c''_2=u$. We are going to prove the upper bound $u\le(1+q)/\sqrt q$. 

To do this we will examine the behavior of $f(x_1,x_2)$ near $(0,0)$ assuming $x_1>0>x_2$. Setting $y:=x_2/x_1$, we obtain
$$
f(x_1,x_2)=x_1^2(1+y^2+uy)+O(x_1^3(1+|y|)^3).
$$

The definition of $\wt\Om_2(q,t;a,b)$ imposes the following constraints on $x_1$ and $x_2$: 
$$
x_1\in\{a^{-1} q^m: m=1,2,\dots\}\subset\R_{>0}, \qquad x_2\in\{b^{-1}q^n: n=1,2,\dots\}\subset\R_{<0}.
$$
It follows that $y$ can take any value from the $q$-lattice 
$$
Y_-:=\{ab^{-1} q^k: k\in\Z\}\subset\R_{<0}
$$ 
provided $x_1$ is small enough. 

Therefore, it suffices to show that if $u>(1+q)/\sqrt q$, then there exists a point $y\in Y_-$ such that $1+y^2+uy<0$; then this will lead to a contradiction. 

Since $u>2$, the polynomial $1+y^2+uy$  has two real roots $y_1<y_2$ on $\R_{<0}$:
$$
y_1:= -\, \frac{u+\sqrt{u^2-4}}{2}, \qquad y_2:= -\, \frac{u-\sqrt{u^2-4}}{2}.
$$
In the interval $(y_1,y_2)$, the polynomial takes negative values. This interval must contain a point of $Y_-$ as soon $\log (|y_1|/|y_2|)>\log(1/q)$, which means 
$$
\frac{|y_2|}{|y_1|}<q
$$
This inequality can be written as
$$
u-\sqrt{u^2-4}< (u+\sqrt{u^2-4})q
$$
or else as
$$
u(1-q)<\sqrt{u^2-4}\,(1+q),
$$
which is equivalent to the inequality  $u>(1+q)/\sqrt q$.  This completes the derivation of the upper bound.

The lower bound is obtained by the same method. Here we take $x_1>x_2>0$ and use the fact the ratio $y:=x_2/x_1$ ranges over the set 
\begin{equation}\label{eq5.E}
Y_+:=\{q^k t: k=0,1,2,\dots\}.
\end{equation}
Then we have to find a condition on $u<-2$ under which there is a point of $Y_+$ inside the interval $(y_2,y_1)\subset\R_{>0}$ between the two roots, 
where the polynomial $1+y^2+uy$ takes negative values. The roots are  
$$
y_2:=\frac{|u|-\sqrt{u^2-4}}{2}, \qquad y_1:=\frac{|u|+\sqrt{u^2-4}}{2},
$$

Again, the set $Y_+$ must be dense enough, so we again impose the inequality $\log(y_1/y_2)>\log(1/q)$, equivalent to $|u|>(1+q)/\sqrt q$. However, this does not suffice: the reason is that the parameter $k$ in \eqref{eq5.E} ranges over $\Z_{\ge0}$ and not over $\Z$, as before. To guarantee that $Y_+$ intersects the interval we need to impose the additional constraint $y_2< t$, which is equivalent to $|u|>(1+t^2)/t$. In this way one gets the lower bound of the lemma. 

Finally, for $N\ge 3$ the arguments are the same. The only difference is that for the upper bound, we set $x_2=\dots=x_{N-1}=0$ and work with $x_1$ and $x_N$, while for the lower bound, we work with $x_1$ and $x_2$ assuming $x_3=\dots=x_N=0$. 
\end{proof}

In the next lemma we compute the constant terms of the symmetric polynomials $D_N p_{2,N}$ and $D_N e_{2,N}$. Below $\CT(\cdot)$ denotes the constant term of a polynomial.

\begin{lemma}\label{lemma5.C}
Set
\begin{equation}\label{eq5.J}
C_N(q,t):=\frac{q t^{N-1}}{a|b|} \,(1-q)\frac{1-t^N}{1-t}.
\end{equation}
{\rm(i)} The constant term $\CT(D_N p_{2,N})$ is positive and  equals 
\begin{equation}\label{eq5.G1}
C_N(q,t) (1+q)  (t^{1-N}q^{-1}-1).
\end{equation}

{\rm(ii)} The constant term $\CT(D_N e_{2,N})$ is negative and  equals 
\begin{equation}\label{eq5.G2}
-\, C_N(q,t) (t^{1-N}-1).
\end{equation}
\end{lemma}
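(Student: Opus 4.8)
The plan is to obtain both constant terms by one device: extract from $D_Nf$ only its component that is homogeneous of degree $0$. Each ratio $R_i^\pm:=S_{t,i}^\pm V_N/V_N$ is a homogeneous rational function of degree $0$, and cancelling the common factors of $V_N$ gives the uniform expressions
\[
R_i^+=\prod_{j\ne i}\frac{tx_i-x_j}{x_i-x_j},\qquad R_i^-=\prod_{j\ne i}\frac{t^{-1}x_i-x_j}{x_i-x_j}.
\]
Since $D_N$ preserves $\Sym(N)$ (Lemma \ref{lemma5.A}), $D_Nf$ is a genuine polynomial, so its degree-$0$ homogeneous component is the constant $\CT(D_Nf)$. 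Hence it suffices, inside each summand of \eqref{eq5.B}, to collect the terms of homogeneous degree $0$ and to check that, after summing over $i$, they add up to a constant. I will repeatedly use the two rational-function identities
\[
\sum_iR_i^+=\frac{1-t^N}{1-t},\qquad \sum_iR_i^-=t^{1-N}\,\frac{1-t^N}{1-t},\qquad \sum_i\frac{R_i^\pm}{x_i}=\sum_j\frac1{x_j},
\]
all of which follow by applying the residue theorem to $z^{-1}\Phi(z)$ and $z^{-2}\Phi(z)$, where $\Phi(z):=\prod_{j}\tfrac{tz-x_j}{z-x_j}$, controlling the residues at $z=0$ and $z=\infty$ (the first identity is also the classical evaluation of the first Macdonald operator on the constant function $1$; the $R^-$ versions come from $t\mapsto t^{-1}$, and the third identity is notably independent of $t$).

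I would treat $p_{2,N}$ first. Here $(S_{q,i}^\pm-1)p_{2,N}=(q^{\pm2}-1)x_i^2$, and by \eqref{eq5.A1}–\eqref{eq5.A2} the products $\si_N^+(x_i)x_i^2$ and $\si_N^-(x_i)x_i^2$ are honest polynomials in $x_i$ with constant terms $-qt^{N-1}/(ab)$ and $-q^2t^{N-1}/(ab)$. Only these constant terms, multiplied by the degree-$0$ factor $R_i^\pm$, contribute to degree $0$, so
\[
\CT(D_Np_{2,N})=-\frac{qt^{N-1}}{ab}(q^2-1)\sum_iR_i^+-\frac{q^2t^{N-1}}{ab}(q^{-2}-1)\sum_iR_i^-.
\]
Inserting the first two identities above and simplifying with $ab=-a|b|$ collapses this to $(1+q)\,C_N(q,t)\,(t^{1-N}q^{-1}-1)$, which is \eqref{eq5.G1}; it is positive because $t^{1-N}q^{-1}>1$ and $C_N(q,t)>0$.

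For $e_{2,N}$ the mechanism is the same but one degree lower: $(S_{q,i}^\pm-1)e_{2,N}=(q^{\pm1}-1)\,x_i\sum_{j\ne i}x_j$, and the degree-$0$ part of $\si_N^+(x_i)x_i$ (resp.\ $\si_N^-(x_i)x_i$) is the single monomial $-qt^{N-1}/(ab)\cdot x_i^{-1}$ (resp.\ $-q^2t^{N-1}/(ab)\cdot x_i^{-1}$). Thus the whole computation reduces to evaluating the degree-$0$ rational functions $\Sigma^\pm:=\sum_iR_i^\pm\,x_i^{-1}\sum_{j\ne i}x_j$. Writing $\sum_{j\ne i}x_j=p_{1,N}-x_i$ splits this as $\Sigma^\pm=p_{1,N}\sum_iR_i^\pm x_i^{-1}-\sum_iR_i^\pm$, and the third identity above gives $\Sigma^\pm=p_{1,N}\sum_jx_j^{-1}-\sum_iR_i^\pm$. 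The only non-constant piece, $p_{1,N}\sum_jx_j^{-1}$, is therefore \emph{the same for both signs}.

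Finally I would assemble. The coefficients attached to $\Sigma^+$ and $\Sigma^-$ are $(q-1)(-qt^{N-1}/ab)$ and $(q^{-1}-1)(-q^2t^{N-1}/ab)$, and a one-line check shows their sum is $0$. This forces the non-constant term $p_{1,N}\sum_jx_j^{-1}$ to cancel — consistent with $D_Ne_{2,N}$ being a polynomial — and leaves
\[
\CT(D_Ne_{2,N})=(q-1)\Bigl(-\frac{qt^{N-1}}{ab}\Bigr)\Bigl(\sum_iR_i^--\sum_iR_i^+\Bigr)=(q-1)\Bigl(-\frac{qt^{N-1}}{ab}\Bigr)\frac{1-t^N}{1-t}\,(t^{1-N}-1),
\]
which simplifies to $-C_N(q,t)(t^{1-N}-1)$, namely \eqref{eq5.G2}, and is negative since $t^{1-N}-1>0$. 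The step I expect to be the main obstacle is the reciprocal-moment identity $\sum_iR_i^\pm x_i^{-1}=\sum_jx_j^{-1}$: unlike $\sum_iR_i^\pm$, it is not a standard Macdonald evaluation, yet it is precisely what makes the troublesome term $p_{1,N}\sum_jx_j^{-1}$ identical for both signs so that it cancels. Establishing it cleanly via the residue theorem applied to $z^{-2}\prod_j(tz-x_j)/(z-x_j)$ — in particular verifying that the residue at $\infty$ vanishes while the residue at $0$ produces exactly $(1-t)\sum_jx_j^{-1}$ — is the crux of the computation.
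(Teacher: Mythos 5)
Your proof is correct; I checked the three summation identities (all three hold, e.g.\ by the residue argument you sketch, or by degree count plus evaluation at $x_i\to\infty$ and the poles), and both final simplifications reproduce \eqref{eq5.G1} and \eqref{eq5.G2} exactly, including the signs. The reduction you start from --- that only the degree-$(-2)$ coefficient of $\si_N^\pm(x_i)$ can contribute to the degree-$0$ homogeneous component, since $R_i^\pm$ has degree $0$ and $(S_{q,i}^\pm-1)\phi$ is homogeneous of degree $2$ --- is precisely the paper's first step (the passage from $D_N$ to $\wt D_N$). Where you diverge is in evaluating the resulting degree-$0$ rational expression: the paper writes $V_N=\Anti_N\x^\de$, uses $S_{t,i}^\pm\x^\de=t^{\pm(N-i)}\x^\de$ to extract the coefficient of $\x^\de$, and for $e_{2,N}$ must track which monomials $\x^{\de\pm(\eps_k-\eps_l)}$ survive antisymmetrization (only $l=k+1$ in one sum; the other sum dies because its sole surviving coefficient vanishes). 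You instead stay with the ratios $R_i^\pm$ and invoke the closed forms $\sum_iR_i^\pm$ together with the reciprocal-moment identity $\sum_iR_i^\pm x_i^{-1}=\sum_jx_j^{-1}$; the cancellation of the non-constant piece $p_{1,N}\sum_jx_j^{-1}$ in your version plays exactly the role of the paper's killed monomials. Your route buys a cleaner conceptual explanation of why the answer is a constant at all (and the $t$-independence of the reciprocal-moment identity is a nice structural point); the paper's antisymmetrization bookkeeping is more self-contained in that it needs no identity beyond $V_N=\Anti_N\x^\de$, but is harder to scan. Do make sure to actually prove the reciprocal-moment identity in the write-up, since as you note it is the one non-standard ingredient; the residue computation you outline (vanishing residue at $\infty$ because $w^{-2}F(1/w)=\prod_j(t-wx_j)/(1-wx_j)$ is regular at $w=0$, residue $(1-t)\sum_jx_j^{-1}$ at $0$) does close it.
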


\begin{proof}
Introduce some notation. Let $\wt D_N$ denote the operator obtained from $D_N$ by replacing the rational functions $\si_N^+(x_i)$ and $\si_N^-(x_i)$ from \eqref{eq5.A1} and \eqref{eq5.A2} with their terms of degree $-2$, which are, respectively,
$$
-\, \frac{q t^{N-1}}{ab}\cdot \frac1{x_i^2}=\frac{q t^{N-1}}{a|b|}\cdot \frac1{x_i^2} \qquad \text{and} \qquad - \frac{q^2t^{N-1}}{ab}\cdot \frac1{x_i^2}=\frac{q^2t^{N-1}}{a|b|}\cdot \frac1{x_i^2}.
$$

Next, let $\de:=(N-1,N-2,\dots,0)$ and
$$
\x^\de:=x_1^{N-1}x_2^{N-2}\dots x_{N-1}
$$
be the leading monomial in $V_N$. 

Finally, let $\mathfrak S_N$ be the $N$th symmetric group and $\Anti_N$ be the antisymmetrization operator with respect to the variables $x_1,\dots,x_N$; it acts on a function $F(x_1,\dots,x_N)$ by 
$$
\Anti_N F(x_1,\dots,x_N):=\sum_{s\in\mathfrak S_N}\sgn(s) F(x_{s(1)},\dots,x_{s(N)}),
$$
so that
$$
V_N=\Anti_N \x^\de.
$$

Given an arbitrary symmetric homogeneous polynomial $\phi(x_1,\dots,x_N)$ of degree $2$, we have
$$
\CT(D_N \phi)=\wt D_N \phi= \frac{q t^{N-1}}{a|b|} \frac1{V_N} \sum_{i=1}^N
\left((S_{t,i}^+ V_N)\frac1{x_i^2}\cdot(S_{q,i}^+-1)\phi
+q(S_{t,i}^- V_N)\frac1{x_i^2}\cdot(S_{q,i}^--1)\phi\right)
$$
Because $\phi$ is symmetric, this is equal to the coefficient of $\x^\de$ in 
$$
\frac{q t^{N-1}}{a|b|} \,\Anti_N\left\{\sum_{i=1}^N
\left((S_{t,i}^+ \x^\de)\frac1{x_i^2}\cdot(S_{q,i}^+-1)\phi
+q(S_{t,i}^- \x^\de)\frac1{x_i^2}\cdot(S_{q,i}^--1)\phi\right)\right\}.
$$
Next, 
$$
S_{t,i}^\pm \x^\de=t^{\pm(N-i)}\x^\de.
$$
Therefore, our expression can be rewritten as
\begin{equation}\label{eq5.F}
\frac{q t^{N-1}}{a|b|} \,\Anti_N\left\{\sum_{i=1}^N
\left(t^{N-i}\x^{\de-2\eps_i}\cdot(S_{q,i}^+-1)\phi
+q t^{i-N}\x^{\de-2\eps_i}\cdot(S_{q,i}^--1)\phi\right)\right\},
\end{equation}
where $\eps_i:=(0,\dots,0,1,0,\dots,0)$ with `$1$' at the $i$th position, so that
$$
\x^{\de-2\eps_i}=\frac{\x^\de}{x_i^2}.
$$

Now we specialize \eqref{eq5.F} to $f=p_{2,N}$. Then 
$$
(S^\pm_{q,i}-1)p_{2,N}=(q^{\pm 2}-1)x_i^2.
$$
Therefore, \eqref{eq5.F} turns into
$$
\frac{q t^{N-1}}{a|b|} \,\Anti_N\left\{\left(\sum_{i=1}^N
\left(t^{N-i}(q^2-1)
+t^{i-N}q(q^{-2}-1)\right) \right)\x^\de\right\},
$$
In this expression, the coefficient of $\x^\de$ equals
$$
\frac{q t^{N-1}}{a|b|} \, \sum_{i=1}^N
\left(t^{N-i}(q^2-1)
+t^{i-N}q(q^{-2}-1)\right).
$$
Summing the two geometric progressions we obtain
$$
\frac{q t^{N-1}}{a|b|} \,(1-q^2)\left(- \frac{1-t^N}{1-t} + q^{-1} \frac{t^{-N}-1}{t^{-1}-1}\right),
$$
which equals \eqref{eq5.G1}, as desired.

Next, we specialize \eqref{eq5.F} to 
$$
\phi=e_{2,N}=\sum_{1\le k<l\le N}x_kx_l.
$$ 
Then the expression in the curly brackets in \eqref{eq5.F} becomes a triple sum over $k<l$ and $i$. However, a nonzero contribution can  arise only from $i=k$ and $i=l$. Indeed, this follows from the fact that $(S^\pm_{q,i}-1)x_kx_l$ vanishes if $i$ is distinct from $k$ and $l$. 

Therefore, the triple sum actually reduces to two double sums corresponding to $i=k$ and $i=l$. Explicitly, the expression in the curly bracket takes the form
\begin{equation}\label{eq5.H}
\begin{aligned}
& \sum_{1\le k<l\le N}
\left(t^{N-k}(q-1) + 
t^{k-N}(1-q)\right)\x^{\de -\eps_k+\eps_l}\\
+ & \sum_{1\le k<l\le N}
\left(t^{N-l}(q-1) + 
t^{l-N}(1-q)\right)\x^{\de+ \eps_k-\eps_l},
\end{aligned}
\end{equation}
where the factor $1-q$ comes from $q(q^{-1}-1)$. 

The resulting expression is a linear combination of Laurent monomials. However, after antisymmetrization all monomials containing coinciding superscripts will be killed. 

Examine the first sum in \eqref{eq5.H}. Here coinciding superscrips arise whenever $l-k\ge2$, so that we keep only the pairs of indices with $l=k+1$. Their contribution is
\begin{equation}\label{eq5.I}
\left(\sum_ {k=1}^{N-1}
\left(t^{N-k}(q-1)+ 
t^{k-N}(1-q)\right)\right)\x^{\de -\eps_k+\eps_{k+1}}
\end{equation}

Examine now the second sum in \eqref{eq5.H}. Here the only monomial $\x^{\de+\eps_k-\eps_l}$ without coinciding superscripts is the one with $k=1$ and $l=N$. But for $l=N$, the corresponding coefficient vanishes, because
$$
(t^{N-l}(q-1) + t^{l-N}(1-q))\big|_{l=N}=(q-1)+(1-q)=0.
$$ 
Thus, the second sum gives no contribution at all, so that we are left with \eqref{eq5.I}.

Next, we observe that 
$$
\Anti_N \x^{\de -\eps_k+\eps_{k+1}}=-\Anti_N\x^\de=-V_N.
$$
for each $k=1,\dots,N-1$. Indeed, this holds because $\x^{\de -\eps_k+\eps_{k+1}}$ is obtained from $\x^\de$ be switching the variables $x_k$ and $x_{k+1}$. 

It follows that
$$
\CT(D_N e_{2,N})=- \, \frac{q t^{N-1}}{a|b|} \left(\sum_ {k=1}^{N-1}
\left(t^{N-k}(q-1)+ t^{k-N}(1-q)\right)\right).
$$
Computing the sum we obtain \eqref{eq5.G2}. 
\end{proof}

The next lemma is a corollary of Lemmas \ref{lemma5.B} and \ref{lemma5.C}.

\begin{lemma}\label{lemma5.D}
Assume $f\in\Sym(N)$ has a local minimum on $\wt\Om_N(q,t;a,b)$ at the point $0^N$. Then $D_N f(0^N)\ge0$.
\end{lemma}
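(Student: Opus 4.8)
The plan is to reduce the claim to the constant terms already computed in Lemma \ref{lemma5.C}. Since $D_N f(0^N)$ is simply the constant term $\CT(D_N f)$ of the polynomial $D_N f$, and $D_N$ annihilates constants, the first task is to determine which homogeneous components of $f=f_0+f_1+f_2+\dots$ can contribute to it. Expanding each $\si_N^\pm(x_i)$ as a polynomial in $x_i^{-1}$, its lowest-degree term is the one of degree $-2$, while the remaining factors $S^\pm_{t,i}V_N/V_N$ (homogeneous of degree $0$) and $S^\pm_{q,i}-1$ preserve degree. Hence $D_N$ lowers total degree by at most $2$: for homogeneous $\phi$ of degree $m$, the polynomial $D_N\phi$ has homogeneous components only in degrees $\{m-2,m-1,m\}\cap\Z_{\ge0}$. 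In particular $\CT(D_N f_m)=0$ for every $m\ge3$, and $\CT(D_N f_0)=0$, so that $\CT(D_N f)=\CT(D_N f_1)+\CT(D_N f_2)$.

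Next I would invoke Lemma \ref{lemma5.B}(i), which gives $c_1=0$, hence $f_1=0$, leaving $\CT(D_N f)=\CT(D_N f_2)=c'_2\,\CT(D_N p_{2,N})+c''_2\,\CT(D_N e_{2,N})$. Substituting the explicit values \eqref{eq5.G1} and \eqref{eq5.G2} and recalling $C_N(q,t)>0$, one has $\CT(D_N p_{2,N})>0$ and $\CT(D_N e_{2,N})<0$. If $f_2=0$ the conclusion is immediate, so assume $f_2\ne0$; then Lemma \ref{lemma5.B}(ii) applies, giving $c'_2>0$ together with the upper bound $u:=c''_2/c'_2\le(1+q)/\sqrt q$.

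It remains to show $c'_2\bigl[\CT(D_N p_{2,N})+u\,\CT(D_N e_{2,N})\bigr]\ge0$. Since $c'_2>0$, it suffices to bound the bracket from below. As $\CT(D_N e_{2,N})<0$, the bracket is strictly decreasing in $u$, so over the admissible range $u\le(1+q)/\sqrt q$ it attains its minimum at the endpoint $u=(1+q)/\sqrt q$. Plugging this in and factoring out $C_N(q,t)(1+q)>0$, the bracket becomes $C_N(q,t)(1+q)(1-\sqrt q)\bigl(t^{1-N}q^{-1}+q^{-1/2}\bigr)$, which is strictly positive because $0<q<1$, $0<t<1$, and $N\ge2$. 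This settles the remaining case and completes the proof.

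The main point is the degree count that isolates the quadratic component $f_2$: one must argue that, although the individual summands of $D_N$ are merely rational, the polynomial $D_N\phi$ inherits only the degrees $\{m-2,m-1,m\}$, so no constant term survives from degrees $m\ge3$. After that, the crucial observation — and the reason only the upper bound from Lemma \ref{lemma5.B}(ii) is needed — is that the relevant quantity is monotone in $u=c''_2/c'_2$, reducing everything to the single endpoint identity above.
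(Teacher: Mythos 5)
Your proposal is correct and follows essentially the same route as the paper: reduce $D_Nf(0^N)$ to $\CT(D_Nf_2)$ via the degree count and Lemma \ref{lemma5.B}(i), then combine Lemma \ref{lemma5.B}(ii) with the constant terms \eqref{eq5.G1}, \eqref{eq5.G2}, using only the upper bound on $u=c''_2/c'_2$ since $\CT(D_Ne_{2,N})<0$. Your explicit factorization $C_N(q,t)(1+q)(1-\sqrt q)\bigl(t^{1-N}q^{-1}+q^{-1/2}\bigr)$ at the endpoint is just an algebraic rearrangement of the paper's final inequality $t^{1-N}(q^{-1/2}-1)\ge q^{1/2}-1$, so there is no substantive difference.
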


\begin{proof}
Expand $f$ on homogeneous components, $f=\sum_{k\ge0}f_k$. From Lemma \ref{lemma5.B} we know that $f_1=0$. Next, $D_N$ decreases degree at most by $2$. Therefore, $D_Nf_k(0^N)=0$ for all $k\ge3$, so that 
$$
D_Nf(0^N)=D_Nf_2(0^N)=\CT(D_N f_2).
$$
If $f_2=0$, then this quantity equals $0$. Assume $f_2\ne0$ and write $f_2=c'_2 p_{2,N}+c''_2 e_{2,N}$, as in Lemma \ref{lemma5.C}. From this lemma we know that $c'_2>0$ and $u:=c''_2/c'_2 $ satisfies the inequality $u\le(1+q)/\sqrt q$. It suffices to prove that this inequality implies
$$
\CT(D_N p_{2,N})+u \CT(D_Ne_{2,N})\ge0.
$$
The two constant terms $\CT(\cdot)$ are computed in Lemma \ref{lemma5.C}, formulas \eqref{eq5.G1} and \eqref{eq5.G2}.  The first term is positive and the second term is negative. The common factor $C_N(q,t)>0$ defined in \eqref{eq5.J} is strictly positive. Therefore, by virtue of \eqref{eq5.G1} and \eqref{eq5.G2}, the desired inequality is reduced to the following one:
$$
(1+q)(t^{1-N}q^{-1}-1)\ge \frac{1+q}{\sqrt q}(t^{1-N}-1).
$$
But this is further reduced to
$$
t^{1-N}(q^{-1/2}-1)\ge q^{1/2}-1.
$$
The latter inequality holds true because the left hand side is positive while the right-hand side is negative.
\end{proof}

Lemma \ref{lemma5.D} proves the claim of Proposition \ref{prop5.A} in the particular case when $\wt X=0^N$. In the next lemma we consider another particular case.

\begin{lemma}\label{lemma5.E}
Assume $f\in\Sym(N)$ attains its minimum on $\wt\Om_N(q,t;a,b)$ at a configuration $Y\in\Om_N(q,t;a,b)\subset \wt\Om_N(q,t;a,b)$. Then $D_N f(Y)\ge0$. 
\end{lemma}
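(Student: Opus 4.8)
The plan is to evaluate $D_N f$ at $\wt X = Y$ by rewriting it as an explicit weighted sum of one-particle increments and then to argue from minimality, just as in Variants~1--3 of the proof of Theorem~\ref{thm3.A}, the new feature being the interparticle factors coming from the Vandermonde $V_N$. Enumerate the $N$ coordinates of $Y$ in any fixed way as $(y_1,\dots,y_N)$. Since $Y\in\Om_N(q,t;a,b)$, all $y_i$ are nonzero, and \eqref{eq4.A} makes the same-sign coordinates strictly decreasing in absolute value, so the $y_i$ are pairwise distinct and $V_N(Y)\ne0$. Consequently \eqref{eq5.B} specializes to
\begin{equation*}
D_N f(Y)=\sum_{i=1}^N\Bigl(w_i^+\bigl(f(S_{q,i}^+Y)-f(Y)\bigr)+w_i^-\bigl(f(S_{q,i}^-Y)-f(Y)\bigr)\Bigr),\qquad w_i^\pm:=\left.\frac{S_{t,i}^\pm V_N}{V_N}\right|_{Y}\si_N^\pm(y_i),
\end{equation*}
where $S_{q,i}^\pm Y$ is $Y$ with its $i$th coordinate multiplied by $q^{\pm1}$. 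I would then establish two facts: \textbf{(a)} every weight $w_i^\pm$ is nonnegative; and \textbf{(b)} whenever the shifted multiset $S_{q,i}^\pm Y$ fails to lie in $\wt\Om_N(q,t;a,b)$, the corresponding weight vanishes. Granting (a) and (b), the lemma is immediate: each surviving term pairs a nonnegative weight with an increment $f(S_{q,i}^\pm Y)-f(Y)\ge0$ coming from minimality of $Y$ on $\wt\Om_N(q,t;a,b)$, while the remaining terms are $0$.

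For the factor $\si_N^\pm$ I would invoke \eqref{eq3.E1}--\eqref{eq3.E3}: $\si_N^+(y_i)=t^{N-1}\si^+(y_i)>0$ at every lattice point, whereas $\si_N^-(y_i)\ge0$, with equality exactly at the two outermost points $a^{-1}q$ and $b^{-1}q$. For the Vandermonde factor a short order-independent computation gives
\begin{equation*}
\left.\frac{S_{t,i}^\pm V_N}{V_N}\right|_{Y}=\prod_{j\ne i}\frac{t^{\pm1}y_i-y_j}{y_i-y_j}.
\end{equation*}
The crucial input is again \eqref{eq4.A}, which bounds the ratio of two consecutive same-sign coordinates by $t$. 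Sorting the factors according to the sign of $y_j$ and its position relative to $y_i$, I would check that each factor is $\ge0$, so that the product is $\ge0$; the only factor that can vanish is the one attached to the same-sign neighbor sitting at the extremal ratio exactly $t$ --- the inward neighbor for the $+$ shift, the outward neighbor for the $-$ shift. This proves (a) and at the same time locates the zero set of the Vandermonde weight.

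It remains to match these zeros with the boundary of $\wt\Om_N(q,t;a,b)$, which is step (b) and the real crux. Write a positive coordinate as $y_i=a^{-1}q^{k}t^{s-1}$, where $s$ is its rank among the positive particles and $k=k_s\ge1$ is its lattice exponent. The toward-$0$ shift ($+$) raises $k_s$ to $k_s+1$ and never leaves the lattice; it breaks monotonicity of the exponent sequence precisely when $k_{s+1}=k_s$, that is, when the inward neighbor lies at ratio $t$ --- exactly where the $+$ Vandermonde factor vanishes. The away-from-$0$ shift ($-$) lowers $k_s$ to $k_s-1$ and can fail either through $k_{s-1}=k_s$ (monotonicity lost), which is again exactly where the $-$ Vandermonde factor vanishes, or through $k_s=1$ (the particle pushed off the lattice). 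The delicate case is the last one: if $s\ge2$ and $k_s=1$, then $1\le k_{s-1}\le k_s=1$ forces $k_{s-1}=k_s$, so the $-$ Vandermonde factor already vanishes; and if $s=1$ and $k_1=1$, the particle sits at $a^{-1}q$, where $\si_N^-$ vanishes by \eqref{eq3.E3}. Thus every invalid away-from-$0$ shift annihilates $w_i^-$ through one of the two mechanisms. Negative coordinates are treated identically, with $b^{-1}q$ in the role of $a^{-1}q$. I expect this coordinated vanishing --- the Vandermonde factor absorbing the monotonicity constraints, $\si_N^-$ absorbing the single genuine lattice endpoint, and the two overlapping at inner endpoints --- to be the main obstacle, since it is exactly where the algebraic form of $D_N$ must dovetail with the sparse geometry of $\wt\Om_N(q,t;a,b)$.
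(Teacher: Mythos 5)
Your proof is correct and follows essentially the same route as the paper's: the same term-by-term decomposition of $D_Nf(Y)$, the same nonnegativity of the Vandermonde ratios via the ratio-$\le t$ condition \eqref{eq4.A}, the positivity of $\si_N^+$, and the vanishing of $\si_N^-$ at the endpoints $a^{-1}q$ and $b^{-1}q$. Your step (b) --- verifying that every $q$-shift which takes $Y$ out of $\wt\Om_N(q,t;a,b)$ is paired with a vanishing weight --- is left implicit in the paper (which simply asserts $S_{q,i}^\pm f(Y)-f(Y)\ge0$ from minimality), so your write-up is, if anything, the more careful one.
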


\begin{proof}
Write $Y=\{y_1,\dots,y_N\}$. Because $Y\in\Om_N(q,t;a,b)$, all $y_i$'s are pairwise distinct and we can use formula \eqref{eq5.B} directly:
$$
D_N f(Y)=\sum_{i=1}^N\left(\frac{S_{t,i}^+ V_N(Y)}{V_N(Y)}\si_N^+(y_i)(S_{q,i}^+f(Y)-f(Y))+\frac{S_{t,i}^- V_N(Y)}{V_N(Y)}\si^-_N(y_i)(S_{q,i}^-f(Y)-f(Y))\right). 
$$
For definiteness, we may assume $y_1>\dots >y_N$. 

Let us examine the signs of all terms. 

\emph{Step}\/ 1. Let us show that for all $i=1,\dots,N$ 
$$
\frac{S_{t,i}^\pm V_N(Y)}{V_N(Y)}\ge0.
$$
Indeed, since $y_1>\dots>y_N$, we have $V_N(Y)>0$ and the sign of the numerator is that of the expression
$$
\prod_{k<i}(y_k-y_i t^{\pm1})\cdot \prod_{l>i}(y_i t^{\pm1}-y_l).
$$
It suffices to check that all factors are nonnegative, which reduces to the inequalities 
$$
y_{i-1}-y_it^{-1}\ge0, \quad \text{for $i\ge2$}; \qquad y_i t-y_{i+1}\ge0, \quad \text{for $i\le N-1$}.
$$
But these inequalities hold by the definition of the set $\Om_N(q,t;a,b)$: in the logarithmic scale, the distances between the particles are no less than $\log(1/t)$. 

\emph{Step}\/ 2. Next, for all $i=1,\dots,N$ we have 
$$
\si_N^+(y_i)>0, \qquad S_{q,i}^\pm f(Y)-f(Y)\ge0. 
$$
Indeed, the first inequality follows from the definition of $\si_N^+(\cdot)$ (see \eqref{eq5.A1} and the constraints on $(\abcd)$), and the second inequality holds due to our assumption on $f$. Consequently, 
$$
\si_N^+(y_i)(S_{q,i}^+ f(Y)-f(Y))\ge0, \quad i=1,\dots,N. 
$$

\emph{Step}\/ 3. Finally, we also have 
$$
\si_N^-(y_i)(S_{q,i}^-f(Y)-f(Y))\ge0, \quad i=1,\dots,N.
$$
Here the argument is similar, with the only exception of the case when $i=1$ and $y_1=a^{-1}q$ or when $i=N$ and  $y_N=b^{-1}q$. But then we use the fact that 
$$
\si^-_N(a^{-1}q)=\si^-_N(b^{-1}q)=0,
$$
which follows from \eqref{eq5.A2}.

We conclude that the whole expression for $D_N f(Y)$ is nonnegative. 
\end{proof}

\begin{proof}[End of proof of Proposition \ref{prop5.A}]
In the last two lemmas, we have examined two particular cases, $\wt X=0^N$ and $\wt X=Y\in\Om_N(q,t;a,b)$. Assume now that $\wt X$ is not in $\Om_N(q,t;a,b)$ and  is distinct from $0^N$. This means that $\wt X$ can be represented in the following form:
$$
\wt X=\{0^n\}\sqcup Y, \quad \text{where} \quad Y\in\Om_m(q,t;a,b), \; n>0, \; m>0,\; n+m=N. 
$$ 

Write $Y=\{y_j: 1\le j\le m\}$ and let $X^0=\{x_i: 1\le i\le n\}$ be an $n$-tuple of variables. We also consider the $N$-tuple 
$$
X=X^0\sqcup Y:=\{x_1,\dots,x_n, y_1,\dots,y_m\}.
$$
We have
\begin{equation}
D_N f(\wt X)=\lim_{X^0\to 0^n} D_N f(X),
\end{equation}
where $X^0\to0^n$ means that all $x_i$'s go to $0$. 

We split the operator $D_N$ into two parts,
$$
D_N=D_{X^0}+D_{Y},
$$
where $D_{X^0}$ and $D_Y$ assemble the terms containing the $q$-shifts on $x_i$'s and on $y_j$'s, respectively. To distinguish between the two groups of variables, below we use a more detailed notation for the shift operators. 

According to the splitting of $D_N$ we have
$$
D_N f(X)=D_{X^0}f(X)+D_Y f(X).
$$
We are going to show that both $D_{X^0}f(X)$ and $D_Y f(X)$ have limits as $X^0\to 0^n$ and these limit values are nonnegative. This will prove the proposition. 

Examine $D_Y f(X)$ first. We have
\begin{align*}
D_Y f(X)=&\sum_{j=1}^m\left(\prod_{i=1}^n \frac{y_jt-x_i}{y_j-x_i}\right)\left(\prod_{r:\, r\ne j}\frac{y_jt-y_r}{y_j-y_r}\right) \si^+_N(y_j) (S^+_{q,y_j}f(X)-f(X))\\
+&\sum_{j=1}^m\left(\prod_{i=1}^n \frac{y_jt^{-1}-x_i}{y_j-x_i}\right)\left(\prod_{r:\, r\ne j}\frac{y_jt^{-1}-y_r}{y_j-y_r}\right) \si^-_N(y_j) (S^-_{q,y_j}f(X)-f(X)).
\end{align*}
As $X^0\to 0^n$, this expression tends to 
\begin{align*}
&\sum_{j=1}^m t^n \left(\prod_{r:\, r\ne j}\frac{y_jt-y_r}{y_j-y_r}\right) \si^+_N(y_j) (S^+_{q,y_j}f(\wt X)-f(\wt X))\\
+&\sum_{j=1}^m t^{-n}\left(\prod_{r:\, r\ne j}\frac{y_jt^{-1}-y_r}{y_j-y_r}\right) \si^-_N(y_j) (S^-_{q,y_j}f(\wt X)-f\wt (X)).
\end{align*}
Using the fact that the value $f(\wt X)$ is the minimum we obtain
$$
\si^\pm_N(y_j)(S^\pm_{q,y_j}f(\wt X)-f(\wt X))\ge0, \quad j=1,\dots,m.
$$
The argument here is the same as on steps 2 and 3 from Lemma \ref{lemma5.E}). Next, the products over $r$ are nonnegative, too (the same argument as on step 1 from Lemma \ref{lemma5.E}). Thus, the whole expression is nonnegative.

We now turn to $D_{X^0}f(X)$. Because both $D_N f(X)$ and $D_Y f(X)$ have limits as $X^0\to 0^n$, the limit of $D_{X^0}f(X)$ exists, too. 

The configuration $Y$ being fixed, we set 
$$
g(x_1,\dots,x_n):=f(x_1,\dots,x_n,y_1,\dots,y_m).
$$
This is a symmetric polynomial and $f(X)=g(X^0)$. 

We can represent $D_{X^0}f(X)$ as the sum of two expressions: one is 
\begin{equation}\label{eq5.K}
\begin{aligned}
 &\sum_{i=1}^m\left(\prod_{r:\, r\ne i}\frac{x_it-x_r}{x_i-x_r}\right) \si^+_N(x_i) (S^+_{q,x_i}g(X^0)-g(X^0))\\
+&\sum_{i=1}^m\left(\prod_{r:\, r\ne i}\frac{x_it^{-1}-x_r}{x_i-x_r}\right) \si^-_N(x_i) (S^-_{q,x_i}g(X^0)-g(X^0))
\end{aligned}
\end{equation}
and the other is
\begin{equation}\label{eq5.L}
\begin{aligned}
  &\sum_{i=1}^n\left(\prod_{j=1}^m \frac{x_it-y_j}{x_i-y_j}-1\right)\left(\prod_{r:\, r\ne i}\frac{x_it-x_r}{x_i-x_r}\right) \si^+_N(x_i) (S^+_{q,x_i}g(X^0)-g(X^0))\\
+&\sum_{i=1}^n \left(\prod_{j=1}^m \frac{x_it^{-1}-y_j}{x_i-y_j}-1\right)\left(\prod_{r:\, r\ne i}\frac{x_it^{-1}-x_r}{x_i-x_r}\right) \si^-_N(x_i) (S^-_{q,x_i}g(X^0)-g(X^0)).
\end{aligned}
\end{equation}

Observe that \eqref{eq5.K} coincides with $t^m D_n g(X^0)$, where the factor $t^m=t^{N-n}$ arises because $\si^\pm_N(x)=t^{N-n}\si^\pm_n(x)$, see \eqref{eq5.A1} and \eqref{eq5.A2}. Therefore, as $X^0$ approaches $0^n$, the expression \eqref{eq5.K} tends to $t^n D_n g(0^n)$. This implies in turn that the expression \eqref{eq5.L} has a limit, too. 

We are going to show that $D_n g(0^n)\ge0$ and that the limit of \eqref{eq5.L} is $0$. This will prove the proposition.

Among the $y_j$'s, there is some number of positive points and some number of negative points. Denote these two numbers by  $m^+$ and $m^-$, respectively. Next, introduce two parameters $\wt a$ and $\wt b$ such that
$$
\wt a^{-1}:= t^{m^+}a^{-1}, \qquad \wt b^{-1}:=t^{m^-} b^{-1}.
$$
If $X^0\in \Om_n(q,t; \wt a, \wt b)$ and all $x_i$'s are close enough to $0$, then $X\in \Om_N(q,t;a,b)$. 

It follows that the polynomial $g(x_1,\dots,x_n)$, viewed as a function on $\wt\Om_n(q,t; \wt a, \wt b)$, has a local minimum at $0^n$. Thus, $g$ satisfies the assumption of Lemma \ref{lemma5.D} (with $N$, $a$, and $b$ replaced by $n$, $\wt a$, and $\wt b$, respectively). By virtue of this lemma, $D_n g(0^n)\ge0$.

It remains to show that the limit of \eqref{eq5.L} is $0$.  Expand $g$ into the sum of its homogeneous components,  $g=g_0+g_1+g_2+\dots$\,. The value of the constant term $g_0$ is irrelevant, because it does not affect the quantities $S^\pm_{q,x_i}g(X^0)-g(X^0)$. Thus, we may assume $g_0=0$. Next, because $g$ satisfies the assumption of Lemma \ref{lemma5.B} (again, with a suitable change of the parameters), we have $g_1=0$. 

The limit value of \eqref{eq5.L} does not depend on the way in which the $n$ variables $x_1,\dots,x_n$ approach $0$, so we have the freedom to specify it as we wish. Using this freedom we will assume that
$$
(x_1,\dots,x_n)=(\eps x'_1,\dots,\eps x'_n), 
$$
where $(x'_1,\dots,x'_n)$ is fixed, the $x'_i$ are pairwise distinct, and $\eps=q^m$ goes to $0$. 

Let us substitute these $x_i$'s into \eqref{eq5.L} and evaluate the order of all terms with respect to $\eps\to0$. 

$\bullet$ Since $g_1=0$, we have $S^\pm_{q,x_i}g(X^0)-g(X^0)=O(\eps^2)$.

$\bullet$ Since $\si^\pm_N(x_i)$ is a linear combination of $1, x_i^{-1}, x_i^{-2}$,  we have $\si^\pm_N(x_i)=O(\eps^{-2})$. 

$\bullet$ From this we see that all terms of the form $\si_N^\pm(x_i)(S^\pm_{q,x_i}g(X^0)-g(X^0))$ are $O(1)$. 

$\bullet$ Next, the products over $r$ do not depend on $\eps$.

$\bullet$ Finally, for any $i$, 
$$
\prod_{j=1}^m \frac{x_it^{\pm1}-y_j}{x_i-y_j}-1 =\prod_{j=1}^m \frac{\eps x'_i t^{\pm1}-y_j}{\eps x'_i-y_j}-1=O(\eps).
$$ 

It follows that the whole expression is of order $O(\eps)$ and hence tends to $0$ as $\eps\to0$. This completes the proof. 
\end{proof}

\subsection{Markov dynamics on $\wt\Om_N(q,t;a,b)$}

Proposition \ref{prop5.A} leads to a generalization of Theorem \ref{thm3.A}. As above, $(q,t;\abcd)$ is a fixed $6$-tuple of parameters such that $q,t\in(0,1)$, $b<0<a$, and $c=\bar d\in\C\setminus\R$. Recall that the operator $D_N$, depending on these parameters, is initially defined on the space $\Sym(N)$ by \eqref{eq5.B} and then it is transferred to $C(\wt\Om_N(q,t;a,b))$ by embedding $\Sym(N)$ into $C(\wt\Om_N(q,t;a,b))$ as a dense subspace.

\begin{theorem}\label{thm5.A} 
The operator $D_N$ serves as a pre-generator of a Feller semigroup on the space $C(\wt\Om_N(q,t;a,b))$.
\end{theorem}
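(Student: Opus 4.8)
The plan is to apply the criterion from section \ref{sect2.4}, which reduces the existence of a Feller pre-generator to three conditions: density of the domain, density of the range of $r-D_N$ for every $r>0$, and the positive minimum principle. The third condition is exactly the content of Proposition \ref{prop5.A}, which has already been established, so the main work of the theorem has been done. What remains is to dispatch conditions (i) and (ii), both of which should be comparatively routine once the right triangular structure of $D_N$ is exposed.

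For condition (i), density of the domain is immediate: by Lemma \ref{lemma4.B} the domain $\Sym(N)$ is embedded in $C(\wt\Om_N(q,t;a,b))$ as a dense subspace, so there is nothing further to prove. For condition (ii), I would mimic the argument used for the one-particle case in the proof of Theorem \ref{thm3.A}. The key point is that $D_N$ is ``almost triangular'' with respect to a natural filtration of $\Sym(N)$ by degree. Concretely, $D_N$ preserves the degree filtration and lowers degrees by at most $2$ (this is visible from the structure of $\si^\pm_N$ as combinations of $1,x^{-1},x^{-2}$ together with the fact that the Vandermonde ratios $S^\pm_{t,i}V_N/V_N$ are degree-preserving), and its leading action on each homogeneous piece is diagonalized by the big $q$-Jacobi polynomials with strictly negative eigenvalues for nonconstant polynomials. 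I would cite the eigenvalue structure from \cite{S} (or the analog computed in formula \eqref{eq1.B}, whose one-dimensional shadow is the $\m_n<0$ statement in the proof of Theorem \ref{thm3.A}) to conclude that, for each $r>0$, the operator $r-D_N$ acts invertibly on every finite-dimensional space of symmetric polynomials of bounded degree. Hence $\Ran(r-D_N)\supseteq\Sym(N)$, and since $\Sym(N)$ is dense by Lemma \ref{lemma4.B}, condition (ii) holds.

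Assembling these three verifications, the criterion of section \ref{sect2.4} yields that $D_N$ is a pre-generator of a Feller semigroup on $C(\wt\Om_N(q,t;a,b))$, which is the assertion of the theorem.

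I expect that the only genuinely nontrivial input is Proposition \ref{prop5.A}, which has been proved separately through the delicate analysis of the behavior of $f$ near the singular point $0^N$ and the sparse geometry of $\wt\Om_N(q,t;a,b)$. The remaining obstacle, if any, is purely bookkeeping: one must be careful that the big $q$-Jacobi polynomials $\Phi_\la$ (equivalently, the polynomials of \cite{S}) genuinely furnish an eigenbasis of $D_N$ with the claimed strictly negative eigenvalues for $\la\ne\varnothing$, so that the triangularity argument for the range condition is rigorous rather than merely formal. Since this eigenbasis property is exactly what \cite{S} provides and what is recalled in section \ref{sect6}, the invertibility of $r-D_N$ on each graded piece follows without further difficulty.
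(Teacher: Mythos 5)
Your proposal is correct and follows essentially the same route as the paper: the criterion of section \ref{sect2.4} with (iii) supplied by Proposition \ref{prop5.A}, (i) by Lemma \ref{lemma4.B}, and (ii) by the fact that $D_N$ preserves the degree filtration of $\Sym(N)$ and acts on each finite-dimensional filtered piece with spectrum in $(-\infty,0]$, so that $r-D_N$ is invertible there. The only cosmetic differences are that the paper verifies (ii) via triangularity of $D_N$ in the monomial basis $\{m_{\la\mid N}\}$ with the explicit nonpositive diagonal entries \eqref{eq5.M} taken from \cite[Proposition 4.2]{S}, rather than invoking the full big $q$-Jacobi eigenbasis (which is only recalled later, in section \ref{sect6}), and that your citation of a formula ``\eqref{eq1.B}'' points to nothing in the paper.
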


Combining this result with the general theorem from section \ref{sect2.5} we obtain that $D_N$ gives rise to a Markov process on the space $\wt\Om_N(q,t;a,b)$.

\begin{proof}
We apply the criterion stated in section \ref{sect2.4}. 

Condition (iii) holds by virtue of Proposition \ref{prop5.A}. 

Condition (i) is obvious. Indeed, $D_N$ is defined on the dense subspace $\Sym(N)\subset C(\wt\Om_N(q,t,a,b))$.

Finally, condition (ii) is checked as follows. Recall that $\Y$ is our notation for the set of partitions, which we identify with the corresponding Young diagrams. Let $\Y(N)\subset\Y$ denote the subset of partitions with the length at most $N$. Consider the basis $\{m_{\la\mid N}:\la\in\Y(N)\}$ in $\Sym(N)$ formed by the monomial symmetric polynomials. Let us equip $\Y(N)$ with the lexicographic order. 

From \cite[Proposition 4.2 and formula (3.7)]{S} it follows that $D_N$ has triangular form in the basis $\{m_{\la\mid N}\}$, with nonpositive entries on the diagonal. 

Indeed, for each $\la\in\Y(N)$
$$
D_N\, m_{\la\mid N} = \m_{\la\mid N}(q,t;a,b;c,d)\, m_{\la\mid N}+ \text{lower terms in lexicographic order}, 
$$
where 
\begin{equation}\label{eq5.M}
\m_{\la\mid N}(\la;q,t;\abcd):=-\, \sum_{i=1}^N\left(\frac{cdq}{ab} t^{2N-i-1}(q^{\la_i}-1)+t^{\la_i-1}(q^{-\la_i}-1)\right).
\end{equation}
(To deduce this result from \cite[Proposition 4.2]{S} one should take into account a difference in notation, see section \ref{sect5.1}. Actually, that proposition gives a stronger result, but for our purpose the above formulation suffices.) 
Next, because $cd>0$ and $ab<0$, the quantity $\m_{\la\mid N}(q,t;\abcd)$ is strictly negative for all $\la\ne\varnothing$ (where $\varnothing$ denotes the empty Young diagram), and vanishes for $\la=\varnothing$: indeed, both summands inside the big round brackets are strictly positive for $\la\ne\varnothing$, and vanish for $\la=\varnothing$.  

We conclude that for any $r>0$, the operator $r-D_N$ on $\Sym(N)$ is invertible. This implies the desired condition (ii). 
\end{proof}

\section{$N$-variate big $q$-Jacobi polynomials and their analogs in the algebra of symmetric functions}\label{sect6}

Here we collect a few necessary results from \cite{S}, \cite{SK}, and \cite{Ols-2021b}. 

\subsection{The polynomials $\varphi_{\la\mid N}(x_1,\dots,x_N; q,t;\abcd)$}

As before, we assume that the parameters $q$ and $t$ are in $(0,1)$. For $\la\in\Y(N)$, we denote by $P_{\la\mid N}(-;q,t)$ the $N$-variate Macdonald polynomial with the parameters $(q,t)$ and the index $\la\in\Y(N)$ (\cite[Ch. VI]{Mac-1995}).  The Macdonald polynomials form a homogeneous basis in $\Sym(N)$. The degree of $P_{\la\mid N}(-;q,t)$ equals $|\la|$, the number of boxes in the diagram $\la$.

Recall our  assumptions on the parameters $\abcd$:  $b<0<a$ and $c=\ov d\in\C\setminus\R$.  Recall that the $q$-difference operator $D_N$ acting on $\Sym(N)$ is defined in \eqref{eq5.B}.

Recall also that the quantities $\m_{\la\mid N}(q,t;\abcd)$ are defined in \eqref{eq5.M}. 

Given a Young diagram $\la\in\Y$ we introduce the \emph{generalized Pochhammer symbol} 
\begin{equation}\label{eq6.B}
(z;q,t)_\la:=\prod_{(i,j)\in\la} (1-zt^{1-i}q^{j-1}), \quad z\in\C, 
\end{equation}
where $(i,j)\in\la$ means that $\la$ contains the box on the intersection of row $i$ and column $j$. 

Finally, the notation $\nu\subseteq\la$ for Young diagrams $\la$ means that $\nu$ is contained in $\la$. 

\begin{proposition}\label{prop6.A}
Let $(q,t;\abcd)$ be a $6$-tuple of parameters satisfying the constraints listed above. For each $N$, in the algebra $\Sym(N)$, there exists an inhomogeneous basis of polynomials 
\begin{equation}\label{eq6.A}
\varphi_{\la\mid N}(-; q,t;\abcd)=\varphi_{\la\mid N}(x_1,\dots,x_N; q,t;\abcd), \qquad \la\in\Y(N),
\end{equation}
with the following properties.

$\bullet$ The polynomials \eqref{eq6.A} are eigenfunctions of the operator $D_N$: one has 
\begin{equation}\label{eq6.F}
D_N\varphi_{\la\mid N}(-; q,t;\abcd)=\m_{\la\mid N}(q,t;\abcd) \varphi_{\la\mid N}(-; q,t;\abcd),
\end{equation}
where the eigenvalues $\m_{\la\mid N}(q,t;\abcd)$ are given by \eqref{eq5.M}.

$\bullet$ The expansion of $\varphi_{\la\mid N}(-; q,t;\abcd)$ in the basis of Macdonald polynomials can be written in the form
\begin{equation}\label{eq6.C}
\varphi_{\la\mid N}(-;q,t;\abcd)=\sum_{\nu: \, \nu\subseteq\la}\frac{(t^N;q,t)_\la}{(t^N;q,t)_\nu}\pi_N(\la,\nu;q,t;\abcd) P_{\nu\mid N}(-;q,t),
\end{equation}
where $\pi_N(\la,\nu;q,t;\abcd)$ are certain coefficients, such that
\begin{equation}\label{eq6.D}
\pi_N(\la,\la;q,t;\abcd)=1
\end{equation}
and the following relations hold
\begin{equation}\label{eq6.E}
\pi_N(\la,\nu;q,t;\abcd)=\pi_{N+1}(\la,\nu;q,t; a,b, ct^{-1}, dt^{-1}),  \qquad \la,\nu\in\Y(N).
\end{equation}
\end{proposition}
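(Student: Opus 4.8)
The plan is to construct the polynomials $\varphi_{\la\mid N}$ as eigenfunctions of $D_N$ and then verify the stated expansion properties. First I would establish the diagonalizability of $D_N$ in an appropriate basis. As shown in the proof of Theorem \ref{thm5.A}, condition (ii), the operator $D_N$ is triangular in the basis $\{m_{\la\mid N}\}$ of monomial symmetric polynomials (with respect to lexicographic order), with diagonal entries $\m_{\la\mid N}(q,t;\abcd)$. The crucial arithmetic input is that these eigenvalues are \emph{distinct} for distinct $\la\in\Y(N)$: from \eqref{eq5.M}, using $cd>0$, $ab<0$, and $q,t\in(0,1)$, one checks that $\la\mapsto\m_{\la\mid N}$ is injective. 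Granting distinctness of eigenvalues, a triangular operator with distinct diagonal entries is diagonalizable, and for each $\la\in\Y(N)$ there is a unique eigenvector $\varphi_{\la\mid N}$ with eigenvalue $\m_{\la\mid N}$ whose leading term (in lexicographic order) is $m_{\la\mid N}$ with coefficient $1$. This simultaneously gives \eqref{eq6.F} and the fact that $\{\varphi_{\la\mid N}\}$ is a basis.

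Next I would pin down the normalization and the expansion \eqref{eq6.C}. Since $D_N$ lowers degree by at most $2$ and is triangular, the eigenvector $\varphi_{\la\mid N}$ expands in Macdonald polynomials $P_{\nu\mid N}$ over $\nu\subseteq\la$ (the containment $\nu\subseteq\la$, rather than merely $|\nu|\le|\la|$, should follow from the more precise triangularity statement in \cite[Proposition 4.2]{S}, which I would invoke). The specific coefficient shape in \eqref{eq6.C}, namely the prefactor $(t^N;q,t)_\la/(t^N;q,t)_\nu$ multiplying a residual coefficient $\pi_N(\la,\nu)$, is a matter of \emph{defining} $\pi_N$ by this relation; the content is then that $\pi_N(\la,\la)=1$, which is exactly the leading-coefficient normalization I just imposed (the prefactor equals $1$ when $\nu=\la$), giving \eqref{eq6.D}. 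At this point \eqref{eq6.C} and \eqref{eq6.D} are essentially bookkeeping once the eigenbasis is fixed.

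The genuinely substantive claim is the stability relation \eqref{eq6.E}, which asserts that the normalized coefficients $\pi_N(\la,\nu;q,t;\abcd)$ are unchanged under $N\mapsto N+1$ together with $c,d\mapsto ct^{-1},dt^{-1}$. I expect this to be the main obstacle. The natural approach is to compare the two recurrences (in $N$ and $N{+}1$) produced by the eigenvalue equation \eqref{eq6.F}. One should check that the eigenvalues match under the parameter shift: substituting $N+1$ and $c,d\mapsto ct^{-1},dt^{-1}$ into \eqref{eq5.M}, the factor $\tfrac{cd q}{ab}t^{2N-i-1}$ becomes $\tfrac{cd q}{ab}t^{2(N+1)-i-1}t^{-2}=\tfrac{cd q}{ab}t^{2N-i-1}$, so the eigenvalues are literally invariant — this is the key structural coincidence that makes \eqref{eq6.E} even plausible. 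Then, since both the operator's matrix entries in the Macdonald basis and the factored prefactor $(t^N;q,t)_\la/(t^N;q,t)_\nu$ have explicit $N$-dependence coming from \cite{S}, one verifies that after dividing out the prefactor the remaining linear system determining $\pi_N$ (a triangular system whose off-diagonal entries are fixed by \eqref{eq6.F}) is identical for the two parameter choices. Concretely, I would either cite the stabilization already established in \cite{Ols-2021b} for the lift to $\Sym$ and read \eqref{eq6.E} off from it, or carry out the direct inductive comparison of coefficients, using \eqref{eq6.D} as the base case and the distinctness of eigenvalues to guarantee that the coefficients are uniquely determined at each step. The delicate point is tracking the $N$-dependence of the Macdonald-basis matrix of $D_N$ and confirming that the generalized Pochhammer prefactor absorbs exactly this dependence; this is where the specific form of the operator from \cite{S} and the compatibility of Macdonald polynomials under adding a variable (the stability $P_{\nu\mid N+1}\big|_{x_{N+1}=0}=P_{\nu\mid N}$) must be combined carefully.
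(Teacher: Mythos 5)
Your construction hinges on the claim that the eigenvalues $\m_{\la\mid N}(q,t;\abcd)$ of \eqref{eq5.M} are pairwise distinct over $\la\in\Y(N)$, and this is the genuine gap: you do not verify it, and it is in fact false for some admissible parameters. Writing $A:=-cdq/(ab)>0$, the eigenvalue equals $\sum_i\bigl(At^{2N-i-1}(q^{\la_i}-1)-t^{\la_i-1}(q^{-\la_i}-1)\bigr)$; already for $N=2$ the equation $\m_{(2,0)\mid2}=\m_{(1,1)\mid2}$ is a single linear condition on $A$ whose solution is a positive number for suitable $q,t\in(0,1)$ (for instance $q$ small and $t$ close to $1$), and $A=|c|^2q/(a|b|)$ sweeps all of $\R_{>0}$ as the admissible $a,b,c,d$ vary. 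Since $(1,1)<(2,0)$ in the order in which $D_N$ is triangular, at such parameter values the triangular matrix may carry a nontrivial Jordan block or a two-dimensional eigenspace, and in either case your ``unique eigenvector with leading term $m_{\la\mid N}$'' is not defined. This is precisely why Stokman defines $\varphi_{\la\mid N}$ by triangularity together with orthogonality with respect to the measure $M_N(-;q,t;\abcd)$ (which requires no spectral nondegeneracy) and only afterwards proves the eigenfunction property \eqref{eq6.F} as a theorem --- which is the route the paper takes, citing \cite[Definition 5.1 and Theorem 5.7]{S}. One could try to repair your construction by working with generic parameters and extending by continuity, but then you must show the coefficients have no poles at the degenerate parameter values, which again forces you back to an explicit formula for them.

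Two further points are deferred rather than proved. The containment $\nu\subseteq\la$ in \eqref{eq6.C} does not follow from triangularity in the monomial basis (that gives only a dominance-order bound plus the fact that $D_N$ drops degree by at most $2$); in the paper it comes from the degeneration \eqref{eq6.T} of Koornwinder polynomials via Okounkov's binomial formula, i.e.\ from \cite[Propositions 3.1 and 3.2]{Ols-2021b}. As for the stability \eqref{eq6.E}, your computation that the eigenvalues are invariant under $N\mapsto N+1$, $(c,d)\mapsto(ct^{-1},dt^{-1})$ is correct and is indeed the structural fact the paper exploits later (see \eqref{eq7.C}); but invariance of the eigenvalues does not by itself give invariance of the eigenvectors' coefficients, the ``direct inductive comparison'' would require computing the full matrix of $D_N$ in the Macdonald basis (which you do not do), and your stated fallback --- citing \cite{Ols-2021b} --- is exactly what the paper does, reading \eqref{eq6.C}--\eqref{eq6.E} off an explicit expression for $\pi_N$ obtained there.
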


The polynomials $\varphi_{\la\mid N}(-; q,t;\abcd)$ are called the $N$-variate \emph{big $q$-Jacobi polynomials}.\footnote{In the limit as $q$ and $t$ go to $1$ keeping the quantity $\log_q t$ fixed, these polynomials degenerate into Heckman--Opdam's Jacobi polynomials \cite{HO}.} They were introduced and studied in the works of Stokmant \cite{S} and Stokman--Koornwinder \cite{SK}. Recall that the notation of the parameters in these papers differs from our notation (see \eqref{eq5.C}). When $N=1$, the dependence on $t$ disappears and one gets the classical big $q$-Jacobi orthogonal polynomials studied by Andrews and Askey \cite{AA} (see also Koekoek-Swarttouw\cite[sect. 3.5]{KS}, Koornwinder \cite[sect. 14.5]{Koo-2014}). 

In the special case $t=q$, the $N$-variate polynomials can be expressed through the univariate polynomials in a simple way. But this method does not work for $t\ne q$.  

Let us explain how to connect Proposition \ref{prop6.A} with results from \cite{S}, \cite{SK}, and \cite{Ols-2021b}. 

In \cite[Definition 5.1]{S}, the polynomials $\varphi_{\la\mid N}(-; q,t;\abcd)$ are specified by the following characteristic properties.

$\bullet$ \emph{Triangularity}: $\varphi_{\la\mid N}(-; q,t;\abcd)$ differs from $m_{\la\mid N}$ by a linear combination of polynomials  $m_{\nu\mid N}$ with $\mu<\la$, where ``$<$'' stands for a partial order consistent with the lexicographic order. 

$\bullet$ \emph{Orthogonality}: $\varphi_{\la\mid N}(-; q,t;\abcd)$ is orthogonal to all polynomials of the form $m_{\nu\mid N}$,  $\nu<\la$, with respect to a certain scalar product  in $\Sym(N)$, which depends on the parameters  (this scalar product is given by a measure which we denote by $M_N(-;q,t;\abcd)$, see Proposition \ref{prop6.C} below). 

The fact that the polynomials introduced in this way are eigenfunctions of a partial $q$-difference operator (in our notation, $-D_N$) is established in \cite[Theorem 5.7, part (1)]{S}.  This gives the result stated in \eqref{eq6.F}. 

The paper \cite{SK} is devoted to limit transitions. It also contains a summary of the results of \cite{S} and some valuable additional remarks, see e.g. \cite[section 2]{SK}.  For our purposes, especially important is \cite[Theorem 5.1, claim (1)]{SK}. It explains how to obtain the big $q$-Jacobi polynomials from the Koornwinder polynomials through a limiting procedure:
\begin{equation}\label{eq6.T}
\varphi_{\la\mid N}(x_1,\dots,x_N;q,t;\abcd)=\lim_{\eps\to0}\eps^{|\la|} K^{(N)}_\la\left(\dfrac{x_1}{\eps},\dots,\dfrac{x_N}{\eps}; q,t; c\eps, d\eps; \dfrac q{a\eps}, \dfrac q{b\eps}\right),
\end{equation}
where $K^{(N)}_\la(-;q,t;t_0,t_1,t_2,t_3)$ are the Koornwinder polynomials (here we use the notation of \cite{R}). 

In \cite[Proposition 3.1]{Ols-2021b}, the existence of the limit on the right-hand side is obtained in a different way, by making use of Okounkov's BC-type interpolation polynomials \cite{Ok} and Rains' work \cite{R}; and then \eqref{eq6.T} is taken as the initial definition. This leads to the expansion \eqref{eq6.C}, with an explicit (albeit complicated) expression for the coefficients. See \cite[Propositions 3.1 and 3.2, and Corollary 3.3]{Ols-2021b}. 

The approach of \cite{Ols-2021b} also provides an alternative proof of the formulas \eqref{eq6.F} and \eqref{eq5.M}: once the degeneration of Koornwinder polynomials to big $q$-Jacobi polynomials is established, these formulas can be derived from the analogous formulas for the Koornwinder polynomials. 

Finally, let us comment on the last two formulas of Proposition \ref{prop6.A}. The equality \eqref{eq6.D} shows that the top degree homogeneous component of $\varphi_{\la\mid N}(-; q,t;\abcd)$ is the Macdonald polynomial $P_\la(-;q,t)$; of course, this fact can also be derived from the results of \cite{S}, \cite{SK}. Next, the relation \eqref{eq6.E} is of fundamental importance to us as it leads to  the definition of the big $q$-Jacobi symmetric functions, see the next subsection.

\subsection{The symmetric functions $\Phi_\la(-;q,t;\alde)$}\label{sect6.2}

From now on we make $c$ and $d$ depending on $N$ and change accordingly our notation. Namely, we will assume that
\begin{equation}\label{eq6.L}
(\abcd)= (\al,\be;\ga t^{1-N}, \de t^{1-N}), \qquad N\in\Z_{\ge1},
\end{equation}
where $(\alde)$ is a fixed $4$-tuple satisfying the same constraints as before:
$$
\be<0<\al, \qquad \ga=\ov\de\in\C\setminus\R.
$$ 

\begin{definition}\label{def6.A}
Let $\la\supseteq\nu$ be a pair of Young diagrams. Observe that, by virtue of \eqref{eq6.E}, the quantity $\pi_N(\la,\nu;q,t;\al,\be,\ga t^{1-N}, \de t^{1-N})$ is \emph{stable} in the sense that it does not depend on $N$. Let us denote this stable quantity by $\pi(\la,\nu;q,t;\alde)$. 
\end{definition}

In this notation, \eqref{eq6.C} takes the form
\begin{equation}\label{eq6.G}
\varphi_{\la\mid N}(-;q,t;\al,\be,\ga t^{1-N}, \de t^{1-N})=\sum_{\nu: \, \nu\subseteq\la}\frac{(t^N;q,t)_\la}{(t^N;q,t)_\nu}\pi(\la,\nu;q,t;\alde) P_{\nu\mid N}(-;q,t)
\end{equation}
or else
\begin{equation}\label{eq6.H}
\frac{\varphi_{\la\mid N}(-;q,t;\al,\be,\ga t^{1-N}, \de t^{1-N})}{(t^N;q,t)_\la}=\sum_{\nu: \, \nu\subseteq\la}\pi(\la,\nu;q,t;\alde) \frac{P_{\nu\mid N}(-;q,t)}{(t^N;q,t)_\nu}.
\end{equation}

We refer to the polynomials on the left-hand and right-hand sides of \eqref{eq6.H} as to the \emph{renormalized} polynomials (big $q$-Jacobi and Macdonald, respectively). 

Let us denote by $P_\nu(-;q,t)$ the Macdonald symmetric \emph{functions} indexed by arbitrary Young diagrams $\nu\in\Y$ (see \cite[ch. VI]{Mac-1995}).

\begin{definition}\label{def6.C}
The \emph{big $q$-Jacobi symmetric functions} are the following elements of $\Sym$ indexed by arbitrary Young diagrams $\la\in\Y$: 
\begin{equation}\label{eq6.I}
\Phi^{q,t;\alde}_\la(-):= \sum_{\nu: \, \nu\subseteq\la}\pi(\la,\nu;q,t;\alde) P_\nu(-;q,t).
\end{equation}
\end{definition}

This definition is taken from \cite[section 7.1]{Ols-2021b}. In the special case $q=t$ it was given in \cite{Ols-2017}. Because the coefficient $\pi(\la,\nu;q,t;\alde)$ equals $1$ for $\nu=\la$, the top degree homogeneous component of $\Phi^{q,t;\alde}_\la(-)$ equals $P_\la(-;q,t)$. It follows that the big $q$-Jacobi symmetric functions form an inhomogeneous basis in $\Sym$, consistent with the filtration by degree. 

Denote by $\Sym_{\le d}$ the subspace in $\Sym$ formed by the elements of degree at most $d$, where $d=0,1,2,\dots$\,. In the similar way we define the subspaces $\Sym_{\le d}(N)\subset \Sym(N)$. These subspaces have finite dimension and the canonical projection  $\Sym\to\Sym(N)$ determines a projection $\Sym_{\le d}\to\Sym_{\le d}(N)$. For $N\ge d$ the latter projection is a linear isomorphism and hence has the inverse:
$$
\iota_{d,N}: \Sym_{\le d}(N)\to \Sym_{\le d}, \qquad N\ge d.
$$
Evidently, $\iota_{d+1,N}$ extends $\iota_{d,N}$ for every $N\ge d+1$. 

The next definition is taken from \cite[Definition 2.4]{Ols-2017}.

\begin{definition}\label{def6.B}
Let us say that a sequence $\{F_N\in\Sym(N): N\ge N_0\}$ \emph{converges} to an element $F\in\Sym$ if $\sup_N\deg F_N<\infty$ and for $d$ large enough
$$
\lim_{N\to\infty}\iota_{d,N}(F_N)\to F
$$
in the finite-dimensional space $\Sym_{\le d}$.  Then we write $F_N\to F$ or $\lim_N F_N=F$.
\end{definition}

The next proposition is an extension of \cite[Theorem 3.4 (i)]{Ols-2017}.

\begin{proposition}\label{prop6.B}
For each $\la\in\Y$
\begin{equation}\label{eq6.J}
\varphi_{\la\mid N}(-;q,t;\al,\be,\ga t^{1-N}, \de t^{1-N}) \to \Phi^{q,t;\alde}_\la(-)
\end{equation}
in the sense of Definition \ref{def6.B}.
\end{proposition}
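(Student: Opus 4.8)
The plan is to read the definition of convergence (Definition \ref{def6.B}) literally and verify its two requirements directly from the explicit expansions already established in Proposition \ref{prop6.A}. Set $F_N:=\varphi_{\la\mid N}(-;q,t;\al,\be,\ga t^{1-N},\de t^{1-N})$. By \eqref{eq6.D} the top homogeneous component of $F_N$ is $P_\la(-;q,t)$, so $\deg F_N=|\la|$ for every $N$; hence $\sup_N\deg F_N=|\la|<\infty$ and the first requirement of Definition \ref{def6.B} holds. It then suffices to fix any $d\ge|\la|$ and compute the limit of $\iota_{d,N}(F_N)$ in the fixed finite-dimensional space $\Sym_{\le d}$.

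For the second requirement I would start from the expansion \eqref{eq6.G}, which presents $F_N$ as a finite linear combination of the $N$-variate Macdonald polynomials $P_{\nu\mid N}(-;q,t)$ over $\nu\subseteq\la$, with coefficients $\frac{(t^N;q,t)_\la}{(t^N;q,t)_\nu}\,\pi(\la,\nu;q,t;\alde)$. The crucial point is that $\iota_{d,N}$ sends each $P_{\nu\mid N}(-;q,t)$ back to the Macdonald symmetric \emph{function} $P_\nu(-;q,t)$ as soon as $N\ge d\ge|\la|\ge\ell(\nu)$: this is exactly the stability of Macdonald polynomials under the projections $\Sym\to\Sym(N)$ (\cite[ch.\ VI]{Mac-1995}), namely that $P_\nu(-;q,t)$ projects onto $P_{\nu\mid N}(-;q,t)$ whenever $\ell(\nu)\le N$. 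Applying the linear map $\iota_{d,N}$ term by term therefore yields
\[
\iota_{d,N}(F_N)=\sum_{\nu:\,\nu\subseteq\la}\frac{(t^N;q,t)_\la}{(t^N;q,t)_\nu}\,\pi(\la,\nu;q,t;\alde)\,P_\nu(-;q,t),
\]
a finite sum in $\Sym_{\le d}$ whose only $N$-dependence sits in the scalar coefficients.

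It remains to let $N\to\infty$. The coefficients $\pi(\la,\nu;q,t;\alde)$ are independent of $N$ by Definition \ref{def6.A} (this is precisely what the stability relation \eqref{eq6.E} provides), so only the Pochhammer ratio must be controlled. Writing the generalized Pochhammer symbol \eqref{eq6.B} with $z=t^N$ gives, for any diagram $\mu$,
\[
(t^N;q,t)_\mu=\prod_{(i,j)\in\mu}\bigl(1-t^{N+1-i}q^{j-1}\bigr),
\]
and since $0<t<1$ while the row index $i$ stays bounded by $\ell(\mu)\le|\la|$, every factor tends to $1$ as $N\to\infty$. Hence $(t^N;q,t)_\la\to1$ and $(t^N;q,t)_\nu\to1$, so each ratio tends to $1$. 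Passing to the limit coefficientwise in the finite sum above then gives
\[
\lim_{N\to\infty}\iota_{d,N}(F_N)=\sum_{\nu:\,\nu\subseteq\la}\pi(\la,\nu;q,t;\alde)\,P_\nu(-;q,t),
\]
which is exactly $\Phi^{q,t;\alde}_\la(-)$ by Definition \ref{def6.C}, i.e.\ by \eqref{eq6.I}. This establishes \eqref{eq6.J}.

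I do not expect a genuine obstacle: once \eqref{eq6.G} and the $N$-independence of the $\pi$-coefficients are in hand, the argument is essentially bookkeeping, paralleling the proof of \cite[Theorem 3.4(i)]{Ols-2017}. The only step requiring a little care is the identification $\iota_{d,N}(P_{\nu\mid N}(-;q,t))=P_\nu(-;q,t)$, which rests on the stability of Macdonald polynomials together with the elementary bound $\ell(\nu)\le|\nu|\le|\la|\le d\le N$ guaranteeing that no $\nu\subseteq\la$ is annihilated under the projection $\Sym\to\Sym(N)$.
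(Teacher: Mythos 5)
Your proposal is correct and follows essentially the same route as the paper's own proof: expand via \eqref{eq6.G}, use the stability of the Macdonald polynomials to identify $\iota_{d,N}(P_{\nu\mid N}(-;q,t))$ with $P_\nu(-;q,t)$, and observe that the Pochhammer ratios tend to $1$ since $t^N\to0$. The only difference is that you spell out the bounded-degree requirement and the termwise bookkeeping slightly more explicitly than the paper does.
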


\begin{proof}
Due to the stability property of the Macdonald polynomials, given $\nu\in\Y$ and $d\ge|\nu|$, for all $N$ large enough
$$
\iota_{d,N}(P_{\nu\mid N}(-;q,t))=P_\nu(-;q,t).
$$
It follows that 
\begin{equation}\label{eq6.K}
P_{\nu\mid N}(-;q,t)\to P_\nu(-;q,t), \qquad \forall\nu\in\Y.
\end{equation}

On the other hand, $(t^N;q,t)_\mu\to 1$ for any fixed diagram $\mu\in\Y$ (because $t\in(0,1)$ and hence $t^N\to0$). Therefore, the coefficients in the expansion \eqref{eq6.G} converge to the corresponding coefficients in the expansion \eqref{eq6.I}. Together with \eqref{eq6.K} this implies \eqref{eq6.J} (cf. \cite[Proposition 2.5]{Ols-2017}).
\end{proof}

\subsection{Orthogonality measures}

Return for a moment to our previous parameters $(\abcd)$. 

\begin{proposition}\label{prop6.C}
On the set\/ $\wt\Om_N(q,t;\abcd)$, there exists a unique probability measure $M_N(-; q,t;\abcd)$ such that the big $q$-Jacobi polynomials $\varphi_{\la\mid N}(-; q,t;\abcd)$ are orthogonal with respect to it. That is, for any partitions $\la,\, \mu\in\Y(N)$
\begin{gather*}
\sum_{X\in\wt\Om_N(q,t;\abcd)} \varphi_{\la\mid N}(X; q,t;\abcd)\varphi_{\mu\mid N}(X; q,t;\abcd)M_N(X; q,t;\abcd)\\
=\begin{cases} \text{\rm a positive constant $h_{\la\mid N}(q,t;\abcd)$}, & \mu=\la, \\
0, & \mu\ne\la.
\end{cases}
\end{gather*} 
\end{proposition}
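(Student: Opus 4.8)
The plan is to obtain existence from Stokman's construction and to prove uniqueness by a short self-contained argument. Recall from the discussion following Proposition~\ref{prop6.A} that the polynomials $\varphi_{\la\mid N}(-;q,t;\abcd)$ are characterized in \cite{S} by triangularity together with orthogonality with respect to a scalar product on $\Sym(N)$, and that this scalar product is already given by integration against an explicit finite positive measure supported on the product $q$-grid $(a^{-1}q^{\Z_{\ge1}}\cup b^{-1}q^{\Z_{\ge1}})^N$. This weight is invariant under permutations of the coordinates, hence descends to the set of $N$-particle configurations; under the identification of subsection~\ref{sect4.2} its support is exactly $\Om_N(q,t;a,b)$, which sits inside $\wt\Om_N(q,t;a,b)$. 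Normalizing the total mass to $1$ yields the desired probability measure $M_N(-;q,t;\abcd)$, the orthogonality relations are inherited verbatim, and the diagonal constants $h_{\la\mid N}(q,t;\abcd)=\int\varphi_{\la\mid N}^2\,dM_N$ are strictly positive because the scalar product is positive definite (the $N$-variate analog of property (i) in subsection~\ref{sect1.1}). I would note in passing that this $M_N$ is then the reversing, hence stationary, measure of the Feller process from Theorem~\ref{thm5.A}; one could alternatively anchor existence to the invariant measure of that process, but that route would still require establishing reversibility, so invoking \cite{S} is cleaner.

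The part I expect to be the main obstacle is the bookkeeping inside the existence step. One must confirm that the weight really is a \emph{positive} measure and that its support lies in the prescribed $q$-grid, both of which hinge on the standing constraints $b<0<a$ and $c=\ov d\in\C\setminus\R$. Positivity is transparent already at the one-variable level: the factor $(c-x^{-1})(d-x^{-1})=|c-x^{-1}|^2$ is nonnegative on $\R$ precisely because $d=\ov c$, while the prefactor $-q/(ab)$ is positive because $ab<0$ (compare \eqref{eq3.C1}). What genuinely requires care is translating Stokman's normalization and parameter conventions \eqref{eq5.C} into ours and checking that the resulting discrete sum over $X\in\wt\Om_N(q,t;a,b)$ coincides with the bilinear form displayed in the statement; this is routine but notation-heavy.

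For uniqueness I would argue directly and avoid any appeal to eigenvalues. Since the top homogeneous component of $\varphi_{\varnothing\mid N}$ is $P_{\varnothing\mid N}=1$ and this polynomial has degree $0$, we have $\varphi_{\varnothing\mid N}=1$ identically. Hence, for \emph{any} probability measure $M$ on $\wt\Om_N(q,t;a,b)$ making the $\varphi_{\la\mid N}$ orthogonal, and for every $\la\in\Y(N)$,
\[
\int_{\wt\Om_N(q,t;a,b)}\varphi_{\la\mid N}\,dM
=\int \varphi_{\la\mid N}\,\varphi_{\varnothing\mid N}\,dM
=\begin{cases}1,&\la=\varnothing,\\ 0,&\la\ne\varnothing,\end{cases}
\]
where we used $\int 1\,dM=1$ and orthogonality. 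Thus the integrals $\int\varphi_{\la\mid N}\,dM$ are pinned down independently of $M$. Because $\{\varphi_{\la\mid N}:\la\in\Y(N)\}$ is a linear basis of $\Sym(N)$ (Proposition~\ref{prop6.A}), this determines $\int f\,dM$ for every $f\in\Sym(N)$. Finally, by Lemma~\ref{lemma4.B} the algebra $\Sym(N)$ is dense in $C(\wt\Om_N(q,t;a,b))$, and a finite Borel measure on a compact metrizable space is determined by its integrals against a dense subspace of continuous functions; therefore $M=M_N$, which gives uniqueness.
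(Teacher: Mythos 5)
Your proposal is correct and follows essentially the same route as the paper, which for this proposition offers no argument beyond the citation of Stokman's \cite[Theorem 5.7 (1)]{S} (via \cite{Ols-2021b}); your existence step is exactly that citation, and your uniqueness argument --- $\varphi_{\varnothing\mid N}=1$ pins down $\int\varphi_{\la\mid N}\,dM$ for all $\la$, hence $\int f\,dM$ for all $f\in\Sym(N)$, and density of $\Sym(N)$ in $C(\wt\Om_N(q,t;a,b))$ (Lemma \ref{lemma4.B}) finishes it --- is sound and in fact more explicit than anything the paper records. One small caution: your ``transparent positivity'' aside conflates the coefficient $\si^+$ of the $q$-difference operator \eqref{eq3.C1} with Stokman's orthogonality weight, which is a different expression; since you correctly defer the actual verification of positivity and of the support (why it is the $t$-separated set $\Om_N(q,t;a,b)$ rather than the full product grid) to the translation of \cite{S} via \eqref{eq5.C}, this does not affect the argument, but the aside should not be read as a proof of positivity of the weight.
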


This is a reformulation of Stokman's result  \cite[Theorem 5.7 (1)]{S}, see \cite[Theorem 5.2]{Ols-2021b}. 

We call $M_N(-; q,t;\abcd)$ the \emph{orthogonality measure} for the $N$-variate big $q$-Jacobi polynomials. It is in fact supported by the subset $\Om_N(q,t;a,b)\subset\wt\Om_N(q,t;a,b)$. Adopting the notation of \cite[Definition 6.1]{Ols-2021b}, we denote by $\MM_N^{q,t;\alde}$ the orthogonality measure with the shifted parameters \eqref{eq6.L}.  

Recall that $\wt\Om_N(q,t;\al,\be)$ is contained in the space $\wt\Om(q,t;\al,\be)$, for any $N$. 

\begin{proposition}\label{prop6.D}
As $N\to\infty$, the measures\/ $\MM_N^{q,t;\alde}$ weakly converge to a probability measure $\MM_\infty^{q,t;\alde}$ on $\wt\Om(q,t;\al,\be)$, which is in fact concentrated on the subset $\Om_\infty(q,t;\al,\be)\subset\wt\Om(q,t;\al,\be)$.
\end{proposition}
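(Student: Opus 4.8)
The plan is to prove the weak convergence by testing against the dense subalgebra $\Sym\subset C(\wt\Om(q,t;\al,\be))$ (Lemma~\ref{lemma4.A}), and only afterwards to address the concentration statement. Since $\wt\Om(q,t;\al,\be)$ is compact, the probability measures on it form a weak-$*$ compact set, so it is enough to show that for every $f\in\Sym$ the integrals $\int_{\wt\Om}f\,d\MM_N^{q,t;\alde}$ converge to a common limit, to verify that the resulting functional on $\Sym$ is positive with value $1$ on constants, and to extend it by continuity.

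The computation of the limit rests on one observation: $\varphi_{\varnothing\mid N}(-;q,t;\abcd)=1$, because its top homogeneous component is $P_{\varnothing}=1$ and its degree is $0$. Hence, by the orthogonality in Proposition~\ref{prop6.C} together with the normalization $\int 1\,d\MM_N^{q,t;\alde}=1$, integration against $\MM_N^{q,t;\alde}$ kills every $\varphi_{\la\mid N}$ with $\la\ne\varnothing$ and sends $\varphi_{\varnothing\mid N}$ to $1$. Thus, denoting by $f_N\in\Sym(N)$ the image of a fixed $f\in\Sym$ under the canonical projection and expanding $f_N=\sum_\la c^{(N)}_\la\varphi_{\la\mid N}(-;q,t;\al,\be,\ga t^{1-N},\de t^{1-N})$, I get $\int_{\wt\Om}f\,d\MM_N^{q,t;\alde}=c^{(N)}_\varnothing$. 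Now write $f=\sum_{|\la|\le d}c_\la\Phi^{q,t;\alde}_\la$ with $d=\deg f$. For $N\ge d$, applying $\iota_{d,N}$ to the two expansions of $f_N$ gives $\sum_\la c_\la\Phi^{q,t;\alde}_\la=\sum_\la c^{(N)}_\la\,\iota_{d,N}\bigl(\varphi_{\la\mid N}\bigr)$ in the finite-dimensional space $\Sym_{\le d}$; since $\iota_{d,N}(\varphi_{\la\mid N})\to\Phi^{q,t;\alde}_\la$ by Proposition~\ref{prop6.B} and both families are bases, the coefficients converge, so $c^{(N)}_\varnothing\to c_\varnothing$. Consequently $\int_{\wt\Om}f\,d\MM_N^{q,t;\alde}\to\E^{q,t;\alde}(f)$, where $\E^{q,t;\alde}$ is the functional sending $\Phi^{q,t;\alde}_\la$ to $\delta_{\la\varnothing}$.

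Because $\bigl|\int f\,d\MM_N^{q,t;\alde}\bigr|\le\Vert f\Vert_\infty$, this limit functional extends continuously to $C(\wt\Om(q,t;\al,\be))$; being a pointwise limit of positive unital functionals it is positive and equals $1$ on the constant $1$, so by the Riesz representation theorem it is integration against a probability measure $\MM_\infty^{q,t;\alde}$, and $\MM_N^{q,t;\alde}\to\MM_\infty^{q,t;\alde}$ weakly. (This simultaneously identifies $\MM_\infty^{q,t;\alde}$ with the measure $\MM^{q,t;\alde}$ of the Introduction.)

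It remains to show $\MM_\infty^{q,t;\alde}(\Om_\infty(q,t;\al,\be))=1$, and this is the hard part. The coordinate functions $x^+_i$ on $\wt\Om(q,t;\al,\be)$ are continuous (Lemma~\ref{lemma4.A}), and a point has infinitely many particles as soon as $x^+_i>0$ for all $i$; hence it suffices to prove $\MM_\infty^{q,t;\alde}(\{x^+_i=0\})=0$ for every $i$. This cannot come from weak convergence alone, since $\{x^+_i=0\}$ is closed and mass can appear on a closed set in a weak limit (as in $\delta_{1/N}\to\delta_0$); a non-escape estimate on the $\MM_N^{q,t;\alde}$ themselves is required. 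My plan is to use the open neighborhoods $U_\eps:=\{x^+_i<\eps\}$ and the inequality $\MM_\infty^{q,t;\alde}(U_\eps)\le\liminf_N\MM_N^{q,t;\alde}(U_\eps)$ from the portmanteau theorem, reducing the claim to
\[
\lim_{\eps\to+0}\ \liminf_{N\to\infty}\ \MM_N^{q,t;\alde}\bigl(\{x^+_i<\eps\}\bigr)=0,
\]
i.e.\ to showing that the $i$-th positive particle does not collapse to the origin uniformly in $N$. This in turn I would extract from the one-dimensional marginal law of $x^+_i$ under $\MM_N^{q,t;\alde}$, which is accessible from Stokman's explicit product form of the orthogonality measure (\cite{S}; see Proposition~\ref{prop6.C}); alternatively one may invoke the explicit description of the limiting measure obtained in \cite{Ols-2021b}. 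I expect this uniform tightness-away-from-zero estimate to be the main obstacle of the proof.
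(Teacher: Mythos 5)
The first half of your argument (weak convergence) is sound and, in fact, more detailed than the paper, which simply cites \cite[Theorem 6.5]{Ols-2021b} for the whole proposition. Your chain of reductions is correct: $\varphi_{\varnothing\mid N}\equiv 1$, so orthogonality (Proposition \ref{prop6.C}) gives $\int f\,d\MM_N^{q,t;\alde}=c^{(N)}_\varnothing$; the stability/convergence of the bases (Proposition \ref{prop6.B}) in the finite-dimensional space $\Sym_{\le d}$ gives $c^{(N)}_\varnothing\to c_\varnothing$; and weak-$*$ compactness of $\Prob(\wt\Om(q,t;\al,\be))$ plus density of $\Sym$ (Lemma \ref{lemma4.A}) upgrades this to weak convergence to a probability measure. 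This matches the characterization of $\MM_\infty^{q,t;\alde}$ recorded in Remark \ref{rem6.A}.

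The concentration statement, however, is where your proof genuinely stops. Your portmanteau reduction to
$\lim_{\eps\to+0}\liminf_{N\to\infty}\MM_N^{q,t;\alde}(\{x^+_i<\eps\})=0$
is legitimate, but this estimate is exactly the nontrivial content of the claim, and you neither prove it nor indicate a concrete mechanism for it: weak convergence gives no control here (as your own $\delta_{1/N}\to\delta_0$ example shows), the reference to ``Stokman's explicit product form'' is only a pointer to where a computation might start, and invoking ``the explicit description of the limiting measure obtained in \cite{Ols-2021b}'' is circular, since the proposition being proved \emph{is} \cite[Theorem 6.5]{Ols-2021b}. Note also that your sufficient condition ($x^+_i>0$ for all $i$, $\MM_\infty$-a.s.) is strictly stronger than what the proposition asserts, which only requires the \emph{total} number of particles to be a.s.\ infinite; if you pursue this route you should either justify the stronger statement or weaken the target to ``at least one of the two sides is a.s.\ infinite.'' Either way, a quantitative bound on the finite-$N$ marginals, uniform in $N$, is required and is missing; as it stands the proposal proves only the weak convergence, not the concentration on $\Om_\infty(q,t;\al,\be)$.
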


This result is contained in \cite[Theorem 6.5]{Ols-2021b}.

Recall that Lemma \ref{lemma4.A} allows us to treat $\Sym$ as a dense subspace of $C(\wt\Om(q,t;\al,\be))$. Thus, we may consider symmetric functions as continuous functions on $\wt\Om(q,t;\al,\be)$. 

\begin{remark}\label{rem6.A}
Obviously, the limit measure $\MM_\infty^{q,t;\alde}$ satisfies the relations
\begin{equation}\label{eq6.S}
\langle 1,\; \MM_\infty^{q,t;\alde}\rangle=1, \qquad \langle \Phi^{q,t;\alde}_\la(-),\; \MM_\infty^{q,t;\alde}\rangle=0\quad \text{for $\la\ne\varnothing$},
\end{equation}
where the angular brackets denote the canonical pairing between functions and measures. Morover, these relations characterize $\MM_\infty^{q,t;\alde}$ uniquely. 
\end{remark}

We adopt the following notation from \cite[section 2]{R}:
\begin{equation}\label{eq6.U}
C^+_\la(x;q,t):=\prod_{(i,j)\in\la} \left(1-q^{\la_i+j-1}t^{2-\la'_j-i}x\right), \quad 
C^-_\la(x;q,t):=\prod_{(i,j)\in\la} \left(1-q^{\la_i-j}t^{\la'_j-i}x\right),
\end{equation}
where $\la\in\Y$ and $\la'$ is the transposed Young diagram. We also set
$$
\mathfrak s:=\frac{\ga\de q}{\al\be}, \quad n(\la):=\sum_i  (i-1)\la_i, \quad 2\la\cup 2\la:=(2\la_1,2\la_1, 2\la_2,2\la_2, 2\la_3, 2\la_3,\dots).
$$

\begin{proposition}\label{prop6.E}
The limit measure $\MM_\infty^{q,t;\alde}$ serves as the orthogonality measure for the big $q$-Jacobi symmetric functions $\Phi^{q,t;\alde}_\la(-)$ in the sense that for any partitions $\la,\mu\in\Y$
\begin{equation}\label{eq6.O}
\langle\Phi_\la^{q,t;\alde}(-)\Phi_\mu^{q,t;\alde}(-), \; \MM_\infty^{q,t;\alde}\rangle
=\begin{cases} h_\la(q,t;\alde), & \mu=\la, \\
0, & \mu\ne\la,
\end{cases}
\end{equation}
where 
\begin{gather}
h_\la(q,t;\alde):=\dfrac{C^-_\la(q;q,t)}{C^-_\la(t;q,t)}\, \frac{C^+_\la(\s qt^{-1};q,t)}{C^+_\la(\s;q,t)}\, \left(\dfrac{\s q^3}{\al\be t}\right)^{|\la|}\,\dfrac{q^{2 n(\la')}}{t^{2n(\la)}}\, \frac1{(\s q;q,t)_{2\la \cup 2\la}}  \label{eq6.N1} \\
\times \left(\dfrac{\ga q}\al; q,t\right)_\la \, \left(\dfrac{\ga q}\be; q,t\right)_\la\,  \left(\dfrac{\de q}\al; q,t\right)_\la \, \left(\dfrac{\de q}\be; q,t\right)_\la. \label{eq6.N2}
\end{gather}
\end{proposition}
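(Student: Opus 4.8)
The plan is to obtain the statement by passing to the $N\to\infty$ limit in the finite-$N$ orthogonality relations of Proposition \ref{prop6.C}, taken with the shifted parameters \eqref{eq6.L}. Fix $\la,\mu\in\Y$. For every $N$ large enough the relation of Proposition \ref{prop6.C}, written with these parameters, reads $\langle\varphi_{\la\mid N}\,\varphi_{\mu\mid N},\,\MM_N^{q,t;\alde}\rangle=\delta_{\la\mu}\,h_{\la\mid N}$, where here and below $\varphi_{\la\mid N}$ is short for $\varphi_{\la\mid N}(-;q,t;\al,\be,\ga t^{1-N},\de t^{1-N})$ and $h_{\la\mid N}$ for $h_{\la\mid N}(q,t;\al,\be,\ga t^{1-N},\de t^{1-N})$. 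I will show that the left-hand side converges to $\langle\Phi_\la^{q,t;\alde}\,\Phi_\mu^{q,t;\alde},\,\MM_\infty^{q,t;\alde}\rangle$; this yields both the vanishing for $\mu\ne\la$ and the identity $h_\la(q,t;\alde)=\lim_{N\to\infty}h_{\la\mid N}$, exhibiting $h_\la$ as a limit of the positive constants $h_{\la\mid N}$.

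For the convergence of the left-hand side I would first replace each factor $\varphi_{\la\mid N}$, viewed as a function on $\wt\Om_N(q,t;\al,\be)$, by the restriction to $\wt\Om_N(q,t;\al,\be)$ of the symmetric function $\Phi_\la^{q,t;\alde}\in\Sym$. By \eqref{eq6.G} and \eqref{eq6.I} their difference is the linear combination $\sum_{\nu\subseteq\la}\bigl((t^N;q,t)_\la/(t^N;q,t)_\nu-1\bigr)\pi(\la,\nu;q,t;\alde)\,P_{\nu\mid N}(-;q,t)$, whose coefficients tend to $0$ as $N\to\infty$ (because $t^N\to0$), while the Macdonald polynomials $P_{\nu\mid N}$ are uniformly bounded in the supremum norm on the compact space $\wt\Om_N(q,t;\al,\be)\subset\wt\Om(q,t;\al,\be)$, being restrictions of the fixed continuous functions $P_\nu(-;q,t)$. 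Hence $\varphi_{\la\mid N}$ and the restriction of $\Phi_\la^{q,t;\alde}$ differ by a quantity that goes to $0$ uniformly, and likewise for $\mu$; since the $\MM_N^{q,t;\alde}$ are probability measures and all functions involved are uniformly bounded, the integral of $\varphi_{\la\mid N}\varphi_{\mu\mid N}$ against $\MM_N^{q,t;\alde}$ differs by $o(1)$ from the integral of $\Phi_\la^{q,t;\alde}\Phi_\mu^{q,t;\alde}$ against $\MM_N^{q,t;\alde}$ (here one uses that the projection $\Sym\to\Sym(N)$ is an algebra morphism, so the restriction of a product is the product of the restrictions). Finally, $\Phi_\la^{q,t;\alde}\Phi_\mu^{q,t;\alde}$ is a fixed element of $C(\wt\Om(q,t;\al,\be))$ by Lemma \ref{lemma4.A}, so the weak convergence $\MM_N^{q,t;\alde}\to\MM_\infty^{q,t;\alde}$ of Proposition \ref{prop6.D} turns that integral into $\langle\Phi_\la^{q,t;\alde}\Phi_\mu^{q,t;\alde},\MM_\infty^{q,t;\alde}\rangle$. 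This completes the limit-transition argument and, in particular, gives the orthogonality in \eqref{eq6.O}.

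It remains to identify the limit $\lim_N h_{\la\mid N}$ with the explicit product \eqref{eq6.N1}--\eqref{eq6.N2}. For this I would insert the known closed form of the quadratic norm $h_{\la\mid N}(q,t;\abcd)$ of the $N$-variate big $q$-Jacobi polynomials --- obtained in \cite{S}, \cite{SK}, and available through the $\eps\to0$ degeneration \eqref{eq6.T} of the Koornwinder norm in the normalization of \cite{R} --- then substitute $(\abcd)=(\al,\be;\ga t^{1-N},\de t^{1-N})$ and let $N\to\infty$ box by box in the product over $(i,j)\in\la$. The bookkeeping of how the factors $t^{1-N}$ carried by $c=\ga t^{1-N}$ and $d=\de t^{1-N}$ recombine --- producing the parameter $\s=\ga\de q/(\al\be)$, the ratios of $C^\pm_\la$, the power $(\s q^3/(\al\be t))^{|\la|}$, the factors $q^{2n(\la')}/t^{2n(\la)}$ and $(\s q;q,t)_{2\la\cup2\la}^{-1}$, and the four Pochhammer symbols of \eqref{eq6.N2} --- together with the elimination, via $t^N\to0$, of all factors depending on $t^N$, is the delicate point.

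I expect this last computational matching to be the main obstacle: the finite-$N$ norm is a large product in which several groups of factors individually diverge or vanish as $N\to\infty$ because of the $t^{1-N}$ shift, and only after regrouping do they combine into the finite limit \eqref{eq6.N1}--\eqref{eq6.N2}. The limit-transition part, by contrast, is soft and uses only the uniform boundedness of Macdonald polynomials on the compact space $\wt\Om(q,t;\al,\be)$ and the weak convergence already established in Proposition \ref{prop6.D}.
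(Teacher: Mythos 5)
Your proposal is correct and follows the same overall strategy as the paper (pass to the $N\to\infty$ limit in the finite-$N$ orthogonality relations of Proposition \ref{prop6.C} with the shifted parameters \eqref{eq6.L}), but the mechanism you use to justify the limit is genuinely different. The paper works algebraically: it introduces the linear functionals $\E_N^{q,t;\abcd}$ and $\E^{q,t;\alde}$ characterized by annihilating the nonconstant big $q$-Jacobi basis elements, observes that $F_N\to F$ in the sense of Definition \ref{def6.B} implies $\wt\E_N^{q,t;\alde}(F_N)\to\E^{q,t;\alde}(F)$ (a consequence of Proposition \ref{prop6.B}, since everything happens in a fixed finite-dimensional filtration piece $\Sym_{\le d}$), and only at the end identifies $\E^{q,t;\alde}$ with integration against $\MM_\infty^{q,t;\alde}$ via the characterization \eqref{eq6.S} of Remark \ref{rem6.A}; the weak convergence of Proposition \ref{prop6.D} is not invoked for the orthogonality itself. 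You instead argue analytically, replacing $\varphi_{\la\mid N}$ by the restriction of $\Phi_\la^{q,t;\alde}$ with a uniform $o(1)$ error (using that $P_{\nu\mid N}$ is the restriction of the fixed continuous function $P_\nu$ on the compact space $\wt\Om(q,t;\al,\be)$, hence uniformly bounded) and then applying the weak convergence $\MM_N^{q,t;\alde}\to\MM_\infty^{q,t;\alde}$. Both routes are sound; the paper's is softer in that it needs no sup-norm estimates, while yours makes the measure-theoretic content more transparent. On the explicit evaluation of $h_\la=\lim_N h_{\la\mid N}$ you are at exactly the paper's level of detail: the paper also only sketches the route (Rains' formal Koornwinder orthogonality, then $\eps\to0$ via \eqref{eq6.T}, then $N\to\infty$) and explicitly warns that Stokman's closed form for $h_{\la\mid N}$ is written in a form inconvenient for the large-$N$ transition, so of the two sources you name, the Rains normalization is the one to use.
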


In the special case $t=q$, the expression for $h_\la(q,t;\alde)$ simplifies and agrees with \cite[(4.1)]{Ols-2017}. 

\begin{proof}
Let $\E_N^{q,t;\abcd}$ stand for the linear functional on $\Sym(N)$ such that 
$$
\E^{q,t;\abcd}_N(1)=1, \qquad \E^{q,t;\abcd}_N(\varphi_\la(-;q,t;\abcd))=0, \quad \la\ne\varnothing.
$$
Then the orthogonality relations from Proposition \ref{prop6.C} can can be written in the form
\begin{equation}\label{eq6.P}
\E^{q,t;\abcd}_N(\varphi_{\la\mid N}(-; q,t;\abcd)\varphi_{\mu\mid N}(-; q,t;\abcd))
=\begin{cases} h_{\la\mid N}(q,t;\abcd), & \mu=\la, \\
0, & \mu\ne\la.
\end{cases}
\end{equation} 

Likewise, let $\E^{q,t;\alde}$ stand for the linear functional on $\Sym$ such that 
$$
\E^{q,t;\alde}(1)=1, \qquad \E^{q,t;\alde}(\Phi_\la^{q,t;\alde}(-))=0, \quad \la\ne\varnothing.
$$

Next, denote by $\wt\E_N^{q,t;\alde}$ the functional $\E_N^{q,t;\abcd}$ with the shifted parameters $(\abcd)=(\al,\be;\ga t^{1-N}, \de t^{1-N})$. 

Observe that if $F_N\to F$ in the sense of Definition  \ref{def6.B}, then $\wt\E_N^{q,t;\alde}(F_N)\to \E^{q,t;\alde}(F)$ (indeed, this is a consequence of Proposition \ref{prop6.B}). It follows that
\begin{equation}\label{eq6.Q}
\E^{q,t;\alde}(\Phi_\la^{q,t;\alde}(-)\Phi_\mu^{q,t;\alde}(-))
=\begin{cases} h_\la(q,t;\alde), & \mu=\la, \\
0, & \mu\ne\la,
\end{cases}
\end{equation}
where
\begin{equation}\label{eq6.R}
h_\la(q,t;\alde):=\lim_{N\to\infty} h_{\la\mid N}(q,t;\al,\be; \ga t^{1-N}, \de t^{1-N}), \quad \la\in\Y.
\end{equation}

Let us call \eqref{eq6.P} and \eqref{eq6.Q} the \emph{formal orthogonality relations}. (These relations actually make sense for a much larger range of parameters.)

By virtue of \eqref{eq6.S}, the functional $\E^{q,t;\alde}$ coincides with integration against the measure $\MM_\infty^{q,t;\alde}$. Consequently, \eqref{eq6.Q} implies \eqref{eq6.O}. 

Now we have to explicitly compute the limit in \eqref{eq6.R}. An expression for the squared norm $h_{\la\mid N}(q,t;\al,\be; \ga t^{1-N}, \de t^{1-N})$ is contained in Stokman \cite[Theorem 7.5]{S-2000}, but it is written in the form which is not convenient for the large-$N$ transition. An easier way is to start with the formal orthogonality relations for the Koornwinder polynomials as presented in Rains \cite[Corollary 5.11 and Theorem 5.9]{R} with the use of his notation \eqref{eq6.U}; after that take the limit as $\eps\to0$ in accordance with \eqref{eq6.T}, and then  let  $N\to\infty$ according to \eqref{eq6.R}. The computation is a bit long,  but it is actually simpler than it might seem at first glance.
\end{proof}

\begin{remark}
1. The idea to deal with `formal orthogonality' is by no means new. See e.g. the concept of `virtual Koornwinder integral' in  Rains \cite[section 5]{R}.

2. The definition of the big $q$-Jacobi symmetric functions $\Phi_\la^{q,t;\alde}(-)$ in terms of their expansion on Macdonald symmetric functions (see \cite[(7.1)]{Ols-2021b}) makes sense under mild conditions on the parameters $(\alde)$: it suffices to assume that $\be<0<\al$ and $\ga\de>0$, which entails, in particular $\s<0$. 

3. Then one may ask under which additional constraints are the quantities $h_\la(q,t;\alde)$ strictly positive (or nonnegative) for all $\la\in\Y$. Note that under the above conditions, all factors in \eqref{eq6.N1} are strictly positive, so the question is reduced to the analysis of the expression \eqref{eq6.N2}. It is interesting that the form of the answer depends on whether the number $\tau:=\log_q t$ is rational or not. 
\end{remark}

\section{Limit Markov dynamics}\label{sect7}

\subsection{Generalities about stochastic links}

Suppose that $E$ and $E'$ are two measurable (= Borel) spaces, $x$ ranges over $E$, and $A\subset E'$ is an arbitrary measurable subset. Recall that a \emph{Markov kernel} relating $E$ to $E'$  is a function of two arguments $\La=\La(x,A)$ such that $\La(x,\ccdot)$ is a probability measure on $E'$ for each $x$, while $\La(\ccdot,A)$ is a measurable function on $E$ for each $A$. The kernel $\La$ induces a natural map $\Prob(E)\to \Prob(E')$, where $\Prob(\cdot)$ is our notation for the set of probability measures on a given measurable space; we denote this map as $M\mapsto M\La$. Dually, $\La$ also induces a linear map $\B(E')\to \B(E)$ of the spaces of bounded measurable functions. We denote this map by $f\mapsto \La f$; it has norm $1$ (here we assume that $\B(\cdot)$ is equipped with the supremum norm).  For more detail, see, e.g. Meyer \cite[ch. IX]{Meyer}.

Adopting the terminology of \cite{BO-2012} we call $\La$ a \emph{stochastic link} between $E$ and $E'$ and denote it as $\La:E\dasharrow E'$. If $\La': E'\dasharrow E''$ is another stochastic link, then their composition  is a stochastic link $\La\circ\La': E\dasharrow E''$. 

If $E'$ is a countable set, then $\La: E\dasharrow E'$ can be viewed simply as a function $\La(x,y)$ on $E\times E'$; if $E$ is also countable, then $\La$ is a stochastic matrix of the format $E\times E'$. 

Assume additionally that both $E$ and $E'$ are compact topological spaces with their canonical Borel structures. Then we say that $\La:E\dasharrow E'$ is \emph{Feller} if the corresponding linear map $\B( E')\to \B(E)$ sends $C(E)$ to $C(E)$; recall that $C(\cdot)$ is our notation for the space of continuous functions. 

\subsection{Stochastic links between the spaces $\wt\Om_N(q,t;\al,\be)$}

As above, we fix a quadruple of parameters $(q,t;\al,\be)$ such that $q,t\in(0,1)$ and $\be<0<\al$. Below we state a few results from \cite{Ols-2021a}. In that paper, they are in fact established for some larger sets of configurations $X$, without imposing the restriction $X\subset[\be^{-1}q, \al^{-1}q]$, which enters our definition of $\wt\Om_N(q,t;\al,\be)$ and $\wt\Om(q,t;\al,\be)$. However, taking into account this restriction is easy.  

\begin{proposition}\label{prop7.A} 
For each $N\in\Z_{\ge1}$, there exists a unique Feller stochastic link (depending on all four parameters),
$$
\wt\La^{N+1}_N: \wt\Om_{N+1}(q,t;\al,\be)\dasharrow \wt\Om_N(q,t;\al,\be),
$$
whose action on the Macdonald polynomials is given by 
\begin{equation}\label{eq7.G}
\wt\La^{N+1}_N\, \dfrac{P_{\nu\mid N}(-;q,t)}{(t^N;q,t)_\nu} =\dfrac{P_{\nu\mid N+1}(-;q,t)}{(t^{N+1};q,t)_\nu} , \qquad \forall \nu\in\Y(N).
\end{equation}
\end{proposition}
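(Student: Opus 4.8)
The plan is to exploit the standard dictionary between Feller stochastic links and positive operators on spaces of continuous functions. A Feller link $\wt\La^{N+1}_N:\wt\Om_{N+1}(q,t;\al,\be)\dasharrow \wt\Om_N(q,t;\al,\be)$ is the same datum as a positive, unital, contractive linear map $T:C(\wt\Om_N(q,t;\al,\be))\to C(\wt\Om_{N+1}(q,t;\al,\be))$: given such a $T$, for each $X\in\wt\Om_{N+1}(q,t;\al,\be)$ the functional $f\mapsto (Tf)(X)$ is positive and sends $1$ to $1$, hence by the Riesz--Markov theorem it is integration against a probability measure $\wt\La^{N+1}_N(X,\ccdot)$ on $\wt\Om_N(q,t;\al,\be)$, and the Feller property is automatic because $T$ lands in the space of continuous functions. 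Thus it suffices to produce a $T$ that is positive, unital, contractive, and satisfies \eqref{eq7.G}; uniqueness of the link reduces to uniqueness of $T$.

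Uniqueness I would settle first, since it is elementary. The renormalized Macdonald polynomials $P_{\nu\mid N}(-;q,t)/(t^N;q,t)_\nu$, $\nu\in\Y(N)$, are nonzero scalar multiples of a basis of $\Sym(N)$: indeed each factor $1-t^{N+1-i}q^{j-1}$ of $(t^N;q,t)_\nu$ is strictly positive for a box $(i,j)$ of a diagram $\nu$ of length at most $N$, because then $N+1-i\ge1$ forces $t^{N+1-i}q^{j-1}\le t<1$. Hence these functions span $\Sym(N)$, which by Lemma \ref{lemma4.B} is dense in $C(\wt\Om_N(q,t;\al,\be))$. Two Feller links obeying \eqref{eq7.G} induce operators $T$ of norm one that agree on this dense subspace, so they coincide on all of $C(\wt\Om_N(q,t;\al,\be))$; equivalently their kernels agree, which gives uniqueness.

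For existence I would start by defining $T$ on the dense subalgebra $\Sym(N)$ through \eqref{eq7.G}, which makes sense because $\{P_{\nu\mid N}(-;q,t):\nu\in\Y(N)\}$ is a basis and the right-hand sides lie in $\Sym(N+1)\subset C(\wt\Om_{N+1}(q,t;\al,\be))$. The serious content is to show that this partially defined map is positive and contractive, so that it extends to a genuine Markov kernel; this is exactly the statement established in \cite{Ols-2021a}, where the link carrying \eqref{eq7.G} is constructed on a larger space of configurations not confined to the interval $[\be^{-1}q,\al^{-1}q]$. The remaining, and easier, task is to check compatibility with the bounding-interval constraint: one must verify that for a configuration $X\in\wt\Om_{N+1}(q,t;\al,\be)$ (all particles in $[\be^{-1}q,\al^{-1}q]$) the measure $\wt\La^{N+1}_N(X,\ccdot)$ furnished by \cite{Ols-2021a} is supported on configurations again lying in $[\be^{-1}q,\al^{-1}q]$, that is, on $\wt\Om_N(q,t;\al,\be)$. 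Granting this, $T$ restricts to a positive unital contraction between the function spaces attached to the smaller configuration spaces, as required.

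The main obstacle is the positivity of $T$, which is precisely the point that cannot be read off from the algebraic relation \eqref{eq7.G} alone and that forces the appeal to \cite{Ols-2021a}: positivity encodes a nonnegativity (interlacing/branching) property of the kernel's transition probabilities, ultimately rooted in the nonnegativity of the Macdonald branching coefficients for $q,t\in(0,1)$. Once positivity is in hand, contractivity and the Feller property come for free (a unital positive map between spaces of continuous functions is automatically a contraction), and the verification of the support condition is a direct estimate using that each coordinate of $X$ lies in $[\be^{-1}q,\al^{-1}q]$.
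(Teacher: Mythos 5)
Your proposal is correct and follows essentially the same route as the paper: the paper likewise defers the substantive construction (positivity of the kernel satisfying \eqref{eq7.G}) to \cite{Ols-2021a}, first on the discrete sets $\Om_N(q,t;\al,\be)$ and then extended to the compactifications, and notes that accounting for the restriction of configurations to $[\be^{-1}q,\al^{-1}q]$ is easy. Your added details --- the Riesz--Markov dictionary between Feller links and positive unital contractions, the positivity of the factors of $(t^N;q,t)_\nu$ for $\nu\in\Y(N)$, and the uniqueness via density of $\Sym(N)$ in $C(\wt\Om_N(q,t;\al,\be))$ (Lemma \ref{lemma4.B}) --- are all sound and consistent with what the paper leaves implicit.
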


The construction of the links $\wt\La^{N+1}_N$ goes in two steps. First, one constructs a link $\La^{N+1}_N: \Om_{N+1}(q,t;\al,\be)\dasharrow \Om_N(q,t;\al,\be)$ (see \cite[section 5]{Ols-2021a}) and then one proves that it can be extended to the compactifications $\wt\Om_{N+1}(q,t;\al,\be)$ and $\wt\Om_N(q,t;\al,\be)$ (see \cite[section 6]{Ols-2021a}).

The two propositions below are contained in \cite[Theorem 8.1]{Ols-2021a}.

\begin{proposition}\label{prop7.B} 
There exist stochastic links 
$$
\wt\La^\infty_N: \wt\Om(q,t;\al,\be)\dasharrow \wt\Om_N(q,t;\al,\be), \quad N\in\Z_{\ge1},
$$
such that 
$$
\wt\La^\infty_{N+1}\circ\wt\La^{N+1}_N=\wt\La^\infty_N, \quad N\in\Z_{\ge1}.
$$
They are uniquely characterized by the property that
\begin{equation}\label{eq7.I}
\wt\La^\infty_N\, \dfrac{P_{\nu\mid N}(-;q,t)}{(t^N;q,t)_\nu} =P_\nu(-;q,t) , \qquad \forall \nu\in\Y(N).
\end{equation}
\end{proposition}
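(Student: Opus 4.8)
The plan is to read the link off formula \eqref{eq7.I} on the dense subalgebra $\Sym(N)$ and then promote the resulting linear map to a genuine Markov kernel. A Feller stochastic link $\wt\La^\infty_N$ is the same datum as the positive, unital, norm-one operator $C(\wt\Om_N(q,t;\al,\be))\to C(\wt\Om(q,t;\al,\be))$ it induces. By Lemma \ref{lemma4.B}, $\Sym(N)$ is dense in $C(\wt\Om_N(q,t;\al,\be))$, and the renormalized Macdonald polynomials $\{P_{\nu\mid N}(-;q,t)/(t^N;q,t)_\nu:\nu\in\Y(N)\}$ form a basis of $\Sym(N)$. Since a bounded operator is determined by its restriction to a dense set, \eqref{eq7.I} pins $\wt\La^\infty_N$ down, and the coherence relation then determines the whole family. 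This disposes of uniqueness.

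For existence I would first define a linear map $\La^\infty_N:\Sym(N)\to\Sym$ by extending \eqref{eq7.I} by linearity. Unitality is immediate from $P_{\varnothing\mid N}/(t^N;q,t)_\varnothing=1\mapsto P_\varnothing=1$, and coherence on this dense subspace is forced: by \eqref{eq7.G} the renormalized Macdonald polynomials are stable under the links of Proposition \ref{prop7.A}, so $\La^\infty_{N+1}\circ\wt\La^{N+1}_N$ and $\La^\infty_N$ agree on the basis. The real content is \emph{positivity}: that $\La^\infty_N$ sends a function nonnegative on $\wt\Om_N(q,t;\al,\be)$ to a function nonnegative on $\wt\Om(q,t;\al,\be)$. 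Granting this, positivity together with unitality makes $\La^\infty_N$ a contraction on the subalgebra $\Sym(N)$, so it extends to a positive unital contraction $C(\wt\Om_N(q,t;\al,\be))\to C(\wt\Om(q,t;\al,\be))$; the Riesz representation theorem then converts its adjoint into the desired probability kernels $\wt\La^\infty_N(\om,\ccdot)$, and the Feller property holds because the image already lies in $C(\wt\Om(q,t;\al,\be))$.

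To prove positivity I would approximate by the finite links. For $M>N$, set $\wt\La^M_N:=\wt\La^{N+1}_N\circ\dots\circ\wt\La^M_{M-1}$, a genuine Feller Markov kernel from $\wt\Om_M(q,t;\al,\be)$ to $\wt\Om_N(q,t;\al,\be)$; iterating \eqref{eq7.G} gives
\[
\wt\La^M_N\,\frac{P_{\nu\mid N}(-;q,t)}{(t^N;q,t)_\nu}=\frac{P_{\nu\mid M}(-;q,t)}{(t^M;q,t)_\nu},\qquad \nu\in\Y(N).
\]
Fix $g=\sum_\nu c_\nu P_{\nu\mid N}(-;q,t)/(t^N;q,t)_\nu$ nonnegative on $\wt\Om_N(q,t;\al,\be)$ and a point $\om\in\wt\Om(q,t;\al,\be)$, and choose $\om^{(M)}\in\wt\Om_M(q,t;\al,\be)$ with $\om^{(M)}\to\om$, which is possible since the union of the $\wt\Om_M(q,t;\al,\be)$ is dense. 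Under the identification $P_{\nu\mid M}=P_\nu\big|_{\wt\Om_M(q,t;\al,\be)}$ we obtain
\[
(\wt\La^M_N g)(\om^{(M)})=\sum_\nu c_\nu\,\frac{P_\nu(\om^{(M)})}{(t^M;q,t)_\nu},
\]
and, since $(t^M;q,t)_\nu\to1$ and each $P_\nu$ is continuous on $\wt\Om(q,t;\al,\be)$, this tends to $\sum_\nu c_\nu P_\nu(\om)=(\La^\infty_N g)(\om)$ as $M\to\infty$. Each left-hand value is $\ge0$ because $\wt\La^M_N$ is a Markov kernel and $g\ge0$, so the limit $(\La^\infty_N g)(\om)$ is $\ge0$; as $\om$ was arbitrary, $\La^\infty_N g\ge0$.

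The main obstacle is exactly this positivity step, and the trap to avoid is to assume that $\La^\infty_N g$ restricted to $\wt\Om_M(q,t;\al,\be)$ equals $\wt\La^M_N g$: it does \emph{not}, the two differing by the $\nu$-dependent factors $(t^M;q,t)_\nu$. The whole point is to carry these factors through the limit, exploiting $t\in(0,1)$ (so $(t^M;q,t)_\nu\to1$) together with the stability $P_{\nu\mid M}\to P_\nu$ of Proposition \ref{prop6.B}, so that nonnegativity is recovered only in the limit $M\to\infty$. Once positivity is secured, the promotion to an honest Feller kernel and the check that $(\La^\infty_N g)(\om)$ is independent of the approximating sequence $\om^{(M)}$ are routine.
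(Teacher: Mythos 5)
Your argument is correct, and it is worth noting that the paper itself does not prove this proposition at all: it is quoted from \cite[Theorem 8.1]{Ols-2021a}, so you are supplying a self-contained construction where the text merely gives a citation. Your route is the natural one and, as far as I can tell, complete: uniqueness from density of $\Sym(N)$ in $C(\wt\Om_N(q,t;\al,\be))$; definition of the prospective operator on the basis of renormalized Macdonald polynomials; and the one nontrivial point, positivity, obtained by writing $(\La^\infty_N g)(\om)$ as the limit of $(\wt\La^M_N g)(\om^{(M)})\ge 0$ along approximations $\om^{(M)}\in\wt\Om_M(q,t;\al,\be)$ of $\om$, using that $P_{\nu\mid M}$ is the restriction of $P_\nu$ to $\wt\Om_M(q,t;\al,\be)$, that $P_\nu$ is continuous on $\wt\Om(q,t;\al,\be)$ (Lemma \ref{lemma4.A}), and that $(t^M;q,t)_\nu\to1$. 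You are also right to flag, and to avoid, the trap of pretending $\La^\infty_N g$ restricted to level $M$ equals $\wt\La^M_N g$. Once positivity and unitality are in hand, the contraction property, the Riesz-representation passage to an honest kernel, the Feller property, and coherence all follow as you say. Two cosmetic remarks: with the paper's composition convention (in which $\La\circ\La':E\dasharrow E''$ has $\La$ applied first), your iterated link should be written $\wt\La^M_{M-1}\circ\cdots\circ\wt\La^{N+1}_N$; and the independence of the limit from the choice of $\om^{(M)}$ is automatic here, since the limit is identified with the directly defined value $\sum_\nu c_\nu P_\nu(\om)$.
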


The next proposition uses the notion of \emph{boundary} for an infinite chain of spaces connected by stochastic links, see \cite[Definition 7.1]{Ols-2021a} and the explanation below.

\begin{proposition}\label{prop7.C}
The space $\wt\Om(q,t;\al,\be)$ is the boundary of the chain  $\{\wt\Om_N(q,t;\al,\be), \wt\La^{N+1}_N\}$.
\end{proposition}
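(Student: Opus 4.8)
The plan is to verify directly the defining property of the boundary. Recall that, by \cite[Definition 7.1]{Ols-2021a}, to say that $\wt\Om(q,t;\al,\be)$ --- equipped with the links $\wt\La^\infty_N$ of Proposition \ref{prop7.B} --- is the boundary of the chain $\{\wt\Om_N(q,t;\al,\be),\wt\La^{N+1}_N\}$ means precisely that the affine map
\begin{equation*}
\Prob(\wt\Om(q,t;\al,\be))\to\varprojlim_N\Prob(\wt\Om_N(q,t;\al,\be)), \qquad M_\infty\mapsto (M_\infty\wt\La^\infty_N)_{N\ge1},
\end{equation*}
is a bijection onto the set of \emph{coherent systems}, i.e. sequences $(M_N)$ with $M_N\in\Prob(\wt\Om_N(q,t;\al,\be))$ and $M_{N+1}\wt\La^{N+1}_N=M_N$ for all $N$. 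That the map lands in coherent systems is immediate from the cocycle relation $\wt\La^\infty_{N+1}\circ\wt\La^{N+1}_N=\wt\La^\infty_N$ of Proposition \ref{prop7.B}. Since each $\wt\La^\infty_N$ is Feller, the map is weak-$*$ continuous; as its source is weak-$*$ compact and its target is a closed subset of a compact product, it suffices to prove bijectivity, and the homeomorphism property then follows automatically.

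\textbf{Injectivity.} First I would show that a boundary measure is recovered from its image. Put $M_N:=M_\infty\wt\La^\infty_N$. Using the adjunction $\langle\wt\La^\infty_N g,M_\infty\rangle=\langle g,M_N\rangle$ together with \eqref{eq7.I}, for every $\nu\in\Y$ and every $N\ge\ell(\nu)$ one gets
\begin{equation*}
\langle P_\nu(-;q,t),\,M_\infty\rangle=\left\langle \frac{P_{\nu\mid N}(-;q,t)}{(t^N;q,t)_\nu},\,M_N\right\rangle.
\end{equation*}
Thus all the $P_\nu$-moments of $M_\infty$ are determined by the family $(M_N)$; since the Macdonald symmetric functions $P_\nu(-;q,t)$ span a dense subspace of $C(\wt\Om(q,t;\al,\be))$ by Lemma \ref{lemma4.A}(iii), the measure $M_\infty$ is determined uniquely, which gives injectivity.

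\textbf{Surjectivity.} Given a coherent system $(M_N)$, I would produce $M_\infty$ by a weak-$*$ compactness argument, which has the advantage that positivity and normalization come for free. View each $M_N$ as a measure on the ambient compact space $\wt\Om(q,t;\al,\be)$ via the inclusion $\wt\Om_N(q,t;\al,\be)\subset\wt\Om(q,t;\al,\be)$, and let $M_\infty$ be any weak-$*$ limit point. For $\nu\in\Y$ and $N\ge\ell(\nu)$ the restriction of the symmetric function $P_\nu(-;q,t)$ to $\wt\Om_N(q,t;\al,\be)$ equals $P_{\nu\mid N}(-;q,t)$, so
\begin{equation*}
\langle P_\nu(-;q,t),\,M_N\rangle=(t^N;q,t)_\nu\left\langle\frac{P_{\nu\mid N}(-;q,t)}{(t^N;q,t)_\nu},\,M_N\right\rangle.
\end{equation*}
By coherence and \eqref{eq7.G} the bracket on the right is independent of $N$ (call it $L(P_\nu)$), while $(t^N;q,t)_\nu\to1$; hence $\langle P_\nu,M_N\rangle\to L(P_\nu)$ along the full sequence. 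Consequently every weak-$*$ limit point has the same $P_\nu$-moments, so by density (Lemma \ref{lemma4.A}) the limit point is unique: $M_\infty$ exists and $\langle P_\nu,M_\infty\rangle=L(P_\nu)$. Finally, to check $M_\infty\wt\La^\infty_N=M_N$ I would test against the renormalized Macdonald polynomials: by the adjunction and \eqref{eq7.I},
\begin{equation*}
\left\langle\frac{P_{\nu\mid N}(-;q,t)}{(t^N;q,t)_\nu},\,M_\infty\wt\La^\infty_N\right\rangle=\langle P_\nu(-;q,t),M_\infty\rangle=L(P_\nu)=\left\langle\frac{P_{\nu\mid N}(-;q,t)}{(t^N;q,t)_\nu},\,M_N\right\rangle,
\end{equation*}
and since the renormalized $P_{\nu\mid N}$ with $\ell(\nu)\le N$ span a dense subspace of $C(\wt\Om_N(q,t;\al,\be))$ by Lemma \ref{lemma4.B}, the two measures coincide.

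The main obstacle is surjectivity --- manufacturing a genuine probability measure on the boundary out of an abstract coherent family. Trying to extend the linear functional $L$ on $\Sym$ directly would force one to prove its positivity and sup-norm continuity by hand, which is awkward; the device of taking $M_\infty$ as a weak-$*$ limit of the $M_N$ (legitimate because all the $\wt\Om_N(q,t;\al,\be)$ sit inside the single compact space $\wt\Om(q,t;\al,\be)$) bypasses this entirely, leaving only the bookkeeping of the restriction maps and of the Pochhammer normalizations $(t^N;q,t)_\nu$. All of the above is the content of \cite[Theorem 8.1]{Ols-2021a}, to which the statement may simply be attributed.
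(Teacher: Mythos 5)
Your argument is correct, but it is worth pointing out that the paper itself gives no proof of this proposition: it is attributed, together with Proposition \ref{prop7.B}, to \cite[Theorem 8.1]{Ols-2021a} --- exactly the fallback you offer in your last sentence. So where the paper cites, you actually derive. Granting Proposition \ref{prop7.B} (existence of the links $\wt\La^\infty_N$ with the cocycle relation and property \eqref{eq7.I}) and the density statements of Lemmas \ref{lemma4.A} and \ref{lemma4.B}, your moment-method argument does establish the bijection $\Prob(\wt\Om(q,t;\al,\be))\leftrightarrow\varprojlim\Prob(\wt\Om_N(q,t;\al,\be))$, which is precisely how the paper glosses the word ``boundary''. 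Both pivots are sound: injectivity because the $P_\nu$-moments of $M_\infty$ are read off from any single $M_N$ via the adjunction and \eqref{eq7.I}, and a dense subspace of $C$ of a compact metrizable space separates probability measures; surjectivity because all the $\wt\Om_N(q,t;\al,\be)$ sit inside the one compact space $\wt\Om(q,t;\al,\be)$, so weak-$*$ compactness furnishes a limit point whose moments are pinned down by the $N$-independence of the renormalized brackets (coherence plus \eqref{eq7.G}) together with $(t^N;q,t)_\nu\to1$. This is a real simplification over trying to extend the functional $L$ on $\Sym$ by hand, where positivity would be the sticking point. Two caveats: your proof is genuinely conditional on Proposition \ref{prop7.B}, which comes from the same external theorem, so it does not make the section self-contained; and you should confirm that \cite[Definition 7.1]{Ols-2021a} of ``boundary'' demands only the bijection described in the paper's explanatory paragraph (as stated there, it does, and then your verification matches it exactly).
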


This means that there is a bijection 
$$
\Prob(\wt\Om(q,t;\al,\be))\leftrightarrow \varprojlim \Prob(\wt\Om_N(q,t;\al,\be)), \qquad M\leftrightarrow \{M_N: N\in\Z_{\ge1}\},
$$
between probability measures $M\in\Prob(\wt\Om_N(q,t;\al,\be))$ and \emph{coherent systems} of measures $\{M_N\in\Prob(\wt\Om_N(q,t;\al,\be))\}$. Here `coherent' means in turn that $M_{N+1}\wt\La^{N+1}_N=M_N$ for each $N\in\Z_{\ge1}$. The coherent systems are thus elements of the projective limit space $\varprojlim \Prob(\wt\Om_N(q,t;\al,\be))$. The bijection  is determined by the relations $M_N=M\wt\La^\infty_N$.

\begin{corollary}\label{cor7.A}
The stochastic links $\wt\La^{N+1}_N$ and $\wt\La^\infty_N$ are Feller links in the sense that the corresponding operators preserve continuous functions.
\end{corollary}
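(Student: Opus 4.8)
The plan is to reduce both claims to a single soft observation: the operator $f\mapsto\wt\La f$ attached to any stochastic link is a contraction for the supremum norm (it has norm $1$, as recalled in the generalities on stochastic links at the start of this section), and a sup-norm contraction that maps some \emph{dense} subspace of its source into continuous functions automatically maps the whole source into continuous functions. Indeed, if $f_n\to f$ uniformly and each $\wt\La f_n$ is continuous, then $\wt\La f_n\to\wt\La f$ uniformly, and a uniform limit of continuous functions is continuous. So in each case I only need to exhibit a dense subspace of the source on which the link is already known to take continuous values.

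For both links the source of the associated operator is $C(\wt\Om_N(q,t;\al,\be))$, and in this Banach space the subalgebra $\Sym(N)$ of symmetric polynomials is dense by Lemma~\ref{lemma4.B}. The renormalized Macdonald polynomials $P_{\nu\mid N}(-;q,t)/(t^N;q,t)_\nu$, with $\nu\in\Y(N)$, form a linear basis of $\Sym(N)$; hence, by linearity, the action of each link on $\Sym(N)$ is entirely determined by its prescribed action on these basis vectors, namely \eqref{eq7.G} for $\wt\La^{N+1}_N$ and \eqref{eq7.I} for $\wt\La^\infty_N$.

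It then remains to locate the images. By \eqref{eq7.G}, the link $\wt\La^{N+1}_N$ carries each basis vector to a renormalized Macdonald polynomial in $\Sym(N+1)$, hence into $\Sym(N+1)\subset C(\wt\Om_{N+1}(q,t;\al,\be))$, whose members are continuous by Lemma~\ref{lemma4.B}; this recovers the Feller property already folded into Proposition~\ref{prop7.A}. By \eqref{eq7.I}, the link $\wt\La^\infty_N$ carries each basis vector to the Macdonald symmetric \emph{function} $P_\nu(-;q,t)\in\Sym$, which is continuous on $\wt\Om(q,t;\al,\be)$ by Lemma~\ref{lemma4.A}. In either case the link sends the dense subspace $\Sym(N)$ into continuous functions, so by the contraction principle of the first paragraph it sends all of $C(\wt\Om_N(q,t;\al,\be))$ into continuous functions, which is precisely the asserted Feller property.

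I do not anticipate a genuine obstacle here: the argument is entirely formal once the action on Macdonald polynomials (Propositions~\ref{prop7.A} and~\ref{prop7.B}) and the density of symmetric polynomials and symmetric functions (Lemmas~\ref{lemma4.A} and~\ref{lemma4.B}) are in hand. The only point I would pause over is the compatibility between the measure-theoretic operator $f\mapsto\wt\La f$ on bounded functions and the algebraic identities \eqref{eq7.G}, \eqref{eq7.I}; but this compatibility is exactly the characterization by which the links were defined in Propositions~\ref{prop7.A} and~\ref{prop7.B}, so no separate verification is needed.
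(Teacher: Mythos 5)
Your proof is correct and takes essentially the same route as the paper's: both combine the sup-norm contraction property of the operators induced by stochastic links with the density of the (renormalized) Macdonald polynomials in $C(\wt\Om_N(q,t;\al,\be))$ and of the Macdonald symmetric functions in $C(\wt\Om(q,t;\al,\be))$, together with the explicit formulas \eqref{eq7.G} and \eqref{eq7.I}. The only difference is that you spell out the uniform-limit argument that the paper leaves implicit.
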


\begin{proof}
This follows from \eqref{eq7.G} and \eqref{eq7.I} combined with the following two facts. First, by the very definition, stochastic links induce contraction operators with respect to the supremum norm. Second, the Macdonald polynomials $P_{\nu\mid N}(-;q,t)$ span a dense subspace of $C(\wt\Om_N(q,t;\al,\be))$, while the Macdonald symmetric functions $P_\nu(-;q,t)$ span a dense subspace of $C(\wt\Om(q,t;\al,\be))$. 
\end{proof}

\subsection{Markov dynamics on $\wt\Om(q,t;\al,\be)$}

In addition to the quadruple $(q,t;\al,\be)$ we fix a pair $(\ga,\de)$ of parameters such that $\ga=\bar\de\in\C\setminus\R$. Recall that the notion of Feller semigroups was defined in section \ref{sect2.3}. 

Given $\la\in\Y$, we set
\begin{equation}\label{eq7.A1}
\m_\la(q,t;\alde):=-\, \sum_{i=1}^\infty\left(\frac{\ga\de q}{\al\be} t^{-i+1}(q^{\la_i}-1)+t^{\la_i-1}(q^{-\la_i}-1)\right).
\end{equation}
The sum is actually finite, because $q^{\la_i}-1=q^{-\la_i}-1=0$ for $i$ large enough.

As before, we treat $\Sym$ as a dense subspace of the real Banach space $C(\wt\Om(q,t;\al,\de))$, so that symmetric functions are viewed as continuous functions on the compact space $\wt\Om(q,t;\al,\de)$. 

\begin{theorem}\label{thm7.A}
{\rm(i)} On the space $C(\wt\Om(q,t;\al,\de))$, there exists a Feller semigroup $\{T^{q,t;\alde}_\infty(s)\}$, $s\ge0$,
uniquely determined by the property that the action of the semigroup operators on the big $q$-Jacobi symmetric functions is given by 
\begin{equation}\label{eq7.A}
T^{q,t;\alde}_\infty(s) \Phi_\la(-; q,t;\alde)= \exp(s\cdot\m_\la(q,t;\alde))\Phi_\la(-; q,t;\alde), 
\end{equation}
where $s\ge0$ and $\la\in\Y$.

{\rm(ii)} The measure $\MM_\infty^{q,t;\alde}$ on $\wt\Om(q,t;\al,\de)$, introduced in Proposition \ref{prop6.D}, is a stationary measure for $\{T^{q,t;\alde}_\infty(s)\}$, and it is a unique probability measure with this property. 
\end{theorem}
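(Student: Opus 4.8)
<br>

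The plan is to build the semigroup on $C(\wt\Om(q,t;\al,\be))$ by transporting the finite-$N$ Feller semigroups $\{T_N(s)\}$ furnished by Theorem \ref{thm5.A} along the stochastic links of Proposition \ref{prop7.B}, following the intertwining method of \cite{BO-2012}. First I would specialize the parameters according to \eqref{eq6.L}, writing $(\abcd)=(\al,\be;\ga t^{1-N},\de t^{1-N})$, so that Theorem \ref{thm5.A} supplies for each $N$ a Feller semigroup $\{T_N(s)\}$ on $C(\wt\Om_N(q,t;\al,\be))$ whose pre-generator $D_N$ is diagonalized by the renormalized big $q$-Jacobi polynomials with eigenvalues $\m_{\la\mid N}$ from \eqref{eq5.M}. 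The crucial algebraic input is a commutation (intertwining) relation between the operators $D_N$ and the links $\wt\La^{N+1}_N$. Using the diagonal actions \eqref{eq7.G} and \eqref{eq6.H}, together with the stability of the coefficients $\pi(\la,\nu;q,t;\alde)$, I would verify that $\wt\La^{N+1}_N$ intertwines $D_{N+1}$ with $D_N$, and likewise that $\wt\La^\infty_N$ intertwines the candidate limit operator $\D_\infty$ (acting diagonally in the basis $\Phi_\la$ via \eqref{eq7.A1}) with $D_N$; here one checks the eigenvalue match $\lim_N\m_{\la\mid N}(q,t;\al,\be;\ga t^{1-N},\de t^{1-N})=\m_\la(q,t;\alde)$, which is immediate from comparing \eqref{eq5.M} and \eqref{eq7.A1}.

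Granting the intertwining at the level of generators, the semigroups intertwine as well: $\wt\La^{N+1}_N\,T_{N+1}(s)=T_N(s)\,\wt\La^{N+1}_N$. The plan for part (i) is then to define, for $f$ in the dense subspace $\Sym\subset C(\wt\Om(q,t;\al,\be))$, the operator $T_\infty(s)f$ as the limit of $\wt\La^\infty_N\,T_N(s)\,f_N$, where $f_N\in\Sym(N)$ is the image of $f$ under the projection $\Sym\to\Sym(N)$; on each big $q$-Jacobi symmetric function $\Phi_\la$ this limit is manifestly $\exp(s\m_\la)\Phi_\la$, because $T_N(s)$ acts on the renormalized $\varphi_{\la\mid N}$ by the scalar $\exp(s\m_{\la\mid N})$ and $\varphi_{\la\mid N}\to\Phi_\la$ by Proposition \ref{prop6.B}. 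Since each $T_N(s)$ is a contraction and the links are contractions (Feller links, by Corollary \ref{cor7.A}), the operators $T_\infty(s)$ defined on the span of the $\Phi_\la$ extend to genuine contractions on $C(\wt\Om(q,t;\al,\be))$. Positivity and conservativity are inherited from the finite-$N$ level: each $T_N(s)$ preserves nonnegative functions and fixes $1$, the links are Markov kernels (hence positive and unital), and these properties pass to the limit. To identify the generator as the closure of $\D_\infty$, I would invoke the Hille--Yosida/pre-generator criterion of section \ref{sect2.4}, using that $\D_\infty$ is densely defined and that $\Ran(r-\D_\infty)\supseteq\mathrm{span}\{\Phi_\la\}$ is dense for every $r>0$ (the operator $r-\D_\infty$ is triangular with nonzero diagonal entries $r-\m_\la$ in the basis $\Phi_\la$), while the positive maximum principle is guaranteed by the approximation from the Feller semigroups $T_N$; uniqueness follows since a Feller semigroup is determined by its generator.

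For part (ii), the stationarity of $\MM_\infty^{q,t;\alde}$ amounts to $\langle T_\infty(s)f,\MM_\infty^{q,t;\alde}\rangle=\langle f,\MM_\infty^{q,t;\alde}\rangle$ for all $f$, which I would test on the basis $\{\Phi_\la\}$. By \eqref{eq7.A} and the orthogonality relations \eqref{eq6.S}, $\langle T_\infty(s)\Phi_\la,\MM_\infty^{q,t;\alde}\rangle=\exp(s\m_\la)\langle\Phi_\la,\MM_\infty^{q,t;\alde}\rangle$, which equals $0$ for $\la\ne\varnothing$ and $1$ for $\la=\varnothing$, matching $\langle\Phi_\la,\MM_\infty^{q,t;\alde}\rangle$ exactly; hence $\MM_\infty^{q,t;\alde}$ is stationary. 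For uniqueness, suppose $M$ is any stationary probability measure. Stationarity forces $\langle T_\infty(s)\Phi_\la,M\rangle=\langle\Phi_\la,M\rangle$, so $\exp(s\m_\la)\langle\Phi_\la,M\rangle=\langle\Phi_\la,M\rangle$ for all $s\ge0$; since $\m_\la<0$ for every $\la\ne\varnothing$ (as the two summands in \eqref{eq7.A1} are strictly positive when $\la\ne\varnothing$, using $\ga\de/(\al\be)<0$ and the constraints on the parameters), this forces $\langle\Phi_\la,M\rangle=0$ for all $\la\ne\varnothing$. Together with $\langle 1,M\rangle=1$, Remark \ref{rem6.A} then identifies $M=\MM_\infty^{q,t;\alde}$.

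The main obstacle I expect is the rigorous verification of the intertwining relation at the operator level and the attendant passage to the limit, namely showing that $\wt\La^\infty_N D_N=\D_\infty\wt\La^\infty_N$ holds on the relevant domains and that the approximation $\wt\La^\infty_N T_N(s)f_N\to T_\infty(s)f$ is uniform in the supremum norm (not merely coefficientwise in $\Sym$). This requires controlling the interaction between the degree filtration, the stability in $N$, and the sup-norm topology coming from the compact spaces $\wt\Om_N(q,t;\al,\be)$; the formula \eqref{eq2.A} reconstructing $T_N(s)$ from its Yosida approximants $A_r=rA/(r-A)$ is the natural tool, since $A_r$ is bounded and commutes with the links whenever $A$ does, letting one push the intertwining through the exponential and then take $r\to\infty$. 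Everything else reduces to the finite-$N$ Feller property already secured in Theorem \ref{thm5.A} and to the diagonalizations recorded in Section \ref{sect6}.
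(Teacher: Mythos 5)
Your proposal follows essentially the same route as the paper: the intertwining method of \cite{BO-2012} applied to the finite-$N$ Feller semigroups of Theorem \ref{thm5.A} (with the shifted parameters \eqref{eq6.L}) and the links of Propositions \ref{prop7.A}--\ref{prop7.B}, and your argument for part (ii) is exactly the paper's. Two points on execution are worth noting. First, the ``main obstacle'' you flag --- sup-norm convergence of $\wt\La^\infty_N T_N(s) f_N$ --- does not arise in the paper, because the links act on the \emph{renormalized} polynomials by exact identities rather than asymptotically: by \eqref{eq7.G}, \eqref{eq7.I} and the $N$-independence of the coefficients in \eqref{eq6.H}, one has $\wt\La^\infty_N\bigl(\varphi_{\la\mid N}(-;q,t;\al,\be;\ga t^{1-N},\de t^{1-N})/(t^N;q,t)_\la\bigr)=\Phi_\la(-;q,t;\alde)$ exactly for every $N$ with $\la\in\Y(N)$, so no limit over $N$ is needed to compute $T_\infty(s)\Phi_\la$. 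Second, the existence, positivity and conservativity of the limit semigroup are not obtained by extending a densely defined contraction from $\operatorname{span}\{\Phi_\la\}$ and arguing that positivity ``passes to the limit''; they come from \cite[Proposition 2.4]{BO-2012}, whose applicability rests on the boundary property of Proposition \ref{prop7.C} --- an ingredient your sketch uses only implicitly but which is essential, since the intertwining relations alone do not produce a Markov semigroup on the limit space. Finally, the paper verifies the intertwining directly at the semigroup level, by first diagonalizing $T_N(s)$ on the renormalized basis via \eqref{eq2.A} restricted to the finite-dimensional $\D_N$-invariant subspaces $\Sym_{\le d}(N)$, rather than at the generator level followed by a passage through the Yosida approximants; both work, but the former avoids the domain issues you anticipate.
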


\begin{proof}
(i) The uniqueness statement is evident, because the big $q$-Jacobi symmetric functions form a basis in $\Sym$, which is dense in $C(\wt\Om(q,t;\al,\de))$. We proceed to the existence statement.

\emph{Step}\/ 1. For $N=1,2,3,\dots$, let $\D_N=\D_N^{q,t;\alde}$ stand for the $q$-difference operator on $\Sym(N)$ introduced in \eqref{eq5.B}, with the varying parameters 
\begin{equation}\label{eq7.B}
(\abcd)= (\al,\be;\ga t^{1-N}, \de t^{1-N}),
\end{equation}
as in \eqref{eq6.L}. By Theorem \ref{thm5.A}, $\D_N$ gives rise to a Feller semigroup $\{T_N^{q,t,\alde}(s)\}$ on the Banach space $C(\wt\Om(q,t;\al,\be))$. Below we use the shorthand notation $\{T_N(s)\}$. 

From \eqref{eq6.F} it follows that the  big $q$-Jacobi symmetric polynomials with the parameters \eqref{eq7.B} are eigenfunctions of $\D_N$. Moreover, comparing the expression \eqref{eq5.M} for the eigenvalues with \eqref{eq7.A1} and taking into account \eqref{eq7.B} we see that the eigenvalues are \emph{stable in} $N$:
\begin{multline}\label{eq7.C}
\D_N\, \varphi_{\la\mid N}(-;q,t; \al,\be;\ga t^{1-N}, \de t^{1-N})\\
=\m_\la(q,t;\alde)  \varphi_{\la\mid N}(-;q,t; \al,\be;\ga t^{1-N}, \de t^{1-N})
\end{multline}
for any $\la\in\Y(N)$.

Next, we claim that 
\begin{multline}\label{eq7.D}
T_N(s) \varphi_{\la\mid N}(-;q,t; \al,\be;\ga t^{1-N}, \de t^{1-N})\\
=\exp(s\cdot \m_\la(q,t;\alde))  \varphi_{\la\mid N}(-;q,t; \al,\be;\ga t^{1-N}, \de t^{1-N}))
\end{multline}
for any $s\ge0$  and $\la\in\Y(N)$. 
This conclusion is not immediate from \eqref{eq7.C}, because the pre-generator $\D_N$ is not bounded, so that we cannot simply write $T_N(s)=\exp(s\cdot\D_N)$. However, we may use instead the limit relation \eqref{eq2.A}. Here we  rely on the evident fact that 
$\D_N$ does not rise degree and hence preserves all finite-dimensional subspaces $\Sym(N)_{\le d}$ (see section \ref{sect6.2}). This allows us to apply the formula \eqref{eq2.A} directly, as the limit can be taken in a suitable invariant finite-dimensional subspace. 

For further use, it is convenient to rewrite \eqref{eq7.D} in the form
\begin{multline}\label{eq7.D1}
T_N(s) \dfrac{\varphi_{\la\mid N}(-;q,t; \al,\be;\ga t^{1-N}, \de t^{1-N})}{(t^N;q,t)_\la)}\\
=\exp(s\cdot \m_\la(q,t;\alde))   \dfrac{\varphi_{\la\mid N}(-;q,t; \al,\be;\ga t^{1-N}, \de t^{1-N})}{(t^N;q,t)_\la)}.
\end{multline}

\emph{Step}\/ 2. We claim that 
\begin{equation}\label{eq7.E}
\wt\La^{N+1}_N \dfrac{\varphi_{\la\mid N}(-;q,t; \al,\be;\ga t^{1-N}, \de t^{1-N})}{(t^N;q,t)_\la}=
\dfrac{\varphi_{\la\mid N+1}(-;q,t; \al,\be;\ga t^{-N}, \de t^{-N})}{(t^{N+1};q,t)_\la}.
\end{equation}
Indeed, the similar relation holds for the (renormalized) Macdonald polynomials, see \eqref{eq7.G}. But \eqref{eq6.H} shows that the transition coefficients between the two systems of polynomials are stable in $N$. This proves \eqref{eq7.E}

\emph{Step}\/ 3. Next, we claim that the semigroups $\{T_N(s)\}$ are compatible with the links $\wt\La^{N+1}_N$ in the sense that the following \emph{intertwining relation} holds:
\begin{equation}\label{eq7.F}
T_{N+1}(s)\circ\wt\La^{N+1}_N =\wt\La^{N+1}_N \circ T_N(s), \qquad s\ge0, \; N=1,2,\dots
\end{equation}
(here both sides are operators $C(\wt\Om_N(q,t;\al,\be))\to C(\wt\Om_{N+1}(q,t;\al,\be))$). 

Indeed, combining \eqref{eq7.E} with \eqref{eq7.D1} we see that the desired relation holds true when applied to the (renormalized) big $q$-Jacobi polynomials with indices $\la\in\Y(N)$, because the quantities $ \m_\la(q,t;\alde)$ do not depend on $N$. Since these polynomials form a basis of the dense subspace $\Sym(N)\subset C(\wt\Om_N(q,t;\al,\be))$ and all operators in \eqref{eq7.F} are bounded, the relation \eqref{eq7.F} holds on the whole space $C(\wt\Om_N(q,t;\al,\be))$. 

\emph{Step}\/ 4. The intertwining relation \eqref{eq7.F} allows us to apply the general formalism described in \cite[Proposition 2.4]{BO-2012}. Because we know that $\wt\Om(q,t;\al,\be)$ is the Feller boundary of the chain $\{\wt\Om_N(q,t;\al,\be), \La^{N+1}_N\}$ (see Proposition \ref{prop7.C}), this formalism says that there exists a unique Feller semigroup $\{T_\infty(s)\}$ on $C(\wt\Om(q,t;\al,\be))$ such that 
\begin{equation}\label{eq7.H}
T_\infty(s)\circ\wt\La^\infty_N=\wt\La^\infty_N\circ T_N(s), \qquad s\ge0, \; N=1,2,\dots\,.
\end{equation}
Here and below we write $\{T_\infty(s)\}$ instead of the more detailed notation $\{T^{q,t;\alde}_\infty(s)\}$.

\emph{Step}\/ 5. It remains to show that  the big $q$-Jacobi symmetric functions $\Phi_\la(-;q,t;\alde)$ are eigenfunctions of the operators $T_\infty(s)$, as stated in \eqref{eq7.A}. We know that the similar claim holds at each level $N$, for the renormalized $N$-variate big $q$-Jacobi polynomials, see \eqref{eq7.D1}. To deduce from this the desired equality \eqref{eq7.A} it suffices to prove that 
\begin{equation}\label{eq7.J}
\wt\La^\infty_N\, \dfrac{\varphi_{\la\mid N}(-;q,t; \al,\be;\ga t^{1-N}, \de t^{1-N})}{(t^N;q,t)_\la}=\Phi_\la(-;q,t;\alde), \qquad \forall \la\in\Y(N).
\end{equation}
Indeed, suppose for a moment that we already know \eqref{eq7.J}. Then, given $\la\in\Y$, we take $N$ so large that $\la\in\Y(N)$, apply both sides of \eqref{eq7.H} to the renormalized polynomial with index $\la$, and use \eqref{eq7.D1}. This gives us \eqref{eq7.A}.

\emph{Step}\/ 6. Finally, to prove \eqref{eq7.J} we observe that a similar relation holds for the renormalized $N$-variate Macdonald polynomials and Macdonald symmetric functions, see \eqref{eq7.I}. To pass from it to \eqref{eq7.J} we use the same argument as in step 2: the stability of the transition coefficients in \eqref{eq6.H}. 

This concludes the proof of (i).

(ii) By virtue of \eqref{eq7.A} and the fact that all eigenvalues $\m_\la(q,t;\alde)$ with $\la\ne\varnothing$ are nonzero, the stationarity  condition for a measure exactly means that it must be orthogonal to all functions $\Phi_\la(-;q,t;\alde)$ with $\la\ne\varnothing$. By virtue of Remark \ref{rem6.A}, $\MM^{q,t;\alde}$ is a unique probability measure satisfying this condition. 
\end{proof}

\subsection{Concluding remarks} 

1. It is easy to see that the operator $\D_\infty$ defined in \eqref{eq1.A} is a pre-generator of the semigroup $\{T_\infty(s)\}$, as stated in Theorem \ref{thm1.A}.

2. As in the context of Theorems \ref{thm3.A} and \ref{thm5.A}, the semigroup $\{T_\infty(s)\}$, being a Feller semigroup, gives rise to a Markov process on the space $\wt\Om(q,t;\al,\be)$. 

3. Our construction of the Feller semigroup $\{T_\infty(s)\}$ out of the Feller semigroups $\{T_N(s)\}$ relies on the general formalism of \cite{BO-2012}. The example from \cite[section 10]{BO-2013} shows that this formalism may sometimes lead to a deterministic process on the boundary of a chain. However, this is not the case in our situation.

4. One can show that the semigroups $\{T_N(s)\}$ approximate the semigroup $\{T_\infty(s)\}$ in the same sense as in \cite[Proposition 8.13]{BO-2013}. 

5. The stationary measures $\MM_\infty^{q,t;\alde}$ are of independent interest. In the special case $t=q$, they are determinantal measures, see \cite{GO} and \cite{CGO} (these papers focus on a different range of the parameters $(\alde)$, but the results are valid in our context, too). 

6. Throughout the paper we assumed that $c=\bar d\in \C\setminus\R$ and likewise $\ga=\bar\de\in\C\setminus\R$. However, the results remain valid for a larger range of parameters.

\bigskip

Krichever Center for Advanced Studies, Skoltech, Moscow,
Russia

HSE University, Moscow, Russia

olsh2007@gmail.com

\end{document}